\DeclareSymbolFontAlphabet{\mathbb}{AMSb} 
\DeclareSymbolFontAlphabet{\mathbbl}{bbold}
\newcommand{\Prism}{{\mathlarger{\mathbbl{\Delta}}}} 
\numberwithin{equation}{section}
\theoremstyle{plain}
\newtheorem{theorem}[equation]{Theorem}
\newtheorem{proposition}[equation]{Proposition}
\newtheorem{lemma}[equation]{Lemma}
\newtheorem{corollary}[equation]{Corollary}
\newtheorem{porism}[equation]{Porism}
\theoremstyle{definition}
\newtheorem{definition}[equation]{Definition}
\newtheorem{notation}[equation]{Notation}
\newtheorem{example}[equation]{Example}
\newtheorem{remark}[equation]{Remark}
\title{Integral \(p\)-adic Hodge filtrations in low dimension and ramification}
\author{Shizhang Li}
\address{Department of Mathematics,University of Michigan, 530 Church Street,
  Ann Arbor, MI 48109}
\email{shizhang@umich.edu}
\begin{document}

\maketitle

\begin{abstract}
Given an integral \(p\)-adic variety, we observe that if the integral
Hodge--de Rham spectral sequence behaves nicely, then the special fiber
knows the Hodge numbers of the generic fiber.
Applying recent advancements of integral \(p\)-adic Hodge theory,
we show that such a nice behavior is guaranteed
if the \(p\)-adic variety can be lifted to an analogue of second Witt vectors
and satisfies some bound on dimension and ramification index.
This is a (ramified) mixed characteristic 
analogue of results due to Deligne--Illusie, Fontaine--Messing, and Kato.
Lastly, we discuss an example illustrating the necessity of
the aforementioned lifting condition, which is of independent interest.
\end{abstract}

\tableofcontents

\section{Introduction}
Given a smooth proper scheme \(\mathcal{X}\) over some \(p\)-adic ring of integers \(\mathcal{O}_K\),
can we tell the Hodge diamond of its generic fiber \(X\) by simply staring at the geometry of the special fiber \(\mathcal{X}_0\)?
In general, there is no hope of this being true.
But surely if one puts some constraints, this will be true.

There are two typical pathological phenomenons concerning Hodge and de Rham cohomology groups in an integral \(p\)-adic situation:
one being torsions in the cohomology groups, the other being the
non-degeneracy of the integral Hodge--de Rham spectral sequence.
In this paper we convey the idea that, 
for the question asked in the beginning, the trouble comes from the second phenomenon.
To be more precise, we define virtual Hodge numbers for
any smooth proper variety
in characteristic \(p\), see~\cref{virtual Hodge}.
In~\cref{equality of numbers}, we observe that 
if the integral Hodge--de Rham spectral sequence
of a lift \(\mathcal{X}\) degenerates saturatedly 
(see~\cref{definition of SS degeneration}),
then the virtual Hodge numbers of \(\mathcal{X}_0\) agree with the Hodge numbers of
the generic fiber \(X\).

Then we show the following:
\begin{theorem}[Main Theorem]
\label{main theorem}
Let \(\mathcal{X} \to \mathrm{Spf}(\mathcal{O}_K)\) be a smooth proper formal scheme,
let $\mathrm{W}(\kappa)$ be the Witt ring of the residue field of $\mathcal{O}_K$.
Let \(\mathfrak{S} \coloneqq \mathrm{W}(\kappa)[\![u]\!]\) 
be the Breuil--Kisin prism associated with \(\mathcal{O}_K\)
and let \(E\) be an associated Eisenstein polynomial, see \cite[Example 1.3.(3)]{BS19}.
Assume that
\begin{enumerate}
    \item there is a lift of \(\mathcal{X}\) over \(\mathfrak{S}/(E^2)\); and
    \item the relative dimension of \(\mathcal{X}\) 
    and the ramification index \(e\) of \(\mathcal{O}_K\) satisfy the inequality: \((\dim(\mathcal{X}_0) + 1) \cdot e < p-1\).
\end{enumerate}
Then the Hodge--de Rham spectral sequence for \(\mathcal{X}\) is split degenerate.
In particular, we have equality of (virtual) Hodge numbers:
\[
\mathfrak{h}^{i,j}(\mathcal{X}_0) = h^{i,j}(X).
\]
\end{theorem}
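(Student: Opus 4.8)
The plan is to isolate the geometric core of the statement and let \cref{equality of numbers} do the rest. The displayed equality \(\mathfrak{h}^{i,j}(\mathcal{X}_0)=h^{i,j}(X)\) is not something I would attack directly: it is immediate from \cref{equality of numbers} once we know that the integral Hodge--de Rham spectral sequence of \(\mathcal{X}\) is split degenerate, since split degeneration in particular entails the saturated degeneration hypothesis of that proposition. So the whole task is to show, under hypotheses (1) and (2), that the Hodge filtration on \(R\Gamma_{\mathrm{dR}}(\mathcal{X}/\mathcal{O}_K)\) splits with associated graded \(\bigoplus_i R\Gamma(\mathcal{X},\Omega^i_{\mathcal{X}/\mathcal{O}_K})[-i]\).

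The strategy is to run the Deligne--Illusie/Fontaine--Messing argument inside prismatic cohomology over the Breuil--Kisin prism \(\mathfrak{S}\). First I would invoke the prismatic--de Rham comparison \(R\Gamma_{\mathrm{dR}}(\mathcal{X}/\mathcal{O}_K)\simeq R\Gamma_{\Prism}(\mathcal{X}/\mathfrak{S})\,\widehat{\otimes}^{\mathrm{L}}_{\mathfrak{S},\phi}\,\mathcal{O}_K\) and identify the Hodge filtration with the \(\phi\)-twisted base change of the Nygaard filtration \(\mathcal{N}^{\geq\bullet}R\Gamma_{\Prism}(\mathcal{X}/\mathfrak{S})\); then the \(i\)-th Hodge-graded piece is governed by \(\mathrm{gr}^i_{\mathcal{N}}R\Gamma_{\Prism}(\mathcal{X}/\mathfrak{S})\), which is a Breuil--Kisin twist of the \(i\)-th step of the conjugate filtration on the Hodge--Tate cohomology \(R\Gamma_{\Prism}(\mathcal{X}/\mathfrak{S})\,\widehat{\otimes}^{\mathrm{L}}_{\mathfrak{S}}\,\mathcal{O}_K\), whose own graded pieces are (twists of) \(R\Gamma(\mathcal{X},\Omega^j_{\mathcal{X}/\mathcal{O}_K})[-j]\) for \(j\leq i\). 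This is where the numerical hypothesis enters: the identification of Nygaard with Hodge, and of the conjugate graded pieces with honest Hodge cohomology---with no correction terms, no divided-power denominators creeping in through the divided Frobenius \(\phi_i\colon\mathcal{N}^{\geq i}R\Gamma_{\Prism}(\mathcal{X}/\mathfrak{S})\to R\Gamma_{\Prism}(\mathcal{X}/\mathfrak{S})\), and no interference from the \(u^\infty\)-torsion submodule---should hold only within a window of degrees whose width is controlled by \(p\), the ramification index \(e\), and \(\dim(\mathcal{X}_0)\). The inequality \((\dim(\mathcal{X}_0)+1)\cdot e<p-1\) is precisely the requirement that every potentially nonzero Hodge piece sits inside that window; the extra \(+1\) reflects the Breuil--Kisin (Tate) twist that shows up in the Hodge--Tate comparison, and the factor \(e\) reflects that each step of the Hodge filtration carries a factor of \(u^e\) after reduction. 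This is the structural analogue of the classical bound \(\dim<p\).

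With the comparison in place, I would use the lift of \(\mathcal{X}\) over \(\mathfrak{S}/(E^2)\) exactly as Deligne--Illusie use a lift of the special fiber over \(W_2(\kappa)=W(\kappa)/p^2\). The ring \(\mathfrak{S}/(E^2)\) is the square-zero thickening of \(\mathcal{O}_K=\mathfrak{S}/E\) by the invertible ideal \((E)/(E^2)\), and---just like \(W_2(\kappa)\) over \(W(\kappa)\)---is not itself a prism, so the lift carries no prismatic structure of its own; rather, it provides, étale-locally, the extra Frobenius/\(\delta\)-type datum needed to trivialize the degree-one obstruction classes (of Kodaira--Spencer type, living in the pertinent \(\mathrm{Ext}^1\) between consecutive twisted Hodge pieces) that obstruct splitting the conjugate/Nygaard filtration, and its existence over all of \(\mathcal{X}\) makes the local trivializations glue in the derived category---precisely the mechanism of the Deligne--Illusie construction. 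Patching the resulting maps produces a morphism from \(\bigoplus_i R\Gamma(\mathcal{X},\Omega^i_{\mathcal{X}/\mathcal{O}_K})[-i]\) to the \(\leq N\)-truncation of the relevant (conjugate- or \(\phi\)-twisted-Nygaard-)filtered complex; because hypothesis (2) pushes \(N\) past \(\dim(\mathcal{X}_0)\), with room to spare for the twists, the truncation is vacuous and the morphism is an isomorphism. Unwinding this through the prismatic--de Rham comparison gives the splitting \(R\Gamma_{\mathrm{dR}}(\mathcal{X}/\mathcal{O}_K)\simeq\bigoplus_i R\Gamma(\mathcal{X},\Omega^i_{\mathcal{X}/\mathcal{O}_K})[-i]\) compatible with the Hodge filtration, that is, split degeneration.

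I expect the main obstacle to be the middle step. Pinning down the precise window in which the Nygaard filtration computes the Hodge filtration without correction forces one to control, simultaneously, the divided Frobenius and the \(u^\infty\)-torsion submodule of \(R\Gamma_{\Prism}(\mathcal{X}/\mathfrak{S})\)---this torsion being the second pathology flagged in the introduction, and exactly what the saturated-degeneration formalism is designed to absorb---and one must also check that the Deligne--Illusie-type splitting is compatible with reduction to \(\kappa\) and with the conjugate filtration, so that passing between the Hodge--Tate, Nygaard, and de Rham pictures is legitimate and loses no graded piece. By contrast, the reduction in the first paragraph and the final numerology should be essentially formal once the ramification-controlled comparison is in hand.
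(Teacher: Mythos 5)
Your opening reduction (split degeneration plus \cref{equality of numbers} gives the equality of Hodge numbers) matches the paper, but the core of your argument has a genuine gap. You propose to use the \(\mathfrak{S}/(E^2)\)-lift to run a Deligne--Illusie construction that splits the Nygaard/Hodge filtration and, after unwinding the prismatic--de Rham comparison, yields an equivalence \(R\Gamma_{\mathrm{dR}}(\mathcal{X}/\mathcal{O}_K)\simeq\bigoplus_i R\Gamma(\mathcal{X},\Omega^i_{\mathcal{X}/\mathcal{O}_K})[-i]\). But what the lift actually gives you, via \cref{prisms}(3), is a splitting of \(\mathcal{O}_{\mathcal{X}}\to\tau^{\leq 1}R\nu_*\overline{\mathcal{O}}_{\Prism}\), hence (after anti-symmetrizing, using \(\dim<p\)) a splitting of the \emph{conjugate} filtration on the \emph{Hodge--Tate} complex. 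There is no mechanism --- in the paper or in the literature you could cite --- by which this transfers to a splitting of the Hodge filtration on de Rham cohomology: the de Rham comparison is the base change along the \emph{Frobenius-twisted} map \(\mathfrak{S}\to\phi_*\mathcal{O}_K\), and the Hodge--Tate splitting does not base-change through that twist. Your middle step (identifying Nygaard with Hodge "without correction terms" in a window, and trivializing the obstruction classes for that filtration from the mod-\(E^2\) lift) is exactly the unproven content, and you flag it yourself as the main obstacle; the paper deliberately avoids it and never constructs any splitting map on the de Rham side.

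The paper's actual bridge is different and is missing from your proposal entirely. It uses Min's structure theorem (\cref{structure of prismatic}): for \(i\cdot e<p-1\), \(H^i_{\Prism}(\mathcal{X}/\mathfrak{S})\cong\mathfrak{S}^n\oplus\bigoplus_j\mathfrak{S}/p^{n_j}\), so the two specializations \(H^i_{\mathrm{dR}}\) and \(H^i_{\mathrm{HT}}\) are \emph{abstractly isomorphic} \(\mathcal{O}_K\)-modules; combined with the torsion-length and characteristic-polygon formalism of Section 2 (\cref{equivalence of SS}, \cref{characterizing degenerations}), this transfers "split degenerate torsion" from the Hodge--Tate spectral sequence to the Hodge--de Rham one in degrees \(\leq T-1\), where \(T\) is the largest integer with \(T\cdot e<p-1\). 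Since Min's range does not reach the top degree \(2d\), the paper then invokes Poincar\'e duality for de Rham and Serre duality for Hodge cohomology together with \cref{duality for perfect complex} to get the same conclusion in degrees \(\geq 2d+2-T\), and the hypothesis \((d+1)e<p-1\) is precisely the condition that these two ranges cover all degrees. So your heuristic for where the inequality enters (a twist window for a Nygaard-versus-Hodge identification) is not its actual role, and without Min's theorem and the duality step your argument has no way to reach the stated conclusion.
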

One may think of this result as a mixed characteristic analogue of 
a theorem by Deligne--Illusie~\cite{DI87}.
There is a similar statement with weaker conclusion when \(\dim X\)
exceeds the bound in (2), see~\cref{porism}.
In the case when \(\mathcal{O}_K\) is the Witt ring of a perfect field, condition (1)
is automatic and our result can also be deduced
from Fontaine--Messing's result~\cite{FM87}
or Kato's result~\cite{K87}.
We summarize the implication relations between various
relevant conditions in~\cref{summary}.
The proof of the main theorem uses recent theory of prismatic cohomology
due to Bhatt--Scholze~\cite{BS19} along with a result of Min~\cite{Min19}.
For more details, see~\cref{Consequences}.

Lastly, one may wonder if either condition (1) or (2) is really necessary.
Previously in~\cite{Li18} we have constructed a pair of relative \(3\)-folds over 
\(\mathbb{Z}_p[\zeta_p]\)
with isomorphic special fiber,
such that their generic fibers have different Hodge numbers,
showing that the conclusion of \cref{main theorem} is not true in general.
While it is unclear if condition (2) is really necessary,
in~\cref{lifting ABM}, which is independent from other sections,
we discuss extensively an example illustrating the necessity of condition (1).
\begin{theorem}[{see~\cref{main example}}]
There exists a smooth projective relative \(4\)-fold \(\mathcal{X}\)
over a ramified degree two extension \(\mathcal{O}_K\) of \(\mathbb{Z}_p\), such that
both of its Hodge--de Rham and Hodge--Tate spectral sequences are non-degenerate.
Moreover the Hodge/conjugate filtrations are non-split as \(\mathcal{O}_K\)-modules.
\end{theorem}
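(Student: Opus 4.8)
The plan is to produce the desired $4$-fold as an algebraization of a non-degeneration phenomenon that already occurs, over $\mathcal{O}_K$, for the classifying stack of a finite flat group scheme — this is the "ABM" input, in the spirit of Antieau–Bhatt–Mathew's counterexamples to Hochschild–Kostant–Rosenberg, but refined so that the defect is visible already in characteristic zero and is controlled by the ramification rather than by the dimension. Concretely, we would take $p$ small (so that $(\dim\mathcal{X}_0+1)\cdot e = 10 \ge p-1$ and \cref{main theorem} imposes no constraint), $\mathcal{O}_K$ a totally ramified quadratic extension of $\mathbb{Z}_p$ containing $\zeta_p$ (e.g.\ $\mathcal{O}_K = \mathbb{Z}_p[\zeta_p]$ when $p=3$, linking up with the relative $3$-fold of \cite{Li18}), and work with the group scheme $\mu_p$ over $\mathcal{O}_K$ together with its classifying stack $B\mu_p$.

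The first step is to compute, over $\mathcal{O}_K$ and in low cohomological degrees, the integral de Rham and Hodge cohomology of $B\mu_p$ — equivalently its prismatic cohomology relative to the Breuil–Kisin prism $\mathfrak{S}$, reduced appropriately — using an explicit cosimplicial/bar model for $B\mu_p$ together with the Hodge–de Rham and conjugate spectral sequences. The goal is to isolate a cohomology class in total degree $\le 4$ that simultaneously (a) forces a nonzero differential in the integral Hodge–de Rham spectral sequence, (b) realizes, on the de Rham abutment, an $\mathcal{O}_K$-module extension between a free part and a $\pi$-torsion part (with $\pi$ a uniformizer) that fails to split; and then, via the Hodge–Tate comparison and the Sen operator, to check that this same class produces a nonzero differential in the conjugate spectral sequence and a non-split conjugate filtration. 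The crucial point is that the presence of $\zeta_p$ — i.e.\ the ramification $e = 2$ — is exactly what opens the door: over an unramified base of dimension $<p$ such a defect is forbidden by \cref{main theorem} (or by Fontaine–Messing/Kato).

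The second step is to algebraize. We would choose a faithful linear action of $\mu_p$ on a projective space $\mathbb{P}(V)_{\mathcal{O}_K}$ with a $\mu_p$-stable open subscheme on which the action is free, form the resulting smooth projective geometric quotient, and cut it down (by hyperplane sections, or by a more economical fibered construction of ABM type) to a relative $4$-fold $\mathcal{X} \to \mathrm{Spec}(\mathcal{O}_K)$, arranged so that a Lefschetz-type comparison identifies the relevant low-degree de Rham, Hodge, Hodge–Tate and étale cohomologies of $\mathcal{X}$ with those of $B\mu_p$. Granting that the defect of Step 1 sits in a sufficiently low degree, this transports the non-degeneration of both spectral sequences and the non-splitness of both filtrations to $\mathcal{X}$; smoothness and projectivity over $\mathcal{O}_K$ are immediate from the quotient construction, and flatness follows because the generic fiber has the expected dimension.

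The \textbf{main obstacle} is Step 1: we must compute the cohomology of $B\mu_p$ over $\mathcal{O}_K$ with full control of the $\mathcal{O}_K$-module structure — torsion orders of the Hodge and de Rham groups, and the precise extension classes — done integrally, where the Hodge and conjugate spectral sequences genuinely interact. I expect this to require either a hands-on analysis of the $q$-de Rham/prismatic complex of $\mu_p$ (exploiting the divisibility relating $E$ and $u^p$, and the structure of $\mathfrak{S}/(E^2)$) or an input such as Min's computations \cite{Min19}. A secondary difficulty is ensuring, in Step 2, that the truncation both preserves the defect and brings the relative dimension down to exactly $4$ while keeping the quotient smooth; this constrains the choice of representation $V$ and may force a less naive model than a projective-space quotient.
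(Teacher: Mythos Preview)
Your overall strategy — compute the Hodge--de Rham and Hodge--Tate spectral sequences for the classifying stack of an order-$p$ group scheme over $\mathcal{O}_K$, exhibit a non-degeneration, then approximate by a smooth projective $4$-fold — is exactly the paper's approach. The genuine gap is your choice of group scheme.

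The group $\mu_p$ is linearly reductive over any base: its representations are just $\mathbb{Z}/p$-graded modules, so $H^i(B\mu_p,\mathcal{F})=0$ for every $i>0$ and every quasi-coherent $\mathcal{F}$. Since $L_{B\mu_p/\mathcal{O}_K}\simeq \mathcal{O}/(p)[-1]$, one finds $H^i(B\mu_p,\wedge^j L)$ is nonzero only for $i=j$, and the Hodge--de Rham spectral sequence degenerates for trivial degree reasons — there is no room for a differential. More conceptually, $\mu_p$ is already defined over $\mathbb{Z}$, hence over $W(\kappa)\subset\mathfrak{S}/(E^2)$; so $B\mu_p$ over $\mathcal{O}_K$ \emph{does} lift to $\mathfrak{S}/(E^2)$ and satisfies condition (1) of \cref{main theorem}. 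It therefore cannot illustrate the necessity of that condition, which is the whole point of the example. The presence of $\zeta_p$ plays no role; the ``link'' with \cite{Li18} is a red herring.

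What the paper actually does is take the Tate--Oort group $G=G^{\mathcal{O}_S}_{\pi,\pi'}$ lifting $\alpha_p$, which exists precisely because $e\ge 2$ allows one to factor $p\omega=\pi\cdot\pi'$ with both factors in the maximal ideal. The special fiber $\alpha_p$ does \emph{not} lift to $W_2(\kappa)$, hence $BG$ fails condition (1). The cotangent complex is $L_{BG/\mathcal{O}_K}\simeq\mathcal{O}/(\pi)[-1]$ (not $\mathcal{O}/(p)[-1]$), and the Hodge cohomology is genuinely spread out: using the computations of \cite{ABM19} for $B\alpha_p$ together with injectivity of specialization, the paper exhibits a class $\tau\in H^2(BG,L_{BG/\mathcal{O}_K})$ with $d_1(\tau)=u^2$ in the Hodge--de Rham spectral sequence and $d_2(\tau)=\beta^2$ in the Hodge--Tate spectral sequence. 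The approximation step is then exactly your Step 2. So your architecture is right, but you must replace $\mu_p$ by a lift of $\alpha_p$; the Step 1 computation you flagged as the main obstacle then reduces, via specialization, to the known $B\alpha_p$ calculations.
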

The idea of construction, which may be traced back to W.~Lang's work~\cite{Lang95}
and Raynaud's~\cite{Ray79}, is as follows.
The exotic group scheme \(\alpha_p\) admits liftings over ramified \(p\)-adic rings
of integers.
We choose a lift \(G\) over a degree \(2\) ramified \(p\)-adic ring of integers,
then we study the Hodge--de Rham and Hodge--Tate spectral sequences
of \(BG\), the classifying stack of \(G\).
With the aid of various computations in~\cite{ABM19},
we find out that both are non-degenerate starting at degree \(3\)
with non-split Hodge/conjugate filtrations starting at degree \(2\).
In the end, we take an approximation of \(BG\) to get the desired example.

\subsection*{Acknowledgement}
The author cannot overstate how much this paper owes to Bhargav Bhatt for so many helpful discussions.
He is also grateful to Yu Min for useful conversations.
The author would also like to thank
Johan de Jong, Remy van Dobben de Bruyn, Arthur-C\'{e}sar Le Bras, Tong Liu, 
Qixiao Ma, Emanuel Reinecke, and Jason Starr,
for their interests in and correspondence about this project.
The author thanks anonymous referees, for reminding the author of using duality statements,
pointing out many references as well as several precious comments.

\section{Preliminaries on spectral sequences over DVRs}
This section is a general discussion of spectral sequences associated with a filtered bounded 
perfect complex over a DVR.

\begin{notation}
Throughout this section, let \(R\) be a DVR with a uniformizer \(\pi\).
Denote \(K \coloneqq R[1/\pi]\) and \(\kappa \coloneqq R/\pi\).
Let \((C,\mathrm{Fil}^{\bullet})\) be a filtered object in \(D^b_{\mathrm{Coh}}(R)\).
We assume the filtration on \(C\) to be exhaustive and complete.

Given a finitely generated \(R\)-module \(M\), we denote by \(M_{\mathrm{tor}}\) the torsion submodule in \(M\),
and we denote its torsion-free quotient by \(M_{\mathrm{tf}} \coloneqq M/M_{\mathrm{tor}}\).
\end{notation}

\begin{remark}
We do not assume this filtration to be either increasing or decreasing, as it is modeling both 
Hodge--de Rham and Hodge--Tate spectral sequences.
\end{remark}

From \((C,\mathrm{Fil}^{\bullet})\) we naturally get a spectral sequence converging from 
\(H^i(\mathrm{Gr}^j)\) to \(H^i(C)\).
From now on, we will call it ``the spectral sequence" if no confusion seems to arise.
In the following definition, we refine the classical notion of the spectral sequence being degenerate.

\begin{definition}
\label{definition of SS degeneration}
\leavevmode
\begin{enumerate}
    \item We say the spectral sequence \emph{degenerates} or \emph{is degenerate} if for all pairs of integers \((i,j)\),
    the natural map \(\mathrm{H}^i(\mathrm{Fil}^j) \to \mathrm{H}^i(C)\) is an injection;
    \item We say the spectral sequence \emph{degenerates saturatedly} or \emph{is saturated degenerate} if it degenerates and
    the induced injection \(\mathrm{H}^i(\mathrm{Fil}^j)_{\mathrm{tf}} \to \mathrm{H}^i(C)_{\mathrm{tf}}\) is saturated; and
    \item We say the spectral sequence \emph{degenerates splittingly} or \emph{is split degenerate} if it degenerates and
    the induced injection \(\mathrm{H}^i(\mathrm{Fil}^j) \to \mathrm{H}^i(C)\) splits.
\end{enumerate}
\end{definition}

Recall that an injection/inclusion of torsion-free \(R\)-modules \(N \subset M\) is said to be \emph{saturated} if we have \(\pi N = N \cap \pi M\)
or, what is the same, the quotient \(M/N\) is \(\pi\)-torsion-free.

\begin{remark}
It is obvious that the spectral sequence being split degenerate implies it being saturated degenerate, and both implies it is degenerate.
\end{remark}

In the case of Hodge--de Rham or Hodge--Tate spectral sequences (over mixed characteristic DVRs), 
we know that they degenerate after inverting \(p\) (see~\cite[Corollary 1.8]{Sch13} and~\cite[Theorem 1.7]{BMS1}).
In this scenario, we have a condition on the infinite-page of the spectral sequence characterizing 
the spectral sequence being saturated or split degenerate:

\begin{proposition}
\label{characterizing degenerations}
Suppose that the spectral sequence degenerates after inverting \(\pi\).
Then 
\begin{enumerate}
    \item the spectral sequence is saturated degenerate if and only if
    \[
    \mathrm{length}(H^i(C)_{\mathrm{tor}}) = \sum_{j} \mathrm{length}(H^i(\mathrm{Gr}^j C)_{\mathrm{tor}})
    \]
    for all \(i\); and
    \item the spectral sequence is split degenerate if and only if
    there is an abstract isomorphism of \(R\)-modules:
    \[
    H^i(C)_{\mathrm{tor}} \simeq \bigoplus_{j} H^i(\mathrm{Gr}^j C)_{\mathrm{tor}}
    \]
    for all \(i\).
\end{enumerate}
\end{proposition}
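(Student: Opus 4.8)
The plan is to reduce everything to the short exact sequences attached to consecutive steps of the (finite) filtration, together with one purely module-theoretic input over the DVR $R$: for a short exact sequence $0 \to A \to B \to C \to 0$ of finitely generated $R$-modules one has
\[
\mathrm{length}(B_{\mathrm{tor}}) = \mathrm{length}(A_{\mathrm{tor}}) + \mathrm{length}(C_{\mathrm{tor}}) - \mathrm{length}\bigl(\overline{A}^{\,\mathrm{sat}}/\overline{A}\bigr),
\]
where $\overline{A} \subseteq B_{\mathrm{tf}}$ is the image of $A$ and $\overline{A}^{\,\mathrm{sat}}$ its saturation. Since $A \hookrightarrow B$ forces $A_{\mathrm{tf}} \hookrightarrow B_{\mathrm{tf}}$, this yields $\mathrm{length}(B_{\mathrm{tor}}) \le \mathrm{length}(A_{\mathrm{tor}}) + \mathrm{length}(C_{\mathrm{tor}})$, with equality precisely when $A_{\mathrm{tf}} \to B_{\mathrm{tf}}$ is saturated. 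The identity itself is a short diagram chase: $\mathrm{length}(B_{\mathrm{tor}})$ equals $\mathrm{length}(A_{\mathrm{tor}})$ plus the length of the image of $B_{\mathrm{tor}}$ in $C_{\mathrm{tor}}$, and one identifies $C_{\mathrm{tor}}$ modulo that image with $\overline{A}^{\,\mathrm{sat}}/\overline{A}$.

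For the d\'evissage, present the filtration (finite in our situation) as a decreasing tower $C = \mathrm{Fil}^{a} \supseteq \cdots \supseteq \mathrm{Fil}^{b} \supseteq \mathrm{Fil}^{b+1} = 0$. A ``composition of injections'' argument shows that the spectral sequence degenerates if and only if $H^i(\mathrm{Fil}^{j+1}) \to H^i(\mathrm{Fil}^j)$ is injective for all $i,j$, equivalently each triangle $\mathrm{Fil}^{j+1} \to \mathrm{Fil}^j \to \mathrm{Gr}^j$ induces a short exact sequence $0 \to H^i(\mathrm{Fil}^{j+1}) \to H^i(\mathrm{Fil}^j) \to H^i(\mathrm{Gr}^j) \to 0$; granting this, it degenerates saturatedly if and only if each $H^i(\mathrm{Fil}^{j+1})_{\mathrm{tf}} \to H^i(\mathrm{Fil}^j)_{\mathrm{tf}}$ is saturated (saturatedness of inclusions of free $R$-modules being transitive). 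Running the same argument after inverting $\pi$, the hypothesis that the spectral sequence degenerates after inverting $\pi$ says exactly that every connecting map $\delta\colon H^{i-1}(\mathrm{Gr}^j) \to H^i(\mathrm{Fil}^{j+1})$ has torsion image.

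For (1): the triangle $\mathrm{Fil}^{j+1} \to \mathrm{Fil}^j \to \mathrm{Gr}^j$ yields a short exact sequence $0 \to \operatorname{coker}(\delta^{i-1}) \to H^i(\mathrm{Fil}^j) \to \ker(\delta^i) \to 0$; since $\operatorname{coker}(\delta^{i-1})$ is $H^i(\mathrm{Fil}^{j+1})$ modulo the torsion submodule $\operatorname{im}(\delta^{i-1})$, and $\ker(\delta^i) \subseteq H^i(\mathrm{Gr}^j)$, the length formula combined with these two observations gives $\mathrm{length}(H^i(\mathrm{Fil}^j)_{\mathrm{tor}}) \le \mathrm{length}(H^i(\mathrm{Fil}^{j+1})_{\mathrm{tor}}) + \mathrm{length}(H^i(\mathrm{Gr}^j)_{\mathrm{tor}})$ for all $i,j$, and telescoping in $j$ gives the a priori bound $\mathrm{length}(H^i(C)_{\mathrm{tor}}) \le \sum_j \mathrm{length}(H^i(\mathrm{Gr}^j)_{\mathrm{tor}})$. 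Tracking the equality cases of the inequalities used at each step, sharpness of the telescoped bound for every $i$ forces all $\delta$ to vanish (so the spectral sequence degenerates and the triangles become short exact) and forces every $H^i(\mathrm{Fil}^{j+1})_{\mathrm{tf}} \to H^i(\mathrm{Fil}^j)_{\mathrm{tf}}$ to be saturated; by the d\'evissage this is saturated degeneration. Conversely, if the spectral sequence degenerates saturatedly, the length formula applied to the now-exact consecutive sequences is an equality at each step, and telescoping gives the displayed identity.

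For (2): ``only if'' is immediate, since split degeneration implies degeneration, so $H^i(\mathrm{Fil}^{j+1}) \subseteq H^i(\mathrm{Fil}^j) \subseteq H^i(C)$, and restricting a retraction $H^i(C) \to H^i(\mathrm{Fil}^{j+1})$ to $H^i(\mathrm{Fil}^j)$ splits $H^i(\mathrm{Fil}^{j+1}) \hookrightarrow H^i(\mathrm{Fil}^j)$, whence $H^i(C) \cong \bigoplus_j H^i(\mathrm{Gr}^j)$ and a fortiori $H^i(C)_{\mathrm{tor}} \cong \bigoplus_j H^i(\mathrm{Gr}^j)_{\mathrm{tor}}$. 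For ``if'', the abstract isomorphism gives the length identity of (1), so the spectral sequence degenerates saturatedly and every consecutive torsion sequence $0 \to H^i(\mathrm{Fil}^{j+1})_{\mathrm{tor}} \to H^i(\mathrm{Fil}^j)_{\mathrm{tor}} \to H^i(\mathrm{Gr}^j)_{\mathrm{tor}} \to 0$ is exact. Computing $\mathrm{length}\bigl(\pi^n H^i(\mathrm{Fil}^{a+1})_{\mathrm{tor}}\bigr)$ for all $n$ in two ways — from the sequence at $j=a$ together with the assumed isomorphism $H^i(C)_{\mathrm{tor}} \cong \bigoplus_j H^i(\mathrm{Gr}^j)_{\mathrm{tor}}$, and by iterating the consecutive sequences inside $\mathrm{Fil}^{a+1}$ — forces $H^i(\mathrm{Fil}^{a+1})_{\mathrm{tor}}$ to be a pure submodule of $H^i(C)_{\mathrm{tor}}$ and, abstractly, $\bigoplus_{j>a} H^i(\mathrm{Gr}^j)_{\mathrm{tor}}$; since a pure submodule of a finite-length module over a DVR is a direct summand, the $j=a$ sequence splits and the argument recurses on $\mathrm{Fil}^{a+1}$. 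Iterating, every consecutive torsion sequence splits, hence so does each inclusion on torsion parts, and adding back the torsion-free parts — which split off by saturatedness — each $H^i(\mathrm{Fil}^j) \hookrightarrow H^i(C)$ splits. I expect the main obstacle to be the bookkeeping in (1), namely pinning down exactly which equality cases are forced when the telescoped length inequality is sharp; the other delicate point is the passage, in (2), from an abstract isomorphism of torsion modules to an honest splitting of the filtration, which is where purity and ``pure $\Rightarrow$ direct summand'' enter.
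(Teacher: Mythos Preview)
Your proposal is correct, and the overall strategy matches the paper's: reduce both parts to module-theoretic statements about a finite filtration on a finitely generated $R$-module. The organization differs in two places worth noting.

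For (1), the paper routes the argument through the $E_\infty$-page, first showing that the length equality forces the identifications $\mathrm{Gr}^j(H^i(C))_{\mathrm{tor}} = H^i(\mathrm{Gr}^j C)_{\mathrm{tor}}$ (hence all differentials vanish, since they are torsion and every torsion class is a permanent cycle), and then isolating the remaining content into a separate ``filtered module lemma''. You instead work directly with the consecutive triangles $\mathrm{Fil}^{j+1} \to \mathrm{Fil}^j \to \mathrm{Gr}^j$ and your explicit length formula, tracking three equality conditions ($\delta^{i-1}=0$, $\ker(\delta^i)_{\mathrm{tor}} = H^i(\mathrm{Gr}^j)_{\mathrm{tor}}$, and vanishing saturation defect) at once. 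These are reorganizations of the same computation.

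For (2), the paper proves ``abstract isomorphism of torsion $\Rightarrow$ filtration splits'' via a characteristic-polygon inequality $\mathcal{P}_M \le \mathcal{P}_{N\oplus M/N}$, established by reducing to the case where $N$ and $M/N$ are both cyclic. Your purity argument via $\mathrm{length}(\pi^n\cdot-)$ is really the same inequality in different clothing (the polygon encodes precisely the sequence $n\mapsto \mathrm{length}(M/\pi^n)$), but your telescoping of the always-valid bound $\mathrm{length}(\pi^n B)\ge \mathrm{length}(\pi^n A)+\mathrm{length}(\pi^n C)$ for short exact sequences is more direct: it bypasses both the reduction to cyclic modules and the preliminary check that $N/\pi \hookrightarrow M/\pi$ which the paper needs for its polygons to have the same width. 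The price is that your ``two ways'' computation is terse; it would help to display the sandwich
\[
\sum_j \mathrm{length}(\pi^n \mathrm{Gr}^j_{\mathrm{tor}}) = \mathrm{length}(\pi^n M) \ge \mathrm{length}(\pi^n N)+\mathrm{length}(\pi^n Q) \ge \sum_{j>a}\mathrm{length}(\pi^n \mathrm{Gr}^j_{\mathrm{tor}}) + \mathrm{length}(\pi^n \mathrm{Gr}^a_{\mathrm{tor}})
\]
explicitly, since this is what simultaneously forces purity of $N$ in $M$ and the abstract isomorphism $N\cong\bigoplus_{j>a}\mathrm{Gr}^j_{\mathrm{tor}}$ needed for the recursion.
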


Note that we assumed the \(\mathrm{Fil}^{\bullet}\) to be exhaustive and saturated,
the summation process is finite.

\begin{proof}[Proof of~\cref{characterizing degenerations}(1)]
First notice that 
\[
\mathrm{length}(H^i(C)_{\mathrm{tor}}) \leq \sum_{j} \mathrm{length}(\mathrm{Gr}^j(H^i(C))_{\mathrm{tor}}) 
\leq \sum_{j} \mathrm{length}(H^i(\mathrm{Gr}^j C)_{\mathrm{tor}}),
\]
where the second inequality comes from the fact that the spectral sequence degenerates after inverting \(\pi\)
(so \(\mathrm{Gr}^j(H^i(C))_{\mathrm{tor}}\) must be a subquotient of \(H^i(\mathrm{Gr}^j C)_{\mathrm{tor}}\)).
Therefore the equality condition implies equality between \(\mathrm{Gr}^j(H^i(C))_{\mathrm{tor}}\) and
\(H^i(\mathrm{Gr}^j C)_{\mathrm{tor}}\). 
In other words, every element in \(H^i(\mathrm{Gr}^j C)_{\mathrm{tor}}\) is a permanent cycle.
Since the spectral sequence degenerates after inverting \(\pi\), we know all the differentials in the spectral sequence
are torsion. 
Combining these two, we see that all the differentials are forced to be zero, which exactly means that the spectral sequence must degenerate.

Now we have reduced the statement of (1) to: assume the spectral sequence degenerates,
then it is saturated degenerate if and only if the equality of lengths of \(\pi\)-torsions hold, 
and this statement is handled in the following~\cref{filtered module lemma}.

\begin{lemma}
\label{filtered module lemma}
Let \(M\) be a finitely generated \(R\)-module with an exhaustive and saturated filtration \(\mathrm{F}^{\bullet}\).
Then \(\mathrm{F}^i_{\mathrm{tf}} \subset M_{\mathrm{tf}}\) is saturated for all \(i\) if and only if
\[
\mathrm{length}(M_{\mathrm{tor}}) = \sum_{i} \mathrm{length}(\mathrm{Gr}^i)_{\mathrm{tor}}).
\]
\end{lemma}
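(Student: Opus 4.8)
The plan is to reduce the statement to a one-step computation about a single short exact sequence and then telescope along the filtration. Throughout write \(t(N) \coloneqq \mathrm{length}(N_{\mathrm{tor}})\) for a finitely generated \(R\)-module \(N\); since \(M\) is finitely generated and the filtration is exhaustive and complete we may reindex it as a finite chain \(M = \mathrm{F}^0 \supseteq \mathrm{F}^1 \supseteq \cdots \supseteq \mathrm{F}^n = 0\) with \(\mathrm{Gr}^i = \mathrm{F}^i/\mathrm{F}^{i+1}\) (an increasing filtration is treated verbatim the same way).

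First I would record the key additivity defect: for any short exact sequence \(0 \to N' \to N \to N'' \to 0\) of finitely generated \(R\)-modules, the map \(N'_{\mathrm{tf}} \to N_{\mathrm{tf}}\) is injective and
\[
t(N') + t(N'') - t(N) = \mathrm{length}\bigl((N_{\mathrm{tf}}/N'_{\mathrm{tf}})_{\mathrm{tor}}\bigr),
\]
which is \(\geq 0\) and vanishes precisely when \(N'_{\mathrm{tf}} \subset N_{\mathrm{tf}}\) is saturated (by definition of saturated, the quotient is \(\pi\)-torsion-free). To prove this I would use that \(N'_{\mathrm{tor}} = N' \cap N_{\mathrm{tor}}\), that the image of \(N'\) in \(N_{\mathrm{tf}}\) is exactly \(N'_{\mathrm{tf}}\), whence \(0 \to N_{\mathrm{tor}}/N'_{\mathrm{tor}} \to N'' \to N_{\mathrm{tf}}/N'_{\mathrm{tf}} \to 0\) is exact, and finally the elementary observation that for any short exact sequence \(0 \to A \to B \to C \to 0\) with \(A\) torsion the induced sequence \(0 \to A \to B_{\mathrm{tor}} \to C_{\mathrm{tor}} \to 0\) is still exact (a torsion element of \(C\) lifts to a torsion element of \(B\) because \(A\) is torsion); taking lengths gives the displayed identity.

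Applying this to \(0 \to \mathrm{F}^{i+1} \to \mathrm{F}^i \to \mathrm{Gr}^i \to 0\) for each \(i\) and telescoping (using \(t(\mathrm{F}^n)=0\)) yields
\[
\sum_i t(\mathrm{Gr}^i) - t(M) = \sum_i \mathrm{length}\bigl((\mathrm{F}^i_{\mathrm{tf}}/\mathrm{F}^{i+1}_{\mathrm{tf}})_{\mathrm{tor}}\bigr),
\]
a sum of non-negative terms; here the injectivity of \(\mathrm{F}^{i+1}_{\mathrm{tf}} \to \mathrm{F}^i_{\mathrm{tf}}\) is immediate from \(\mathrm{F}^{i+1} \cap (\mathrm{F}^i)_{\mathrm{tor}} = (\mathrm{F}^{i+1})_{\mathrm{tor}}\). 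Thus the length identity in the lemma holds if and only if \(\mathrm{F}^{i+1}_{\mathrm{tf}} \subset \mathrm{F}^i_{\mathrm{tf}}\) is saturated for every \(i\). To finish I would show this is equivalent to the asserted condition that \(\mathrm{F}^i_{\mathrm{tf}} \subset M_{\mathrm{tf}}\) be saturated for all \(i\), which is a purely formal transitivity property of saturated inclusions of torsion-free modules, proved in one line each: (a) if \(A \subset B\) and \(B \subset C\) are saturated then \(A \subset C\) is saturated, and (b) if \(A \subset B \subset C\) with \(A \subset C\) saturated then \(A \subset B\) is saturated. Composing along \(\mathrm{F}^j_{\mathrm{tf}} \subset \mathrm{F}^{j-1}_{\mathrm{tf}} \subset \cdots \subset M_{\mathrm{tf}}\) via (a) gives one direction, and (b) applied to \(\mathrm{F}^{i+1}_{\mathrm{tf}} \subset \mathrm{F}^i_{\mathrm{tf}} \subset M_{\mathrm{tf}}\) gives the other.

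The only step with any content is the additivity defect formula; the care there is entirely bookkeeping — keeping \(N'_{\mathrm{tor}}\) versus \(N' \cap N_{\mathrm{tor}}\) straight and checking exactness survives passage to torsion submodules in the one place it is used. After that, the proof is formal telescoping together with the transitivity of saturation, and I do not anticipate further obstacles.
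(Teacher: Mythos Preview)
Your proof is correct and rests on the same snake-lemma computation as the paper's: both identify the defect \(t(N')+t(N'')-t(N)\) with \(\mathrm{length}((N_{\mathrm{tf}}/N'_{\mathrm{tf}})_{\mathrm{tor}})\). The only difference is organizational: the paper applies this directly to each pair \((\mathrm{F}^i,M)\) and reads off saturation of \(\mathrm{F}^i_{\mathrm{tf}}\subset M_{\mathrm{tf}}\) immediately, whereas you apply it to adjacent steps \((\mathrm{F}^{i+1},\mathrm{F}^i)\), telescope, and then invoke transitivity of saturation---a perfectly fine and arguably more explicit route to the same conclusion.
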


\begin{proof}[Proof of the~\cref{filtered module lemma}]
First observe that for any \(i\) we have an inequality
\[
\mathrm{length}(M_{\mathrm{tor}}) \leq \mathrm{length}(\mathrm{F}^i_{\mathrm{tor}}) + \mathrm{length}((M/\mathrm{F}^i)_{\mathrm{tor}}),
\]
with equality holds if and only if the map
\[
M_{\mathrm{tor}} \to (M/\mathrm{F}^i)_{\mathrm{tor}}
\]
is surjective.
Hence we see that the equality in condition is equivalent to
\[
M_{\mathrm{tor}} \to (M/\mathrm{F}^i)_{\mathrm{tor}}
\]
being surjective for all \(i\).

Applying the snake lemma to
\[
\xymatrix{
0 \ar[r] & \mathrm{F}^i_{\mathrm{tor}} \ar[d] \ar[r] & \mathrm{F}^i \ar[d] \ar[r] & \mathrm{F}^i_{\mathrm{tf}} \ar[d] \ar[r] & 0 \\
0 \ar[r] & M_{\mathrm{tor}}          \ar[r] & M          \ar[r] & M_{\mathrm{tf}} \ar[r] & 0
}
\]
yields an exact sequence
\[
0 \to M_{\mathrm{tor}}/\mathrm{F}^i_{\mathrm{tor}} \to (M/\mathrm{F}^i)_{\mathrm{tor}} 
\to (M_{\mathrm{tf}}/\mathrm{F}^i_{\mathrm{tf}})_{\mathrm{tor}} \to 0,
\]
here we used the fact that \(\mathrm{F}^i \cap M_{\mathrm{tor}} = \mathrm{F}^i_{\mathrm{tor}}\).
This short exact sequence says exactly that the surjectivity of
\[
M_{\mathrm{tor}} \to (M/\mathrm{F}^i)_{\mathrm{tor}}
\]
is equivalent to \(M_{\mathrm{tf}}/\mathrm{F}^i_{\mathrm{tf}}\) being torsion-free,
hence concludes the proof of this lemma.
\end{proof}
\end{proof}

Before proving the second part of~\cref{characterizing degenerations},
let us briefly discuss the condition of an extension of finitely generated torsion \(R\)-modules being split.

\begin{definition}
\label{char polygon defn}
Let \(M\) be a finitely generated torsion \(R\)-module.
Write 
\[
M = \oplus_{i = 1}^{l} R/\pi^{n_i}
\] 
where \(n_1 \leq n_2 \leq \ldots \leq n_l\).
Then the \emph{characteristic polygon} of \(M\), denoted by \(\mathcal{P}_M\), is
the graph of the piece-wise linear function defined on \([0,l]\) passing through \((0,0)\),
with the \(i\)-th segment of horizontal span \(1\) 
and slope \(n_i\).
\end{definition}

\begin{remark}
It is easy to see that the width of \(\mathcal{P}_M\) is given by \(\dim_{\kappa} (M/\pi M)\),
and the end points are given by \((0,0)\) and \((\dim_{\kappa} (M/\pi M), \mathrm{length}(M))\).
\end{remark}

Given two finitely generated torsion \(R\)-modules, 
we would like to compare the characteristic polygons of an extension class and that of their direct sums.

\begin{example}[{see also~\cite[P.~502]{dJ93}}]
\label{guiding example}
Consider an extension:
\[
0 \to N = R/\pi^l \to M \to R/\pi^m \to 0,
\]
then we must have either \(M \simeq R/\pi^{l+m}\), or 
\(M \simeq R/\pi^n \oplus R/\pi^{m+l-n}\) where \(\min \{n,m+l-n \} \leq \min \{l,m \} \)
with equality if and only if the short exact sequence splits.

We observe the former case corresponds to \(N/\pi \to M/\pi\) being not injective.
In the latter case we see that \(\mathcal{P}_M\)
is always lower than or equal to \(\mathcal{P}_{N \oplus M/N}\), 
and equality holds exactly when the extension class splits.
\end{example}

Here by $\mathcal{P}_{N \oplus M/N}$ we mean the characteristic polygon associated with the
module $N \oplus M/N$.
Inspired by this example, 
we give the following criterion characterizing split short exact sequences of finitely generated torsion \(R\)-modules.

\begin{proposition}
\label{criterion of split SES}
Let \(M\) be a finitely generated torsion \(R\)-module, and \(N \subset M\) is a submodule.
Suppose \(N/\pi \to M/\pi\) is an injection.
Then \(\mathcal{P}_M\) is lower than or equal to \(\mathcal{P}_{N \oplus M/N}\),
with equality holds if and only if \(N \subset M\) splits.
\end{proposition}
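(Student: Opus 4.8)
The plan is to reduce the statement to a filtration-by-steps argument and then invoke the two-step case recorded in \cref{guiding example}. First I would observe that the hypothesis is stable under passing to quotients: since $N/\pi \to M/\pi$ is injective, for any submodule $N \subseteq N' \subseteq M$ the composite $N/\pi \to N'/\pi$ is injective, hence $N'/\pi \to M/\pi$ is injective; and the induced map $(N'/N)/\pi \to (M/N)/\pi$ is injective as well, because $\pi N \cap N' = \pi N'$ inside $M$ (this is exactly the injectivity of $N'/\pi \to M/\pi$ unwound). This bookkeeping is what lets me chop a general extension into a tower of one-dimensional-graded pieces without destroying the hypothesis.

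Next I would choose a filtration $N = M_0 \subseteq M_1 \subseteq \dots \subseteq M_r = M$ whose successive quotients $M_k/M_{k-1}$ are cyclic, i.e. of the form $R/\pi^{m_k}$; such a filtration exists by the structure theorem for finitely generated torsion $R$-modules. By the previous paragraph, at each stage $M_{k-1}/\pi \to M_k/\pi$ is injective, so \cref{guiding example} applies to the short exact sequence $0 \to M_{k-1} \to M_k \to R/\pi^{m_k} \to 0$ and gives $\mathcal{P}_{M_k}$ lying on or below $\mathcal{P}_{M_{k-1} \oplus R/\pi^{m_k}}$. The key numerical input I need is that the operation ``take characteristic polygon'' interacts monotonically with direct sum: if $\mathcal{P}_A \le \mathcal{P}_{A'}$ (same width, same total length) then $\mathcal{P}_{A \oplus B} \le \mathcal{P}_{A' \oplus B}$ for any $B$. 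This follows because the characteristic polygon of a direct sum is the ``upper concatenation'' obtained by sorting all the jump-heights (the $n_i$'s) in increasing order, equivalently $\mathcal{P}_{A\oplus B}$ is the pointwise-largest convex-type polygon with width $\operatorname{width}(A)+\operatorname{width}(B)$, total rise $\operatorname{length}(A)+\operatorname{length}(B)$, and whose set of slopes is the multiset union; majorization of the slope multiset of $A$ by that of $A'$ is preserved under adjoining the slopes of $B$.

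Chaining these inequalities down the tower yields
\[
\mathcal{P}_M = \mathcal{P}_{M_r} \le \mathcal{P}_{M_{r-1} \oplus R/\pi^{m_r}} \le \mathcal{P}_{M_{r-2} \oplus R/\pi^{m_{r-1}} \oplus R/\pi^{m_r}} \le \dots \le \mathcal{P}_{N \oplus (M/N)},
\]
where at the last step one uses that $\bigoplus_k R/\pi^{m_k}$ and $M/N$ have the same characteristic polygon (both are $M/N$ up to the sorting in \cref{char polygon defn}). This proves the inequality. For the equality clause: if $N \subseteq M$ splits then $M \cong N \oplus M/N$ and the polygons coincide trivially. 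Conversely, suppose $\mathcal{P}_M = \mathcal{P}_{N \oplus M/N}$; then every inequality in the chain above must be an equality, in particular the first one $\mathcal{P}_{M_r} = \mathcal{P}_{M_{r-1} \oplus R/\pi^{m_r}}$, so by the equality assertion of \cref{guiding example} the extension $0 \to M_{r-1} \to M_r \to R/\pi^{m_r} \to 0$ splits; peeling off that summand and repeating the argument on $M_{r-1}$ (whose hypothesis persists, and whose polygon equality is forced for the same reason) shows inductively that every stage splits, hence $N \subseteq M$ splits. The main obstacle I anticipate is the lemma that direct sum preserves the polygon ordering: one must be careful that the ordering ``$\mathcal{P}_A \le \mathcal{P}_{A'}$'' is equivalent to a majorization statement on the sorted jump-data and that majorization is additive under multiset union — stating and proving this combinatorial fact cleanly is where the real content lies, whereas the rest is organizing the induction so the hypothesis ``$N/\pi \hookrightarrow M/\pi$'' survives each step.
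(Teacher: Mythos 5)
Your reduction has a genuine gap at its central step: you invoke \cref{guiding example} for the extensions \(0 \to M_{k-1} \to M_k \to R/\pi^{m_k} \to 0\), but that example only treats extensions of a \emph{cyclic} module by a \emph{cyclic} module, whereas your \(M_{k-1}\) is an arbitrary finitely generated torsion module. The statement you actually need there --- ``general submodule, cyclic quotient, with mod-\(\pi\) injectivity, implies the polygon inequality with equality iff split'' --- is precisely a special case of the proposition you are proving, so as written the argument is circular. This is exactly the case the paper disposes of by a second reduction you skipped: it first splits \(N\) into cyclic summands and inducts on \(\dim_\kappa N/\pi\), then dually inducts on \(\dim_\kappa (M/N)/\pi\), so that the two-step input it finally cites really is the cyclic-by-cyclic case of \cref{guiding example}. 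To salvage your one-sided filtration argument you would still have to prove the cyclic-quotient two-step case by some such induction on the submodule side.

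There is a second, independent error in the bookkeeping paragraph: it is \emph{not} true that every intermediate submodule \(N \subseteq N' \subseteq M\) inherits injectivity of \(N'/\pi \to M/\pi\) from that of \(N/\pi \to M/\pi\) (injectivity of \(N/\pi \to N'/\pi\) says nothing about \(N'/\pi \to M/\pi\), and your identity \(\pi N \cap N' = \pi N'\) is not what the hypothesis unwinds to). Concretely, take \(M = R/\pi^2\), \(N = 0\), \(N' = \pi M\): then \(N'/\pi \to M/\pi\) is the zero map on a nonzero space, and for the filtration \(0 \subset \pi M \subset M\) the step inequality \(\mathcal{P}_{M} \le \mathcal{P}_{\pi M \oplus M/\pi M}\) is false, so your chain genuinely breaks for a badly chosen filtration. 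This part is repairable: pull back to \(M\) a \emph{split} cyclic filtration of \(M/N\); one then checks \(\pi M_k \cap M_{k-1} = \pi M_{k-1}\) using \(\pi M \cap N = \pi N\) together with the splitness of \(Q_{k-1} \subseteq Q_k\), but this argument must be supplied. (Your majorization lemma for \(\mathcal{P}_{A \oplus B}\) is correct and is also implicitly used by the paper; note, though, that your ``peeling'' step in the equality direction additionally needs a cancellation statement for a common summand \(B\), which deserves a proof or a rearrangement of the induction.)
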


\begin{proof}
First assume we can prove the statement when \(N\) is cyclic (generated by one element).
Write \(N = N_1 \oplus N_2\), inducting on the dimension of \(N/\pi\),
we see that 
\[
\mathcal{P}_M \leq \mathcal{P}_{N_1 \oplus M/N_1} 
\leq \mathcal{P}_{N_1 \oplus N_2 \oplus M/N} = \mathcal{P}_{N \oplus M/N}
\]
with equality holds if and only if both \(N_1 \subset M\) and \(N_2 \subset M/N_1\)
split, or equivalently \(N = N_1 \oplus N_2 \subset M\) splits.
Therefore we reduce to the case where \(N = R/\pi^n\) is cyclic.

Dually, we may induct on the dimension of \((M/N)/\pi\).
By the same argument as above, we may also assume that \(M/N\) is also cyclic.
Now we have reduced the statement to the case where both of \(N\) and \(M/N\) are cyclic,
which is discussed in~\cref{guiding example}.
\end{proof}

We may extend this discussion to a multi-filtered situation,
which is useful in considerations of spectral sequences.
Below let us record one consequence of~\cref{criterion of split SES}.

\begin{corollary}
\label{split multifiltration corollary}
Let \((M, \mathrm{F}^{\bullet})\) be a finitely generated torsion \(R\)-module
with an exhaustive and complete filtration.
Suppose that we have an abstract isomorphism 
\[
M \simeq \bigoplus_i \mathrm{Gr}^i,
\]
then all of \(\mathrm{F}^i \subset M\) are direct summands.
\end{corollary}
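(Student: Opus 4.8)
The plan is to induct on the number of non-trivial steps in the filtration, reducing everything to a sequence of two-step filtrations and then invoking \cref{criterion of split SES} repeatedly. The essential input is the hypothesis $M \simeq \bigoplus_i \mathrm{Gr}^i$, which we exploit not through the abstract isomorphism itself but through its numerical shadow: since the characteristic polygon only depends on the isomorphism class of a finitely generated torsion module, the hypothesis gives $\mathcal{P}_M = \mathcal{P}_{\bigoplus_i \mathrm{Gr}^i}$.

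First I would set up the filtration steps. After reindexing, say the non-trivial jumps occur at $i_1 < i_2 < \cdots < i_r$, so that $0 = \mathrm{F}^{>i_r} \subsetneq \mathrm{F}^{i_r} \subsetneq \cdots \subsetneq \mathrm{F}^{i_1} = M$, writing $G_k := \mathrm{Gr}^{i_k}$. Consider the submodule $N := \mathrm{F}^{i_r} = G_r$ (the smallest non-trivial step). It sits inside $M$ with $M/N$ carrying the induced filtration with graded pieces $G_1, \ldots, G_{r-1}$. The key observation is that $N/\pi \to M/\pi$ is injective: indeed, tensoring the filtration with $\kappa$ and using that $\dim_\kappa(M/\pi M) = \sum_k \dim_\kappa(G_k/\pi G_k)$ — which follows from the polygon equality (the width of $\mathcal{P}_M$ equals the sum of the widths, by the remark after \cref{char polygon defn}) — forces the associated graded of the filtration mod $\pi$ to have the right total dimension, so no collapsing occurs and each $\mathrm{F}^{i_k}/\pi \to M/\pi$ is injective; in particular this holds for $N$.

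Next I would apply \cref{criterion of split SES} to $N \subset M$: since $N/\pi \to M/\pi$ is injective, we get $\mathcal{P}_M \le \mathcal{P}_{N \oplus M/N}$. On the other hand, $\mathcal{P}_{N \oplus M/N}$ is itself controlled: applying the inductive hypothesis to $M/N$ with its induced filtration — for which we must first know $M/N \simeq \bigoplus_{k<r} G_k$ — we would get $\mathcal{P}_{M/N} \le \mathcal{P}_{\bigoplus_{k<r} G_k}$, whence $\mathcal{P}_{N \oplus M/N} \le \mathcal{P}_{N \oplus \bigoplus_{k<r} G_k} = \mathcal{P}_{\bigoplus_k G_k} = \mathcal{P}_M$. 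Combining the two inequalities forces equality throughout, so by the equality clause of \cref{criterion of split SES} the inclusion $N \subset M$ splits, and by the inductive hypothesis all the $\mathrm{F}^i/N \subset M/N$ split; pulling back along a splitting $M \simeq N \oplus M/N$ (and checking the splitting can be chosen compatibly with the filtration, which is where a little care with the two-step splitting from \cref{guiding example} is needed) shows every $\mathrm{F}^i \subset M$ is a direct summand.

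The main obstacle is the circularity lurking in the induction: to run the inductive step on $M/N$ I need the abstract isomorphism $M/N \simeq \bigoplus_{k<r} G_k$, but a priori I only know it for $M$. The fix is to phrase the induction more carefully — rather than carrying the abstract isomorphism as the hypothesis, one carries the polygon equality $\mathcal{P}_M = \mathcal{P}_{\bigoplus \mathrm{Gr}^i}$ and derives the isomorphism only at the very end (or observes that polygon equality for torsion modules of the same total length with a filtration is exactly enough). Concretely, since $\mathcal{P}_{M/N} \le \mathcal{P}_{\bigoplus_{k<r}G_k}$ always holds once the mod-$\pi$ injectivity is in place, and the reverse inequality is forced by the chain above, we recover $\mathcal{P}_{M/N} = \mathcal{P}_{\bigoplus_{k<r} G_k}$ — which for torsion modules with matching total length and width is equivalent to $M/N \simeq \bigoplus_{k<r} G_k$ — so the induction closes. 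This bookkeeping around what exactly to induct on is the only genuinely delicate point; the rest is a direct unwinding of \cref{criterion of split SES}.
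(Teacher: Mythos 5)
Your argument is correct and is essentially the paper's own proof: the paper likewise deduces mod-$\pi$ injectivity of all the filtration steps from the dimension count $\dim_{\kappa}(M/\pi)=\sum_j \dim_{\kappa}(\mathrm{Gr}^j/\pi)$ (via right exactness of reduction mod $\pi$) and then squeezes the chain of polygon inequalities $\mathcal{P}_M \le \mathcal{P}_{\mathrm{Gr}^0 \oplus M/\mathrm{F}^0} \le \cdots \le \mathcal{P}_{\bigoplus_i \mathrm{Gr}^i}$ into equalities before invoking \cref{criterion of split SES} once more. Your induction that peels off the smallest step $N$ and transports the hypothesis to $M/N$ through the polygon equality is just the unrolled form of that chain (and the compatibility of splittings you flag is the routine observation that if $M=N\oplus C$ and $\mathrm{F}\supset N$ then $\mathrm{F}=N\oplus(\mathrm{F}\cap C)$ with $\mathrm{F}\cap C$ a summand of $C$), so the two proofs differ only in bookkeeping.
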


\begin{proof}
Without loss of generality, let us assume that the filtration is increasing.
Now $M \simeq \bigoplus_i \mathrm{Gr}^i$ and is a Noetherian $R$-module,
and the filtration is exhaustive and complete,
we may assume that \(\mathrm{F}^i = 0 \) for \(i < 0\)
and that there is an integer $N$
such that $\mathrm{F}^j = M$ whenever $j > N$.

Next let us show that the natural map
$\mathrm{F}^i/\pi \to \mathrm{F}^{i+1}/\pi$ is injective for all $i$.
Due to right exactness of reduction modulo $\pi$, we have a chain of inequalities:
\[
\dim_{\kappa}(M/\pi) \leq \dim_{\kappa}(\mathrm{F}^{N-1}/\pi) + \dim_{\kappa}(\mathrm{Gr}^{N}/\pi)
\leq \ldots 
\]
\[
\leq \dim_{\kappa}(\mathrm{F}^{i}/\pi) + \sum_{j = i+1}^N \dim_{\kappa}(\mathrm{Gr}^j/\pi)
\leq \ldots \leq \sum_{j = 0}^N \dim_{\kappa}(\mathrm{Gr}^j/\pi) = \dim_{\kappa}(M/\pi),
\]
where the last equality follows from the condition that $M \simeq \bigoplus_i \mathrm{Gr}^i$.
Therefore all the inequalities in the above chain must in fact be equalities,
which is equivalent to saying all the maps
$\mathrm{F}^i/\pi \to \mathrm{F}^{i+1}/\pi$ are injective.

Now~\cref{criterion of split SES} implies that
\[
\mathcal{P}_M \leq \mathcal{P}_{\mathrm{Gr}^0 \oplus M/\mathrm{F}^0}
\leq \mathcal{P}_{\mathrm{Gr}^0 \oplus \mathrm{Gr}^1 \oplus M/\mathrm{F}^1}
\leq \ldots \leq \mathcal{P}_{\bigoplus_i \mathrm{Gr}^i},
\]
and our condition forces all the inequalities above to be an equality.
Hence applying~\cref{criterion of split SES} again yields what we want.
\end{proof}

Now we turn to the proof of (the ``if'' part of)~\cref{characterizing degenerations}(2). 

\begin{proof}[Proof of~\cref{characterizing degenerations}(2)]
By validity of (1), we see that in this situation, the spectral sequence is already saturated degenerate.
Therefore it suffices to show that the induced filtration 
\(H^i(\mathrm{Fil}^j C)_{\mathrm{tor}}= \mathrm{Fil}^j H^i(C)_{\mathrm{tor}}\) on
\(H^i(C)_{\mathrm{tor}}\) is split for all \(i\).

Notice that in our proof of (1), we established that the graded pieces of this filtration is exactly
given by \(H^i(\mathrm{Gr}^j C)_{\mathrm{tor}}\).
Now our condition implies that we have an abstract isomorphism
\[
H^i(C)_{\mathrm{tor}} \simeq \bigoplus_{j} \mathrm{Gr}^j H^i(C)_{\mathrm{tor}}.
\]
Applying~\cref{split multifiltration corollary}, we see that
\(H^i(\mathrm{Fil}^j C)_{\mathrm{tor}} = \mathrm{Fil}^j H^i(C)_{\mathrm{tor}}
\subset H^i(C)_{\mathrm{tor}}\) is split for all \(i\).
\end{proof}

Inspired by~\cref{characterizing degenerations}, we make the following further definition
concerning torsion part of various pages of the spectral sequence.

\begin{definition}
\label{definition of torsion degeneration}
\leavevmode
\begin{enumerate}
    \item We say the spectral sequence \emph{has saturated degenerate torsion in degree $i$}
    if we have an equality
    \[
    \mathrm{length}(H^i(C)_{\mathrm{tor}}) = \sum_{j} \mathrm{length}(H^i(\mathrm{Gr}^j C)_{\mathrm{tor}})
    \]
    \item We say the spectral sequence \emph{has split degenerate torsion in degree $i$} 
    if we have an abstract isomorphism of \(R\)-modules:
    \[
    H^i(C)_{\mathrm{tor}} \simeq \bigoplus_{j} H^i(\mathrm{Gr}^j C)_{\mathrm{tor}}.
    \]
\end{enumerate}
\end{definition}

The following Proposition is similar to~\cref{characterizing degenerations}.

\begin{proposition}
\label{characterizing torsion degenerations}
Suppose that the spectral sequence $(C, \mathrm{Fil}^\bullet)$ degenerates after inverting \(\pi\).
Let $i$ be an integer.
\begin{enumerate}
    \item If the spectral sequence has saturated degenerate torsion in degree $i$,
    then we have identifications:
    \[
    H^i(\mathrm{Gr}^j)_{\mathrm{tor}} \cong \mathrm{Gr}^j(H^i(C))_{\mathrm{tor}}.
    \]
    Consequently the maps $H^i(\mathrm{Fil}^j) \to H^i(C)$ are injective
    and the induced injection $H^i(\mathrm{Fil}^j)_{\mathrm{tf}} \to H^i(C)_{\mathrm{tf}}$
    is saturated  for all $j$.
    \item If the spectral sequence has split degenerate torsion in degree $i$,
    then the induced maps $H^i(\mathrm{Fil}^j) \to H^i(C)$ are split for all $j$.
\end{enumerate}
\end{proposition}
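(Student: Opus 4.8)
The plan is to imitate the proof of \cref{characterizing degenerations} but to localize every argument in the single cohomological degree $i$, being careful about which implications survive when we only know the torsion statement in that one degree rather than in all degrees simultaneously. For part (1), I would first revisit the two inequalities from the proof of \cref{characterizing degenerations}(1), namely
\[
\mathrm{length}(H^i(C)_{\mathrm{tor}}) \leq \sum_j \mathrm{length}(\mathrm{Gr}^j(H^i(C))_{\mathrm{tor}}) \leq \sum_j \mathrm{length}(H^i(\mathrm{Gr}^j C)_{\mathrm{tor}}),
\]
where the second inequality only uses that the spectral sequence degenerates after inverting $\pi$ (so that $\mathrm{Gr}^j(H^i(C))$ is, rationally, a direct summand of $H^i(\mathrm{Gr}^j C)$, whence $\mathrm{Gr}^j(H^i(C))_{\mathrm{tor}}$ is a subquotient of $H^i(\mathrm{Gr}^j C)_{\mathrm{tor}}$). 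The hypothesis of saturated degenerate torsion in degree $i$ forces both to be equalities, hence the isomorphism $H^i(\mathrm{Gr}^j)_{\mathrm{tor}} \cong \mathrm{Gr}^j(H^i(C))_{\mathrm{tor}}$ claimed. The subtlety here — and the main obstacle — is that in the global proof one could conclude all differentials vanish (because every torsion class in every $H^i(\mathrm{Gr}^j)$ is a permanent cycle); with only degree-$i$ information we cannot conclude the full spectral sequence degenerates, but we do not need to: the equality $H^i(\mathrm{Gr}^j)_{\mathrm{tor}} \cong \mathrm{Gr}^j(H^i(C))_{\mathrm{tor}}$ combined with the rational degeneration already implies that no differential into or out of the relevant $E_r$-terms in total degree $i$ can be nonzero (an incoming differential would be torsion-valued and would shrink the torsion length, an outgoing one would likewise create a torsion subquotient strictly smaller than $H^i(\mathrm{Gr}^j)_{\mathrm{tor}}$), so $H^i(\mathrm{Fil}^j) \to H^i(C)$ is injective; saturatedness of $H^i(\mathrm{Fil}^j)_{\mathrm{tf}} \to H^i(C)_{\mathrm{tf}}$ then follows from \cref{filtered module lemma} applied to $M = H^i(C)$ with the filtration $\mathrm{F}^\bullet = \mathrm{Fil}^\bullet H^i(C)$, whose graded pieces we have just identified with $\mathrm{Gr}^j(H^i(C))$ and whose torsion lengths therefore add up correctly.

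For part (2), I would argue exactly as in the proof of \cref{characterizing degenerations}(2): by part (1) the map $H^i(\mathrm{Fil}^j) \to H^i(C)$ is injective with the induced filtration on $H^i(C)$ having graded pieces $H^i(\mathrm{Gr}^j C)$, so the filtration $\mathrm{Fil}^\bullet H^i(C)_{\mathrm{tor}}$ on the torsion module $H^i(C)_{\mathrm{tor}}$ is exhaustive and complete with graded pieces $H^i(\mathrm{Gr}^j C)_{\mathrm{tor}}$. The split degenerate torsion hypothesis in degree $i$ gives an abstract isomorphism $H^i(C)_{\mathrm{tor}} \simeq \bigoplus_j H^i(\mathrm{Gr}^j C)_{\mathrm{tor}} = \bigoplus_j \mathrm{Gr}^j(H^i(C)_{\mathrm{tor}})$, so \cref{split multifiltration corollary} applies directly and shows every $\mathrm{Fil}^j H^i(C)_{\mathrm{tor}}$ is a direct summand of $H^i(C)_{\mathrm{tor}}$. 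Finally, to upgrade this to the splitting of $H^i(\mathrm{Fil}^j) \to H^i(C)$ on the full modules (not just their torsion), I would note that part (1) already tells us the map on torsion-free quotients is a saturated inclusion, i.e.\ $H^i(C)_{\mathrm{tf}}/H^i(\mathrm{Fil}^j)_{\mathrm{tf}}$ is torsion-free hence free, so that inclusion splits; combining a splitting of $H^i(\mathrm{Fil}^j)_{\mathrm{tf}} \hookrightarrow H^i(C)_{\mathrm{tf}}$ with the splitting of $\mathrm{Fil}^j H^i(C)_{\mathrm{tor}} \hookrightarrow H^i(C)_{\mathrm{tor}}$ via the functoriality of the torsion/torsion-free exact sequence yields a splitting of $H^i(\mathrm{Fil}^j) \hookrightarrow H^i(C)$ itself. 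I expect the bookkeeping in this last assembly step — checking that the two partial splittings are compatible across the short exact sequences $0 \to (-)_{\mathrm{tor}} \to (-) \to (-)_{\mathrm{tf}} \to 0$ for both $H^i(\mathrm{Fil}^j)$ and $H^i(C)$ — to be the only place requiring genuine care, and it is handled by a routine diagram chase rather than anything deep.
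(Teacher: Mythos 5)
Your overall architecture matches the paper's: you extract the identifications $H^i(\mathrm{Gr}^j)_{\mathrm{tor}} \cong \mathrm{Gr}^j(H^i(C))_{\mathrm{tor}}$ from the proof of \cref{characterizing degenerations}(1) exactly as the paper does; your route to saturatedness, via \cref{filtered module lemma} applied to the induced filtration on $H^i(C)$, is a harmless variant of the paper's snake-lemma argument involving $H^i(C/\mathrm{Fil}^j)$; and your part (2) --- split the torsion part by \cref{split multifiltration corollary} and reassemble with the torsion-free part, whose quotient is free by (1) --- is precisely the paper's argument, with the final assembly (which is indeed a routine check) made explicit.

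The gap is in your injectivity step in (1). First, your claim that the outgoing differentials from total degree $i$ must vanish does not follow from the hypothesis, and the reason you give is wrong: an outgoing differential is torsion-valued, so passing to its kernel $Z_r \subseteq E_r^{j,i-j}$ changes neither the rank nor, as long as the differential kills torsion classes, the torsion length (e.g.\ $\pi R \oplus R/\pi \subsetneq R \oplus R/\pi$ have the same rank and the same torsion length). The equality of torsion lengths in degree $i$ only forces outgoing differentials to vanish on torsion, not to vanish; fortunately you do not need their vanishing. Second, and more seriously, even granting that every differential with target in total degree $i$ vanishes on every page, the inference ``hence $H^i(\mathrm{Fil}^j)\to H^i(C)$ is injective'' is not formal: injectivity for all $j$ is equivalent to the vanishing of the actual connecting maps $H^{i-1}(\mathrm{Gr}^j)\to H^i(\mathrm{Fil}^{j+1})$, which is a priori stronger than the vanishing of the differentials they induce on the $E_r$-pages, and promoting the latter to the former needs an induction along the finite filtration that you neither state nor prove. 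A repair inside your own length-counting framework: let $K^j=\ker\big(H^i(\mathrm{Fil}^j)\to H^i(C)\big)$, which is torsion by rational degeneration; since $E_\infty^{j,i-j}$ is the quotient of $\mathrm{Im}\big(H^i(\mathrm{Fil}^j)\to H^i(\mathrm{Gr}^j)\big)$ by the image of $K^j$, a nonzero image would make $\mathrm{length}\,(E_\infty^{j,i-j})_{\mathrm{tor}}$ strictly smaller than $\mathrm{length}\,H^i(\mathrm{Gr}^j)_{\mathrm{tor}}$, contradicting the equality of the sums; hence (taking the filtration decreasing, say) $K^j\subseteq \mathrm{Im}\big(H^i(\mathrm{Fil}^{j+1})\to H^i(\mathrm{Fil}^j)\big)$, any preimage of an element of $K^j$ lies again in $K^{j+1}$, and iterating through the finitely many steps of the filtration gives $K^j=0$. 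With injectivity secured in this way, the rest of your write-up goes through.
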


\begin{proof}
For (1): in the proof of~\cref{characterizing degenerations} (1) we have established that
the equality of lengths of torsions implies the claimed identifications.
Since the spectral sequence is assumed to be degenerate after inverting $\pi$,
we immediately get that the maps $H^i(\mathrm{Fil}^j) \to H^i(C)$ are injective.

Next we look at the following diagram:
\[
\xymatrix{
0 \ar[r] & H^i(\mathrm{Fil}^j)_{\mathrm{tor}} \ar[r] \ar[d] & H^i(C)_{\mathrm{tor}} \ar[r] \ar[d] &
H^i(C/\mathrm{Fil}^j)_{\mathrm{tor}} \ar[d] \ar[r] & 0 \\
0 \ar[r] & H^i(\mathrm{Fil}^j) \ar[r] \ar[d] & H^i(C) \ar[r] \ar[d] & H^i(C/\mathrm{Fil}^j) \ar[d] & \\
& H^i(\mathrm{Fil}^j)_{\mathrm{tf}} \ar[r] & H^i(C)_{\mathrm{tf}} \ar[r] & H^i(C/\mathrm{Fil}^j)_{\mathrm{tf}} &
}.
\]
Notice that the first two rows are exact, and the snake lemma gives us an exact sequence
\[
0 \to H^i(\mathrm{Fil}^j)_{\mathrm{tf}} \to H^i(C)_{\mathrm{tf}} \to H^i(C/\mathrm{Fil}^j)_{\mathrm{tf}},
\]
which implies that $H^i(C)_{\mathrm{tf}}/H^i(\mathrm{Fil}^j)_{\mathrm{tf}}$, 
as a submodule of $H^i(C/\mathrm{Fil}^j)_{\mathrm{tf}}$, is torsion-free.

For (2): using what we have proved in (1), all we need to show is that the inclusion
$H^i(\mathrm{Fil}^j)_{\mathrm{tor}} \to H^i(C)_{\mathrm{tor}}$ splits,
which follows from identifications $H^i(\mathrm{Gr}^j)_{\mathrm{tor}} \cong \mathrm{Gr}^j(H^i(C))_{\mathrm{tor}}$
and~\cref{split multifiltration corollary}.
\end{proof}

One can also define what it means for the spectral sequence to have saturated/split degenerate
torsion in a range of degrees.
The above~\cref{characterizing torsion degenerations} is immediately generalized
to this generality.

The following~\cref{reading off dimension from special fiber} will be used in later sections.

\begin{proposition}
\label{reading off dimension from special fiber}
Let \(M\) be a finitely generated \(R\)-module, with \(N \subset M\) a submodule.
Suppose that \(N_{\mathrm{tf}} \subset M_{\mathrm{tf}}\) is saturated.
Then we have equality of dimensions:
\[
\dim_K N[1/\pi] = \dim_{\kappa} \mathrm{Im}(N/\pi \to M_{\mathrm{tf}}/\pi).
\]
\end{proposition}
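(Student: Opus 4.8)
The statement equates two dimensions: $\dim_K N[1/\pi]$, the rank of $N$ (equivalently of $N_{\mathrm{tf}}$), and the $\kappa$-dimension of the image of $N/\pi$ inside $M_{\mathrm{tf}}/\pi$. My plan is to reduce everything to the torsion-free quotients and then exploit saturatedness.

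First I would observe that $N/\pi \to M_{\mathrm{tf}}/\pi$ factors through $N_{\mathrm{tf}}/\pi$: the torsion submodule $N_{\mathrm{tor}}$ is killed by a power of $\pi$, but more relevantly the composite $N_{\mathrm{tor}} \hookrightarrow N \to M \to M_{\mathrm{tf}}$ is zero since $M_{\mathrm{tf}}$ is torsion-free, so the map $N/\pi \to M_{\mathrm{tf}}/\pi$ has the same image as $N_{\mathrm{tf}}/\pi \to M_{\mathrm{tf}}/\pi$. Thus $\dim_\kappa \mathrm{Im}(N/\pi \to M_{\mathrm{tf}}/\pi) = \dim_\kappa \mathrm{Im}(N_{\mathrm{tf}}/\pi \to M_{\mathrm{tf}}/\pi)$, and since $\dim_K N[1/\pi] = \dim_K N_{\mathrm{tf}}[1/\pi] = \operatorname{rank}_R N_{\mathrm{tf}}$, we have reduced to the case where $N, M$ are both torsion-free and $N \subset M$ is saturated: show $\operatorname{rank}_R N = \dim_\kappa \mathrm{Im}(N/\pi \to M/\pi)$.

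Now the saturatedness hypothesis says $M/N$ is torsion-free, so $0 \to N \to M \to M/N \to 0$ is a short exact sequence of finite free $R$-modules (finitely generated torsion-free over a DVR is free). Reducing mod $\pi$ is exact on this sequence — precisely because $\operatorname{Tor}_1^R(M/N,\kappa)=0$ as $M/N$ is free — so $0 \to N/\pi \to M/\pi \to (M/N)/\pi \to 0$ is exact, meaning $N/\pi \to M/\pi$ is injective. Hence its image has dimension $\dim_\kappa N/\pi = \operatorname{rank}_R N = \dim_K N[1/\pi]$, which is exactly the claimed equality. Tracing back through the reduction of the first paragraph finishes the proof.

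The only place requiring a little care — and the one I'd call the main (minor) obstacle — is the first reduction step: one must be sure that replacing $N$ by $N_{\mathrm{tf}}$ genuinely does not change the image in $M_{\mathrm{tf}}/\pi$, i.e. that passing to $N_{\mathrm{tf}}$ and reducing mod $\pi$ commute appropriately with the map into $M_{\mathrm{tf}}/\pi$. This is handled by the factorization $N \twoheadrightarrow N_{\mathrm{tf}} \to M_{\mathrm{tf}}$ noted above, after which $N_{\mathrm{tf}} \subset M_{\mathrm{tf}}$ is saturated by hypothesis and the rest is the elementary free-module argument. No deep input is needed; everything is standard DVR linear algebra, so the proof will be short.
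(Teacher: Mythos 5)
Your proof is correct and follows essentially the same route as the paper: reduce to the torsion-free quotients (noting both sides are unchanged) and then use that a saturated submodule of a finite free module over a DVR is a direct summand, which you phrase equivalently via freeness of $M_{\mathrm{tf}}/N_{\mathrm{tf}}$ and vanishing of $\operatorname{Tor}_1$. Nothing further is needed.
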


\begin{proof}
Both sides do not change when we passing from \(M\) to \(M_{\mathrm{tf}}\).
Hence we may assume \(M\) to be torsion-free, 
in which case \(N\) is a direct summand by being a saturated submodule,
and the statement becomes trivial in this situation.
\end{proof}

The next two Lemmas are pointed out by the anonymous referee, we thank them for this suggestion.

\begin{lemma}
\label{identification of sub}
Let $U \to V$ be a map of $R$-complexes.
Let $n$ be an integer and assume that $H^{n+1}(U) \to H^{n+1}(V)$ is injective.
Then we have an identification of subspaces in $H^n(V)/\pi$:
\[
\xymatrix{
\mathrm{Im}(H^n(U)/\pi \to H^n(V)/\pi)
\ar[d]^{\cong} \\
\mathrm{Im}(H^n(U/\pi) \to H^n(V/\pi)) 
\bigcap H^n(V)/\pi.
}
\]
\end{lemma}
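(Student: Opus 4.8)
The plan is to unwind the two images in $H^n(V)/\pi$ using the long exact sequences attached to $\cdot\pi$ on the complexes $U$ and $V$, and the reduction-mod-$\pi$ maps relating $H^n$ of a complex to $H^n$ of its reduction. Concretely, for any $R$-complex $W$ there is a tautological surjection $H^n(W)/\pi \twoheadrightarrow \mathrm{Im}(H^n(W) \to H^n(W/\pi))$ whose kernel involves the $\pi$-torsion of $H^{n+1}(W)$; more precisely, the short exact sequence $0 \to W \xrightarrow{\pi} W \to W/\pi \to 0$ gives $0 \to H^n(W)/\pi \to H^n(W/\pi) \to H^{n+1}(W)[\pi] \to 0$. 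So $H^n(W)/\pi$ injects into $H^n(W/\pi)$. I would apply this to both $W = U$ and $W = V$ and assemble the evident commutative square relating the four groups $H^n(U)/\pi$, $H^n(V)/\pi$, $H^n(U/\pi)$, $H^n(V/\pi)$.

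First I would record that $H^n(V)/\pi \hookrightarrow H^n(V/\pi)$, so it is harmless to regard $\mathrm{Im}(H^n(U)/\pi \to H^n(V)/\pi)$ as a subspace of $H^n(V/\pi)$; under this identification it is visibly contained in both $\mathrm{Im}(H^n(U/\pi)\to H^n(V/\pi))$ (chase the square: $H^n(U)/\pi \to H^n(V)/\pi \hookrightarrow H^n(V/\pi)$ equals $H^n(U)/\pi \hookrightarrow H^n(U/\pi) \to H^n(V/\pi)$) and $H^n(V)/\pi$. This gives the inclusion ``$\subseteq$'' of the asserted identification without any hypothesis. For the reverse inclusion, take a class $\bar x \in H^n(V/\pi)$ that lies in $H^n(V)/\pi$ and is the image of some $\bar u \in H^n(U/\pi)$; I must produce a class in $H^n(U)/\pi$ mapping to it. Since $\bar x \in H^n(V)/\pi = \ker(H^n(V/\pi) \to H^{n+1}(V)[\pi])$, and $\bar u \mapsto \bar x$, the image of $\bar u$ in $H^{n+1}(V)[\pi]$ vanishes; but that image factors as $H^n(U/\pi) \to H^{n+1}(U)[\pi] \to H^{n+1}(V)[\pi]$, and the second map is injective \emph{exactly} because $H^{n+1}(U) \to H^{n+1}(V)$ is injective (it restricts to an injection on $\pi$-torsion). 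Hence $\bar u$ already dies in $H^{n+1}(U)[\pi]$, i.e.\ $\bar u \in H^n(U)/\pi$, and its image in $H^n(V)/\pi \subseteq H^n(V/\pi)$ is $\bar x$. This closes the argument.

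\textbf{The main obstacle} is purely bookkeeping: one must set up the two connecting-map short exact sequences $0 \to H^n(W)/\pi \to H^n(W/\pi) \to H^{n+1}(W)[\pi] \to 0$ functorially in $W$ and check that the relevant square of these sequences for $U\to V$ commutes, so that the hypothesis ``$H^{n+1}(U)\to H^{n+1}(V)$ injective'' can be transported to ``$H^{n+1}(U)[\pi] \to H^{n+1}(V)[\pi]$ injective'' and fed into the diagram chase. There is no real analytic content; everything is a consequence of the snake lemma applied to multiplication by $\pi$, together with the observation that a submodule inclusion of $R$-modules restricts to an inclusion on $\pi$-torsion submodules. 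I would present the square explicitly and then run the two-line chase above for each inclusion.
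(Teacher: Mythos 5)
Your argument is correct. It differs from the paper's proof mainly in mechanics: the paper forms the cone $C$ of $U \to V$, uses the hypothesis to truncate the long exact sequence as $H^n(U) \to H^n(V) \to H^n(C) \to 0$, reduces mod $\pi$ by right-exactness, and then chases a two-row diagram whose vertical maps $H^n(W)/\pi \hookrightarrow H^n(W/\pi)$ are injective; you never form the cone, working instead with the functorial Bockstein sequences $0 \to H^n(W)/\pi \to H^n(W/\pi) \to H^{n+1}(W)[\pi] \to 0$ for $W = U, V$ and transporting the hypothesis to injectivity of $H^{n+1}(U)[\pi] \to H^{n+1}(V)[\pi]$. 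Both routes rest on the same universal-coefficient injection, so they are close cousins, but yours isolates exactly where the hypothesis enters and makes visible that the inclusion $\mathrm{Im}(H^n(U)/\pi \to H^n(V)/\pi) \subseteq \mathrm{Im}(H^n(U/\pi) \to H^n(V/\pi)) \cap H^n(V)/\pi$ holds unconditionally, whereas the paper's cone formulation packages the hypothesis into exactness of the reduced row and dovetails with its later application, where the cone is identified with $\mathrm{R\Gamma}(\Omega^{\leq i-1}_{\mathcal{X}/\mathcal{O}_K})$. Your only implicit obligation, which you correctly flag, is the naturality of the connecting map in $W$, which is standard.
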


\begin{proof}
Let us denote the cone of $U \to V$ by $C$.
The assumption yields an exact sequence
\[
H^n(U) \to H^n(V)
\to H^n(C) \to 0.
\]
Modulo \(\pi\), and use right exactness, we get
\[
H^n(U)/\pi \to H^n(V)/\pi
\to H^n(C)/\pi \to 0.
\]
Now we have the following diagram with horizontal lines being exact and vertical arrows being injective:
\[
\xymatrix{
H^n(U)/\pi \ar[r] \ar@{^{(}->}[d] & 
H^n(V)/\pi \ar[r] \ar@{^{(}->}[d] &
H^n(C)/\pi \ar@{^{(}->}[d]\\
H^n(U/\pi) \ar[r] &
H^n(V/\pi) \ar[r] &
H^n(C/\pi).
}
\]
A simple diagram chasing now gives the claimed identification.
\end{proof}

Combining~\cref{reading off dimension from special fiber} and~\cref{identification of sub} gives us
the following

\begin{lemma}
\label{referee suggest}
Let $U \to V$ be a map of $R$-complexes.
Let $n$ be an integer.
Assume that
\begin{enumerate}
    \item the image of $H^n(U) \to H^n(V)_{\mathrm{tf}}$ is saturated inside $H^n(V)_{\mathrm{tf}}$; and
    \item the map $H^{n+1}(U) \to H^{n+1}(V)$ is injective.
\end{enumerate}
Then we have an equality of dimensions:
\[
\dim_K \mathrm{Im}(H^n(U)[1/\pi] \to H^n(V)[1/\pi]) =
\]
\[
\dim_{\kappa} \mathrm{Im}(\Big(\mathrm{Im}\big(H^n(U/\pi) \to H^n(V/\pi)\big) 
\bigcap H^n(V)/\pi\Big) \to H^n(V)_{\mathrm{tf}}/\pi).
\]
\end{lemma}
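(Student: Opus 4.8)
The plan is to reduce the statement to \cref{reading off dimension from special fiber} and \cref{identification of sub}, both already available. Set $M \coloneqq H^n(V)$ and let $N \subseteq M$ be the image of $H^n(U) \to H^n(V)$. First I would observe that the composite $H^n(U) \to M \to M_{\mathrm{tf}}$ has image equal to $N_{\mathrm{tf}}$, regarded as a submodule of $M_{\mathrm{tf}}$; hence hypothesis (1) says exactly that $N_{\mathrm{tf}} \subseteq M_{\mathrm{tf}}$ is saturated. Applying \cref{reading off dimension from special fiber} to the inclusion $N \subseteq M$ then yields
\[
\dim_K N[1/\pi] = \dim_{\kappa} \mathrm{Im}(N/\pi \to M_{\mathrm{tf}}/\pi).
\]

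Next I would match the two sides of this identity with the quantities appearing in the lemma. Since localization at $\pi$ is exact, $N[1/\pi] = \mathrm{Im}(H^n(U)[1/\pi] \to H^n(V)[1/\pi])$, so the left-hand side is already in the desired form. For the right-hand side, I would factor $N/\pi \to M_{\mathrm{tf}}/\pi$ through $M/\pi = H^n(V)/\pi$; because $N$ is a quotient of $H^n(U)$, the image of $N/\pi$ inside $H^n(V)/\pi$ coincides with $\mathrm{Im}(H^n(U)/\pi \to H^n(V)/\pi)$. Now hypothesis (2) allows me to apply \cref{identification of sub}, which identifies this image, as a subspace of $H^n(V)/\pi$, with $\mathrm{Im}(H^n(U/\pi) \to H^n(V/\pi)) \cap H^n(V)/\pi$. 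Transporting this subspace along the quotient map $H^n(V)/\pi \to H^n(V)_{\mathrm{tf}}/\pi$ produces precisely the right-hand side of the asserted equality.

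The one point that needs care is to keep track of the relevant images as genuine subspaces of $H^n(V)/\pi$ rather than merely up to abstract isomorphism, so that the further passage to the quotient $H^n(V)_{\mathrm{tf}}/\pi$ is legitimate; this is exactly why \cref{identification of sub} is phrased as an identification of subspaces and not just of dimensions. Granting that, the whole argument is a short diagram chase gluing together the two cited results, and I do not anticipate any substantive obstacle.
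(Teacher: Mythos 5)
Your proposal is correct and follows the same route as the paper: apply \cref{reading off dimension from special fiber} to $N = \mathrm{Im}(H^n(U) \to H^n(V)) \subset M = H^n(V)$ and then rewrite the right-hand side via \cref{identification of sub}, with your identifications of $N_{\mathrm{tf}}$ with the image of $H^n(U)$ in $H^n(V)_{\mathrm{tf}}$ and of $\mathrm{Im}(N/\pi \to M/\pi)$ with $\mathrm{Im}(H^n(U)/\pi \to H^n(V)/\pi)$ being exactly the bookkeeping the paper leaves implicit. No gaps.
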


\begin{proof}
We use~\cref{reading off dimension from special fiber}, where $N \subset M$ are
given by the image of $H^n(U)$ inside $H^n(V)$.
We then use~\cref{identification of sub} to rewrite the right hand side of the equality.
\end{proof}

For our purpose later, we also need to discuss the relation of cohomologies
of a perfect $R$-complex and cohomologies of its dual.

\begin{lemma}
\label{duality for perfect complex}
Let $U$ be a perfect $R$-complex. Consider $V = RHom_R(U, R[0])$.
Then for any integer $i$ we have canonical identifications:
\[
H^{-i+1}(V)_{\mathrm{tor}} \cong \mathrm{Hom}_R(H^i(U)_{\mathrm{tor}}, K/R)
\text{ and } H^{-i}(V)_{\mathrm{tf}} \cong \mathrm{Hom}_R(H^i(U)_{\mathrm{tf}}, R).
\]
\end{lemma}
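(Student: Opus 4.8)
The plan is to use the universal coefficient--type spectral sequence for $V = R\mathrm{Hom}_R(U, R[0])$, or equivalently to work directly with a chosen perfect representative of $U$. First I would replace $U$ by a bounded complex of finite free $R$-modules (possible since $U$ is perfect and $R$ is a DVR, hence Noetherian regular of dimension one), so that $V = \mathrm{Hom}_R(U, R)$ is computed termwise and is again a bounded complex of finite free modules. Over a DVR, any finitely generated module $H$ has projective dimension at most $1$, so $\mathrm{Ext}^{>1}_R(H, R) = 0$; moreover $\mathrm{Ext}^1_R(R, R) = 0$ and for a torsion module $T$ one has $\mathrm{Hom}_R(T, R) = 0$ and $\mathrm{Ext}^1_R(T, R) \cong \mathrm{Hom}_R(T, K/R)$ (apply $\mathrm{Hom}_R(T, -)$ to $0 \to R \to K \to K/R \to 0$ and use that $T$ is torsion). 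This gives the two building blocks: a free module dualizes to a free module in the same degree, and a torsion module contributes an $\mathrm{Ext}^1$ term shifted by one, identified with its Pontryagin-type dual $\mathrm{Hom}_R(-, K/R)$.

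Next I would run the hypercohomology/universal coefficients spectral sequence $E_2^{p,q} = \mathrm{Ext}^p_R(H^{-q}(U), R) \Rightarrow H^{p+q}(V)$. Because $R$ has global dimension $1$, this spectral sequence has only two nonzero rows $p = 0, 1$, so it degenerates at $E_2$ and yields for each $n$ a short exact sequence
\[
0 \to \mathrm{Ext}^1_R(H^{n+1}(U), R) \to H^{-n}(V) \to \mathrm{Hom}_R(H^n(U), R) \to 0.
\]
Now $\mathrm{Ext}^1_R(H^{n+1}(U), R) = \mathrm{Ext}^1_R(H^{n+1}(U)_{\mathrm{tor}}, R) \cong \mathrm{Hom}_R(H^{n+1}(U)_{\mathrm{tor}}, K/R)$ is a finite-length torsion module, while $\mathrm{Hom}_R(H^n(U), R) = \mathrm{Hom}_R(H^n(U)_{\mathrm{tf}}, R)$ is finite free. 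Hence the short exact sequence above is precisely the decomposition of $H^{-n}(V)$ into its torsion submodule and torsion-free quotient: it forces $H^{-n}(V)_{\mathrm{tor}} \cong \mathrm{Hom}_R(H^{n+1}(U)_{\mathrm{tor}}, K/R)$ and $H^{-n}(V)_{\mathrm{tf}} \cong \mathrm{Hom}_R(H^n(U)_{\mathrm{tf}}, R)$. Re-indexing $-n \mapsto -i+1$ in the first and $-n \mapsto -i$ in the second gives exactly the two claimed identifications. I would then check that these identifications are canonical by noting that the edge maps of the spectral sequence, the connecting maps in the long exact $\mathrm{Ext}$ sequences, and the evaluation/duality pairings are all functorial in $U$.

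The main obstacle, such as it is, is bookkeeping rather than conceptual: one must be careful that the splitting of the short exact sequence into torsion and torsion-free parts is canonical on the level of the identifications (it is, because $\mathrm{Hom}_R(-, R)$ of a finitely generated module is automatically torsion-free and $\mathrm{Ext}^1_R(-, R)$ of it is automatically torsion, so the sequence coincides canonically with $0 \to H^{-n}(V)_{\mathrm{tor}} \to H^{-n}(V) \to H^{-n}(V)_{\mathrm{tf}} \to 0$), and that the degree shift by $+1$ on the torsion part is correctly tracked through the $\mathrm{Ext}^1$ contribution. A clean alternative that avoids spectral sequences altogether: since $U$ splits (non-canonically, but canonically up to the torsion/free filtration) as a sum of shifts of $R$ and of cyclic torsion modules $R/\pi^{m}$, compute $R\mathrm{Hom}_R(R/\pi^m, R) \simeq (R/\pi^m)[-1]$ directly from the resolution $0 \to R \xrightarrow{\pi^m} R \to R/\pi^m \to 0$, and assemble; this makes the degree shift transparent. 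I would present the spectral sequence argument as the main proof and perhaps remark on the cyclic-module computation as motivation.
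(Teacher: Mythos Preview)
Your proposal is correct and is essentially the same argument as the paper's: both run the two-row $\mathrm{Ext}$ spectral sequence (using that a DVR has global dimension one) to obtain the short exact sequence, and then identify $\mathrm{Hom}_R(M,R)=\mathrm{Hom}_R(M_{\mathrm{tf}},R)$ and $\mathrm{Ext}^1_R(M,R)\cong\mathrm{Hom}_R(M_{\mathrm{tor}},K/R)$ to read off the torsion and torsion-free parts. Your write-up is in fact more detailed about canonicity and the $\mathrm{Ext}^1$ identification, but the method is identical.
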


\begin{proof}
We have an $E_2$ spectral sequence:
\[
\mathrm{Ext}^i_R(H^j(U), R) \Longrightarrow H^{i-j}(V).
\]
Since $R$ is a DVR, the terms on second page vanish unless $i \in \{0, 1\}$.
Hence the spectral sequence degenerates for degree reason, and we get
a natural short exact sequence 
\[
0 \to \mathrm{Ext}^1_R(H^i(U), R) \to H^{-i+1}(V) \to \mathrm{Hom}_R(H^{i-1}(U), R) \to 0.
\]
Now we finish by recalling that given a finitely generated $R$-module $M$, one has
canonical identifications:
\[
\mathrm{Hom}_R(M, R) = \mathrm{Hom}_R(M_{\mathrm{tf}}, R)
\text{ and }
\mathrm{Ext}^1_R(M, R) = \mathrm{Hom}_R(M_{\mathrm{tor}}, K/R).
\]
\end{proof}

\section{Consequences of recent developments in integral \(p\)-adic Hodge theory}
\label{Consequences}

In this section we concentrate ourselves in the \(p\)-adic situation.

\subsection{Notations and Setup}
\label{notations}
Throughout this section, 
let \(K\) be a complete \(p\)-adic field with a chosen uniformizer \(\pi\), 
ring of integers \(\mathcal{O}_K\)
and perfect residue field \(\kappa \coloneqq \mathcal{O}_K/(\pi)\).
Recall that \(\mathcal{O}_K\) contains the ring of Witt vectors of \(\kappa\),
and the degree of their fraction field extension \(K_0 \coloneqq \mathrm{W}(\kappa)[1/p] \subset K\)
is called ramification index of \(K\) and denoted by \(e\).
This paper concerns low ramification situation, in particular we assume that \(e \leq p-1\),
therefore the ideal \((\pi) \subset \mathcal{O}_K\) has a unique divided power structure.

Denote \(\mathfrak{S} \coloneqq \mathrm{W}(\kappa)[\![u]\!]\) with a surjection
\(\mathfrak{S} \twoheadrightarrow \mathcal{O}_K\), 
where \(u\) is sent to the chosen uniformizer \(\pi\).
The kernel of this surjection is generated by an Eisenstein polynomial \(I = (E(u))\).
Define the Frobenius \(\phi \colon \mathfrak{S} \to \mathfrak{S}\) that extends
the Frobenius on \(\mathrm{W}(\kappa)\) and sends \(u\) to \(u^p\),
since \(\mathfrak{S}\) is \(p\)-torsion free, 
this puts a unique \(\delta\)-structure on \(\mathfrak{S}\).
The pair \((\mathfrak{S},I)\) is a Breuil--Kisin type prism (see~\cite[Example 1.3 (3)]{BS19}).

Let \(\mathcal{X}\) be a smooth proper formal scheme over \(\mathcal{O}_K\).
We denote the special fiber of \(\mathcal{X}\) by 
\(\mathcal{X}_0 \coloneqq \mathcal{X} \times_{\mathcal{O}_K} \kappa\)
and the (rigid) generic fiber of \(\mathcal{X}\) by 
\(X \coloneqq \mathcal{X} \times_{\mathcal{O}_K} K\).
We call \(\mathcal{X}\) a \emph{lifting of \(\mathcal{X}_0\) over \(\mathcal{O}_K\)}.
In the case where \(e=1\), i.e.~\(\mathcal{O}_K = \mathrm{W}(\kappa)\), 
we call \(\mathcal{X}\) an \emph{unramified lift of \(\mathcal{X}_0\)}.

\subsection{Virtual Hodge Numbers}

Recall that we have a natural identification 
\(\mathrm{R\Gamma_{crys}}(\mathcal{X}_0/\mathrm{W(\kappa)}) 
\otimes^{\mathbb{L}}_{\mathrm{W}(\kappa)} \kappa 
\simeq \mathrm{R\Gamma_{dR}}(\mathcal{X}_0/\kappa)\),
which implies that we have a natural injection
\[
H^i_{\mathrm{crys}}(\mathcal{X}_0/\mathrm{W(\kappa)})/p
\hookrightarrow H^i_{\mathrm{dR}}(\mathcal{X}_0/\kappa),
\]
for all \(i\).
Therefore we may regard 
\(H^i_{\mathrm{crys}}(\mathcal{X}_0/\mathrm{W(\kappa)})_{\mathrm{tf}}/p\)
as a natural subquotient of \(H^i_{\mathrm{dR}}(\mathcal{X}_0/\kappa)\).

\begin{definition}[virtual Hodge numbers]
\label{virtual Hodge}
The Hodge filtrations on \(H^i_{\mathrm{dR}}(\mathcal{X}_0/\kappa)\)
induces natural filtrations on the subquotient 
\(H^i_{\mathrm{crys}}(\mathcal{X}_0/\mathrm{W(\kappa)})_{\mathrm{tf}}/p\).
The virtual Hodge numbers of \(\mathcal{X}_0\), is given by
\[
\mathfrak{h}^{i,j}(\mathcal{X}_0) \coloneqq
\dim_{\kappa} \mathrm{Fil}^{i} (H^{i+j}_{\mathrm{crys}}(\mathcal{X}_0/\mathrm{W(\kappa)})_{\mathrm{tf}}/p)
- \dim_{\kappa} \mathrm{Fil}^{i+1} (H^{i+j}_{\mathrm{crys}}(\mathcal{X}_0/\mathrm{W(\kappa)})_{\mathrm{tf}}/p)
\]
\end{definition}

Unwinding the definition, we have the following description of the \(i\)-th induced filtration on
\(H^{n}_{\mathrm{crys}}(\mathcal{X}_0/\mathrm{W(\kappa)})_{\mathrm{tf}}/p \):
\[
\label{complicated filtration}
\tag{\epsdice{1}}
\mathrm{Im}(\mathrm{Im}(H^n(\Omega^{\geq i}_{\mathcal{X}_0/\kappa}) \to H^n(\Omega^{\bullet}_{\mathcal{X}_0/\kappa})) 
\bigcap H^n_{\mathrm{crys}}(\mathcal{X}_0/\mathrm{W(\kappa)})/p \to 
H^n_{\mathrm{crys}}(\mathcal{X}_0/\mathrm{W(\kappa)})_{\mathrm{tf}}/p).
\]
Note that this definition \emph{only depends} on the smooth proper variety \(\mathcal{X}_0\)
in characteristic \(p\).

\begin{remark}
\label{identification remark}
It is worth pointing out that in this definition, 
we may replace the Witt vectors by any ring of integers \(\mathcal{O}_K\) as long as the ramification index \(e \leq p-1\).
By the virtue of the de Rham--crystalline comparison and the
base change formula of crystalline cohomology~\cite[Corollary 7.3 and Theorem 7.8]{BOgus},
we have natural identifications:
\begin{itemize}
\item \(
H^i_{\mathrm{crys}}(\mathcal{X}_0/\mathrm{W(\kappa)}) \otimes_{\mathrm{W(\kappa)}} \mathcal{O}_K
\cong H^i_{\mathrm{dR}}(\mathcal{X}/\mathcal{O}_K)
\),
\item \(
H^i_{\mathrm{crys}}(\mathcal{X}_0/\mathrm{W(\kappa)})_{\mathrm{tf}} \otimes_{\mathrm{W(\kappa)}} \mathcal{O}_K
\cong H^i_{\mathrm{dR}}(\mathcal{X}_0/\mathcal{O}_K)_{\mathrm{tf}}
\),
\item \(
H^i_{\mathrm{crys}}(\mathcal{X}_0/\mathrm{W(\kappa)})/p 
\cong H^i_{\mathrm{dR}}(\mathcal{X}_0/\mathcal{O}_K)/\pi
\), and
\item \(
H^i_{\mathrm{crys}}(\mathcal{X}_0/\mathrm{W(\kappa)})_{\mathrm{tf}}/p
\cong H^i_{\mathrm{dR}}(\mathcal{X}_0/\mathcal{O}_K)_{\mathrm{tf}}/\pi
\).
\end{itemize}
\end{remark}

These filtrations are only objects in characteristic \(p\).
The goal of this subsection is to show that if the integral \(p\)-adic Hodge filtration behaves nicely,
then these filtrations can tell us something about the rational Hodge filtrations.

\begin{proposition}
\label{identification of filtrations}
Let \(\mathcal{X} \to \mathrm{Spf}(\mathcal{O}_K)\) be as in Subsection~\ref{notations}.
Assume that the integral Hodge--de Rham spectral sequence of \(\mathcal{X}\) degenerates,
then we have identifications of subspaces in
\(H^n(\Omega^{\bullet}_{\mathcal{X}/\mathcal{O}_K})/\pi \cong H^n_{\mathrm{crys}}(\mathcal{X}_0/\mathrm{W(\kappa)})/p\):
\[
\xymatrix{
\mathrm{Im}(H^n(\Omega^{\geq i}_{\mathcal{X}/\mathcal{O}_K})/\pi \to H^n(\Omega^{\bullet}_{\mathcal{X}/\mathcal{O}_K})/\pi)
\ar[d]^{\cong} \\
\mathrm{Im}(H^n(\Omega^{\geq i}_{\mathcal{X}_0/\kappa}) \to H^n(\Omega^{\bullet}_{\mathcal{X}_0/\kappa})) 
\bigcap H^n_{\mathrm{crys}}(\mathcal{X}_0/\mathrm{W(\kappa)})/p.
}
\]
\end{proposition}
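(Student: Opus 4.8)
The plan is to recognize this as a direct application of~\cref{identification of sub} to the stupid (Hodge) filtration on the de Rham complex. First I would set $R = \mathcal{O}_K$ with the chosen uniformizer $\pi$, and take $U \coloneqq \Omega^{\geq i}_{\mathcal{X}/\mathcal{O}_K}$ together with its inclusion into $V \coloneqq \Omega^{\bullet}_{\mathcal{X}/\mathcal{O}_K}$. With this notation, $H^n(U) = H^n(\mathrm{Fil}^i)$ and $H^n(V) = H^n_{\mathrm{dR}}(\mathcal{X}/\mathcal{O}_K)$ in the language of~\cref{definition of SS degeneration}, so the hypothesis that the integral Hodge--de Rham spectral sequence of $\mathcal{X}$ degenerates says precisely that $H^{n+1}(U) \to H^{n+1}(V)$ is injective, which is exactly the hypothesis needed to invoke~\cref{identification of sub}. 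Applying that lemma yields an identification of subspaces in $H^n(V)/\pi$:
\[
\mathrm{Im}(H^n(U)/\pi \to H^n(V)/\pi) \cong \mathrm{Im}(H^n(U/\pi) \to H^n(V/\pi)) \bigcap H^n(V)/\pi.
\]

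Next I would identify the reductions modulo $\pi$. Since $\mathcal{X} \to \mathrm{Spf}(\mathcal{O}_K)$ is smooth, each $\Omega^j_{\mathcal{X}/\mathcal{O}_K}$ is a finite locally free $\mathcal{O}_{\mathcal{X}}$-module, and base change for Kähler differentials gives $\Omega^j_{\mathcal{X}/\mathcal{O}_K} \otimes_{\mathcal{O}_K} \kappa \cong \Omega^j_{\mathcal{X}_0/\kappa}$; because the brutal truncation $\Omega^{\geq i}$ is a subcomplex with termwise flat terms, this upgrades to derived-base-change identifications $U \otimes^{\mathbb{L}}_{\mathcal{O}_K} \kappa \simeq \Omega^{\geq i}_{\mathcal{X}_0/\kappa}$ and $V \otimes^{\mathbb{L}}_{\mathcal{O}_K} \kappa \simeq \Omega^{\bullet}_{\mathcal{X}_0/\kappa}$, hence $H^n(U/\pi) = H^n(\Omega^{\geq i}_{\mathcal{X}_0/\kappa})$ and $H^n(V/\pi) = H^n_{\mathrm{dR}}(\mathcal{X}_0/\kappa)$. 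Finally I would use~\cref{identification remark} — more precisely the crystalline--de Rham comparison together with its compatibility with reduction mod $\pi$ — to rewrite $H^n(V)/\pi$ as $H^n_{\mathrm{crys}}(\mathcal{X}_0/\mathrm{W}(\kappa))/p$, sitting inside $H^n_{\mathrm{dR}}(\mathcal{X}_0/\kappa)$ via the canonical injection recalled before~\cref{virtual Hodge}. Substituting these into the displayed identification produces exactly the claimed statement.

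The argument is essentially formal once~\cref{identification of sub} is in hand; the only point requiring genuine care — and where I would be most careful — is checking that all the identifications are compatible with the natural maps in sight. Concretely, one must verify that under the isomorphism $H^n_{\mathrm{dR}}(\mathcal{X}/\mathcal{O}_K)/\pi \cong H^n_{\mathrm{crys}}(\mathcal{X}_0/\mathrm{W}(\kappa))/p$ the inclusion $H^n(V)/\pi \hookrightarrow H^n(V/\pi) = H^n_{\mathrm{dR}}(\mathcal{X}_0/\kappa)$ coming from the universal-coefficients sequence agrees with the comparison-induced inclusion used in~\cref{virtual Hodge}, and that the reduction map $H^n(\Omega^{\geq i}_{\mathcal{X}/\mathcal{O}_K})/\pi \to H^n(\Omega^{\geq i}_{\mathcal{X}_0/\kappa})$ is compatible with the inclusions $\Omega^{\geq i} \hookrightarrow \Omega^{\bullet}$ on both the mixed-characteristic and the characteristic $p$ levels. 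These compatibilities follow from functoriality of base change and of the crystalline--de Rham comparison, but they are the part of the proof worth spelling out explicitly.
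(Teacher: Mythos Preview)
Your proposal is correct and matches the paper's own proof essentially verbatim: the paper also sets $U = \mathrm{R\Gamma}(\Omega^{\geq i}_{\mathcal{X}/\mathcal{O}_K})$, $V = \mathrm{R\Gamma}(\Omega^{\bullet}_{\mathcal{X}/\mathcal{O}_K})$, invokes \cref{identification of sub} using the degeneracy hypothesis, and then cites \cref{identification remark} to finish. Your write-up is in fact more detailed than the paper's on the base-change and compatibility checks; the only cosmetic point is that $U$ and $V$ should strictly be the derived global sections $\mathrm{R\Gamma}(-)$ rather than the sheaf complexes themselves.
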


\begin{proof}
This is a direct application of~\cref{identification of sub}.
We write $U = \mathrm{R\Gamma}(\Omega^{\geq i}_{\mathcal{X}/\mathcal{O}_K})$
and $V = \mathrm{R\Gamma}(\Omega^{\bullet}_{\mathcal{X}/\mathcal{O}_K})$.
Then the cone $C = \mathrm{R\Gamma}(\Omega^{\leq i-1}_{\mathcal{X}/\mathcal{O}_K})$.
The assumption on Hodge--de Rham spectral sequence implies the condition required in~\cref{identification of sub}.
Together with the identification spelled out in~\cref{identification remark},
we get the claimed identification.
\end{proof}

Recall the definition of a spectral sequence being saturated degenerate in~\cref{definition of SS degeneration}.
The following is an immediate consequence of~\cref{referee suggest} applying to all the Hodge filtrations
of the de Rham complex,
but let us repeat the proof one more time.

\begin{proposition}
\label{equality of numbers}
Let \(\mathcal{X} \to \mathrm{Spf}(\mathcal{O}_K)\) be as in Subsection~\ref{notations}.
Assume that the integral Hodge--de Rham spectral sequence of \(\mathcal{X}\) is saturated degenerate,
then we have equality of (virtual) Hodge numbers:
\[
\mathfrak{h}^{i,j}(\mathcal{X}_0) = h^{i,j}(X).
\]
\end{proposition}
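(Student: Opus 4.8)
The plan is to reduce the statement about Hodge numbers to a dimension count on the generic and special fibers, matching them term by term through the torsion-free reductions mod $\pi$. First I would recall that $h^{i,j}(X) = \dim_K \mathrm{Gr}^i H^{i+j}(\Omega^\bullet_{X/K})$, and that by flat base change along $\mathcal{O}_K \to K$ the Hodge filtration on $H^n_{\mathrm{dR}}(X/K)$ is computed as the image of $H^n(\Omega^{\geq i}_{\mathcal{X}/\mathcal{O}_K})[1/\pi] \to H^n(\Omega^\bullet_{\mathcal{X}/\mathcal{O}_K})[1/\pi]$. So $h^{i,j}(X)$ is the difference of dimensions of two such images, for consecutive values of the filtration index. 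On the other side, by~\cref{complicated filtration} and~\cref{identification remark}, the virtual Hodge number $\mathfrak{h}^{i,j}(\mathcal{X}_0)$ is the difference of $\kappa$-dimensions of the images
\[
\mathrm{Im}\Big(\big(\mathrm{Im}(H^n(\Omega^{\geq i}_{\mathcal{X}_0/\kappa}) \to H^n(\Omega^\bullet_{\mathcal{X}_0/\kappa}))\cap H^n_{\mathrm{crys}}(\mathcal{X}_0/\mathrm{W}(\kappa))/p\big) \to H^n_{\mathrm{crys}}(\mathcal{X}_0/\mathrm{W}(\kappa))_{\mathrm{tf}}/p\Big)
\]
for $n = i+j$ and the two consecutive indices $i$ and $i+1$.

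The bridge between these two is exactly~\cref{referee suggest}, applied with $U = \mathrm{R\Gamma}(\Omega^{\geq i}_{\mathcal{X}/\mathcal{O}_K})$ and $V = \mathrm{R\Gamma}(\Omega^\bullet_{\mathcal{X}/\mathcal{O}_K})$. I need to verify its two hypotheses for $n = i+j$. Hypothesis (2), injectivity of $H^{n+1}(U) \to H^{n+1}(V)$, is precisely part of the assumption that the Hodge--de Rham spectral sequence degenerates (the map $H^{n+1}(\mathrm{Fil}^i) \to H^{n+1}(C)$ is injective, and this map factors this one up to the identifications). Hypothesis (1), that the image of $H^n(U) \to H^n(V)_{\mathrm{tf}}$ is saturated, is exactly what \emph{saturated} degeneracy buys us: combining the injectivity of $H^n(\mathrm{Fil}^i) \to H^n(C)$ with the saturation clause of~\cref{definition of SS degeneration}(2) gives that $\mathrm{Fil}^i H^n(C)_{\mathrm{tf}} \subset H^n(C)_{\mathrm{tf}}$ is saturated, hence so is the image in question after identifying $H^n(C)$ with $H^n_{\mathrm{dR}}(\mathcal{X}/\mathcal{O}_K)$. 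Granting these, \cref{referee suggest} yields
\[
\dim_K \mathrm{Im}(H^n(U)[1/\pi] \to H^n(V)[1/\pi]) = \dim_\kappa \mathrm{Im}\big((\mathrm{Im}(H^n(U/\pi)\to H^n(V/\pi))\cap H^n(V)/\pi)\to H^n(V)_{\mathrm{tf}}/\pi\big).
\]
Using $U/\pi \simeq \mathrm{R\Gamma}(\Omega^{\geq i}_{\mathcal{X}_0/\kappa})$, $V/\pi \simeq \mathrm{R\Gamma}(\Omega^\bullet_{\mathcal{X}_0/\kappa})$ (the Hodge filtration is termwise $\mathcal{O}_K$-flat, so reduction commutes with $\mathrm{R\Gamma}$), and the comparison $H^n(V)/\pi \cong H^n_{\mathrm{crys}}(\mathcal{X}_0/\mathrm{W}(\kappa))/p$, $H^n(V)_{\mathrm{tf}}/\pi \cong H^n_{\mathrm{crys}}(\mathcal{X}_0/\mathrm{W}(\kappa))_{\mathrm{tf}}/p$ from~\cref{identification remark}, the right-hand side is precisely the dimension of the $i$-th piece of the filtration defining $\mathfrak{h}^{i,j}$.

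Finally I would assemble the pieces: apply the above equality for the filtration index $i$ and for $i+1$, with $n = i+j$ fixed, and subtract. The left-hand sides subtract to $\dim_K \mathrm{Gr}^i H^{i+j}_{\mathrm{dR}}(X/K) = h^{i,j}(X)$, using that the rational Hodge--de Rham spectral sequence degenerates so that $\mathrm{Gr}^i$ is the quotient of the two consecutive images; the right-hand sides subtract to $\mathfrak{h}^{i,j}(\mathcal{X}_0)$ by~\cref{virtual Hodge} and~\cref{complicated filtration}. The main obstacle, and the only point requiring genuine care, is the bookkeeping of hypothesis (1) of~\cref{referee suggest}: one must check that saturated degeneracy of the \emph{spectral sequence} in the sense of~\cref{definition of SS degeneration} translates correctly into the saturation of the image of $H^n(\Omega^{\geq i}_{\mathcal{X}/\mathcal{O}_K})$ inside $H^n_{\mathrm{dR}}(\mathcal{X}/\mathcal{O}_K)_{\mathrm{tf}}$ for every $i$ and $n$ simultaneously — i.e.\ that the cone description $C = \mathrm{R\Gamma}(\Omega^{\leq i-1})$ and the filtration identifications line up. Everything else is flat base change and the already-proven lemmas.
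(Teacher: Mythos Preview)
Your proposal is correct and takes essentially the same approach as the paper; indeed the paper itself remarks, just before its proof, that the proposition is an immediate consequence of~\cref{referee suggest} applied to all the Hodge filtrations of the de Rham complex. The paper then unpacks this by invoking~\cref{identification of filtrations} (a special case of~\cref{identification of sub}) followed by~\cref{reading off dimension from special fiber}, which is exactly the content of~\cref{referee suggest}, so your direct appeal to the packaged lemma is what the paper advertises and then redoes by hand.
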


\begin{proof}
According to the definitions, 
we need to show that the dimension of \(i\)-th filtration on 
\(H^n_{\mathrm{crys}}(\mathcal{X}_0/\mathrm{W(\kappa)})_{\mathrm{tf}}/p
\cong H^n_{\mathrm{crys}}(\mathcal{X}_0/\mathcal{O}_K)_{\mathrm{tf}}/\pi\)
agrees with the dimension of \(i\)-th filtration on \(H^n_{\mathrm{dR}}(X)\).

By~\cref{identification of filtrations}, 
we may rewrite the formula~\ref{complicated filtration} of the \(i\)-th filtration on 
\(H^n_{\mathrm{crys}}(\mathcal{X}_0/\mathcal{O}_K)_{\mathrm{tf}}/\pi
\cong H^n(\Omega^{\bullet}_{\mathcal{X}/\mathcal{O}_K})_{\mathrm{tf}}/\pi\) by
\[
\mathrm{Im}\Big(H^n(\Omega^{\geq i}_{\mathcal{X}/\mathcal{O}_K})/\pi \to H^n(\Omega^{\bullet}_{\mathcal{X}/\mathcal{O}_K})/\pi
\to H^n(\Omega^{\bullet}_{\mathcal{X}/\mathcal{O}_K})_{\mathrm{tf}}/\pi \Big).
\]
Our assumption implies that the submodule
\(H^n(\Omega^{\geq i}_{\mathcal{X}/\mathcal{O}_K}) 
\subset H^n(\Omega^{\bullet}_{\mathcal{X}/\mathcal{O}_K})\)
meets the condition of~\cref{reading off dimension from special fiber}.
Hence we may apply~\cref{reading off dimension from special fiber} 
which gives us the claimed equality of dimensions of filtrations.
\end{proof}

\subsection{Main Theorem}

In this subsection, we explain the proof of the Main~\cref{main theorem},
which we repeat below:

\begin{theorem}
Let \(\mathcal{X} \to \mathrm{Spf}(\mathcal{O}_K)\) be a smooth proper formal scheme.
Assume that
\begin{enumerate}
    \item there is a lift of \(\mathcal{X}\) over \(\mathfrak{S}/(E^2)\); and
    \item the relative dimension of \(\mathcal{X}\) 
    and the ramification index satisfy the inequality: 
    \((\dim \mathcal{X}_0 + 1) \cdot e < p-1\).
\end{enumerate}
Then the Hodge--de Rham spectral sequence for \(\mathcal{X}\) is split degenerate.
In particular, we have equality of (virtual) Hodge numbers:
\[
\mathfrak{h}^{i,j}(\mathcal{X}_0) = h^{i,j}(X).
\]
\end{theorem}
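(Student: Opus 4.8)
The plan is to deduce split degeneration of the Hodge--de Rham spectral sequence of $\mathcal{X}$ from the analogous statement for prismatic cohomology, using the Breuil--Kisin prism $(\mathfrak{S}, I)$ and the Nygaard-type filtration. First I would invoke the prismatic comparison theorem of Bhatt--Scholze~\cite{BS19}: the de Rham complex $\mathrm{R\Gamma}(\Omega^\bullet_{\mathcal{X}/\mathcal{O}_K})$ together with its Hodge filtration is recovered from the Breuil--Kisin prismatic cohomology $\mathrm{R\Gamma}_{\Prism}(\mathcal{X}/\mathfrak{S})$ by base change along $\mathfrak{S} \twoheadrightarrow \mathfrak{S}/I = \mathcal{O}_K$, with the Hodge filtration coming from the Nygaard filtration (or from a suitable filtered refinement of this comparison). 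The key input from condition (1) is that a lift of $\mathcal{X}$ over $\mathfrak{S}/(E^2) = \mathfrak{S}/I^2$ exists; this is exactly the kind of second-order lifting datum that, in the Deligne--Illusie strategy, produces a splitting. The key input from condition (2), the inequality $(\dim \mathcal{X}_0 + 1)\cdot e < p-1$, controls the range of degrees and the $p$-adic denominators appearing in the divided-power / Frobenius-twist arguments, ensuring that the relevant torsion stays small enough that no $d_r$ differential can be nonzero and that the filtration jumps split off.

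The main steps, in order, would be: (i) set up the filtered prismatic complex and its comparison to the filtered de Rham complex, recording that $H^n_{\mathrm{dR}}(\mathcal{X}/\mathcal{O}_K) \cong H^n_{\mathrm{crys}}(\mathcal{X}_0/\mathrm{W}(\kappa)) \otimes_{\mathrm{W}(\kappa)} \mathcal{O}_K$ as in~\cref{identification remark}; (ii) use the $\mathfrak{S}/I^2$-lift together with the result of Min~\cite{Min19} on the structure of Breuil--Kisin cohomology to show that the Hodge--Tate (conjugate) spectral sequence, hence also the Hodge--de Rham spectral sequence, has no room for differentials in the degree range $\leq \dim \mathcal{X}_0 + 1$ forced by condition (2) --- this gives degeneration in the sense of~\cref{definition of SS degeneration}(1); (iii) upgrade degeneration to \emph{split} degeneration: by~\cref{characterizing degenerations}(2) it suffices to exhibit an abstract isomorphism $H^n_{\mathrm{dR}}(\mathcal{X}/\mathcal{O}_K)_{\mathrm{tor}} \simeq \bigoplus_j H^n(\mathrm{Gr}^j_{\mathrm{Hodge}})_{\mathrm{tor}}$ for all $n$, which I would obtain from the $\mathfrak{S}/I^2$-lift by running a Deligne--Illusie-type splitting of the de Rham complex (the lift provides the map splitting the truncation $\tau^{\leq 1}$, and condition (2) guarantees the Cartier isomorphism is available and that the Frobenius descent loses no torsion in the relevant range); (iv) conclude the equality of Hodge numbers by applying~\cref{equality of numbers}, since split degenerate implies saturated degenerate.

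The hard part will be step (iii), the passage from mere degeneration to a \emph{splitting over $\mathcal{O}_K$} of the integral Hodge filtration --- this is precisely where ramification could a priori obstruct things, as the example in the second theorem of the introduction shows that for larger $e$ (relative to dimension) the filtration genuinely fails to split. The point is to check carefully that under $(\dim \mathcal{X}_0 + 1)\cdot e < p-1$ the divided powers in play have no denominators, so that the $\mathfrak{S}/I^2$-lift yields a genuine chain-level splitting of $\tau^{\leq \dim \mathcal{X}_0} \mathrm{R\Gamma}(\Omega^\bullet_{\mathcal{X}/\mathcal{O}_K})$ into its cohomology sheaves with their Hodge filtrations intact, and then feed the resulting abstract module isomorphism into~\cref{characterizing degenerations}(2) via~\cref{split multifiltration corollary}. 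I would also double-check that the relevant cohomology lives in degrees $\leq 2\dim \mathcal{X}_0$ so that the bound in (2) suffices --- using Poincaré duality (as the referee suggested, cf.~\cref{duality for perfect complex}) to handle the upper half of the range once the lower half is established.
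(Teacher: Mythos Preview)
Your outline assembles the right ingredients but misidentifies the mechanism by which the $\mathfrak{S}/(E^2)$-lift produces a splitting on the de Rham side. The lift does \emph{not} split the de Rham complex; what it splits (via~\cref{prisms}(3) and Deligne--Illusie anti-symmetrization) is the Hodge--Tate complex $R\nu_*\overline{\mathcal{O}}_{\Prism}$, giving $\bigoplus_i \Omega^i_{\mathcal{X}/\mathcal{O}_K}\{-i\}[-i] \simeq R\nu_*\overline{\mathcal{O}}_{\Prism}$. This makes the Hodge--Tate spectral sequence split degenerate in all degrees. The passage to the Hodge--de Rham side is then indirect and purely module-theoretic: Min's theorem (\cref{structure of prismatic}) says that for $i \cdot e < p-1$ the group $H^i_{\Prism}(\mathcal{X}/\mathfrak{S})$ is a sum of copies of $\mathfrak{S}$ and $\mathfrak{S}/p^{n_j}$; such modules are Tor-independent with both $\mathcal{O}_K$ and $\phi_*\mathcal{O}_K$, so one obtains an \emph{abstract} isomorphism $H^i_{\mathrm{dR}}(\mathcal{X}/\mathcal{O}_K) \simeq H^i_{\mathrm{HT}}(\mathcal{X}/\mathcal{O}_K)$ of $\mathcal{O}_K$-modules for $i < T$ (this is~\cref{equivalence of SS}). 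Since the two spectral sequences have the same starting pages up to a Breuil--Kisin twist, the criterion in~\cref{definition of torsion degeneration}/\cref{characterizing degenerations}(2) then transfers split-degenerate-torsion from Hodge--Tate to Hodge--de Rham in degrees $< T$.

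So your step (iii) as written---a ``chain-level splitting of $\tau^{\leq \dim \mathcal{X}_0}\mathrm{R\Gamma}(\Omega^\bullet_{\mathcal{X}/\mathcal{O}_K})$''---does not happen and is not needed; there is no direct splitting of the filtered de Rham complex, only an abstract isomorphism of cohomology modules coming from the shape of prismatic cohomology. Relatedly, the Nygaard filtration is a red herring here: the relevant filtration on $\mathrm{R\Gamma}_{\Prism}(\mathcal{X}/\mathfrak{S})\otimes^{\mathbb{L}}_{\mathfrak{S}}\mathcal{O}_K$ is the conjugate filtration of~\cref{prisms}(2), and the Hodge filtration on de Rham cohomology is compared to it only through this abstract module isomorphism, not through a filtered comparison. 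Your use of Poincar\'e and Serre duality with~\cref{duality for perfect complex} to cover degrees $\geq 2d+2-T$ once degrees $\leq T-1$ are handled is exactly right, and condition (2) in the form $d+1 \leq T$ is what makes the two ranges cover all integers.
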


\begin{remark}
\leavevmode
\begin{enumerate}
\item After modulo \((u)\), the surjection 
\(\mathfrak{S}/(E^2) \twoheadrightarrow \mathcal{O}_K\)
becomes \(\mathrm{W}_2(\kappa) \twoheadrightarrow \kappa\)
(note that \(E\) is an Eisenstein polynomial in \(u\)).
So we may view this Theorem as a mixed-characteristic analogue of a theorem by 
Deligne--Illusie~\cite[Corollaire 2.4]{DI87}.
\item Our result implies that a smooth proper variety
\(\mathcal{X}_0\) in positive characteristic knows Hodge numbers of the generic fiber
of a (formal) lifting provided:
(i) the lifting can be further lifted to \(\mathfrak{S}/(E^2)\); and
(ii) we have an inequality \((\dim \mathcal{X}_0 + 1) \cdot e < p-1\).
\item The condition~(1) is not so easy to verify.
There are two cases we can think of in which this condition is automatically guaranteed.
The first being that \(\mathcal{X}\) is an unramified lift 
(for then the surjection 
\(\mathfrak{S} \twoheadrightarrow \mathcal{O}_K\) admits a section), 
in which case a stronger statement follows from the work of 
Fontaine--Messing~\cite{FM87} or Kato~\cite{K87}, see~\cref{FM remark}.
The second case is when \(\mathcal{X}_0\) has unobstructed deformation theory, 
e.g.~when \(H^2(\mathcal{X}_0, \mathrm{T}) = 0\), by deformation theoretic considerations.
\item Similar to the situation of Deligne--Illusie's statement,
there are examples showing the necessity of condition~(1).
In fact one example comes from (ramified) liftings of 
(counter-)examples to Deligne--Illusie's statement in characteristic \(p\)
due to Antieau--Bhatt--Mathew~\cite{ABM19}, see~\cref{lifting ABM}
and more precisely~\cref{main example}.
\end{enumerate}
\end{remark}

The main ingredients that go into the proof are some
recent developments in
integral \(p\)-adic Hodge theory. 
So we shall postpone the proof until after the introduction of these ingredients.

Recently Bhatt--Scholze~\cite{BS19} developed prismatic cohomology theory
to unite many (if not all) known \(p\)-adic cohomology theories one may attach to
a \(p\)-adic smooth formal scheme over a certain class of \(p\)-adic base rings.
While their theory is in a much broader context,
we shall specialize their results to the situation of interest for this paper:
they introduced prismatic site \((\mathcal{X}/\mathfrak{S})_{\Prism}\)
on which there are two structure sheaves \(\mathcal{O}_{\Prism}\)
and \(\overline{\mathcal{O}}_{\Prism} \coloneqq
\mathcal{O}_{\Prism}/E\).
The cohomology of \(\mathcal{O}_{\Prism}\) on \((\mathcal{X}/\mathfrak{S})_{\Prism}\)
is denoted by \(\mathrm{R\Gamma_{\Prism}}(\mathcal{X}/\mathfrak{S})\).
There is a natural map of ringed topoi (see~\cite[Construction 4.4]{BS19}):
\[
\nu \colon \mathrm{Shv}((\mathcal{X}/\mathfrak{S})_{\Prism}, \overline{\mathcal{O}}_{\Prism})
\to \mathrm{Shv}(\mathcal{X}_{\mathrm{\acute{e}t}}, \mathcal{O}_{\mathcal{X}}).
\]
The properties we need of these objects are summarized in the following:

\begin{theorem}[Bhatt--Scholze]
\label{prisms}
\leavevmode
\begin{enumerate}
    \item (see~\cite[Corollary 15.4]{BS19}) 
    There is a canonical isomorphism
    \[
    \mathrm{R\Gamma_{dR}}(\mathcal{X}/\mathcal{O}_K) \cong
    \mathrm{R\Gamma_{\Prism}}(\mathcal{X}/\mathfrak{S})
    \otimes^{\mathbb{L}}_{\mathfrak{S}} \phi_*\mathcal{O}_K,
    \]
    where \(\phi_*\mathcal{O}_K\) is \(\mathcal{O}_K\)
    viewed as an \(\mathfrak{S}\)-module (or even algebra) via the composite
    \(\mathfrak{S} \xrightarrow{\phi} \mathfrak{S} \to 
    \mathcal{O}_K = \mathfrak{S}/(E)\).
    \item (see~\cite[Theorem 4.10]{BS19})
    There is a canonical isomorphism
    \[
    \Omega^i_{\mathcal{X}/\mathcal{O}_K}\{-i\} \cong
    R^i\nu_*\overline{\mathcal{O}}_{\Prism}.
    \]
    Here \((-)\{j\} \coloneqq (-) \otimes_{\mathcal{O}_K} ((E)/(E^2))^{\otimes j}\).
    This induces an increasing filtration (called the conjugate filtration) on 
    \(\mathrm{R\Gamma_{\Prism}}(\mathcal{X}/\mathfrak{S})
    \otimes^{\mathbb{L}}_{\mathfrak{S}} \mathcal{O}_K\) giving rise to
    an \(E_2\) spectral sequence (called the Hodge--Tate spectral sequence):
    \[
    \label{HT SS}
    \tag{\epsdice{2}}
    E_2^{i,j} = 
    H^i(\mathcal{X}, \Omega^j_{\mathcal{X}/\mathcal{O}_K})\{-j\}
    \Longrightarrow H^{i+j}_{\mathrm{HT}}(\mathcal{X}/\mathcal{O}_K) \coloneqq 
    H^{i+j}(\mathrm{R\Gamma_{\Prism}}(\mathcal{X}/\mathfrak{S})
    \otimes^{\mathbb{L}}_{\mathfrak{S}} \mathcal{O}_K).
    \]
    \item (see~\cite[Remark 4.13 and Proposition 4.14]{BS19}, c.f.~\cite[Proposition 3.2.1]{ALB19})
    The map 
    \[
    \mathcal{O}_{\mathcal{X}} \to 
    \tau^{\leq 1}R\nu_*\overline{\mathcal{O}}_{\Prism}
    \]
    splits if and only if \(\mathcal{X}\) lifts to \(\mathfrak{S}/(E^2)\).
\end{enumerate}
\end{theorem}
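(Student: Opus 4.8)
The three assertions are recollections from Bhatt--Scholze, so the plan is to specialize the prismatic formalism to the Breuil--Kisin prism \((\mathfrak{S}, (E))\) and assemble their results; below I sketch the architecture I would follow. The logical order I would adopt is to establish the Hodge--Tate comparison (2) first, deduce the lifting criterion (3) from it by a deformation-theoretic identification of an extension class, and finally obtain the de Rham comparison (1) from the crystalline comparison.

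For (2) I would argue locally. Cover \(\mathcal{X}\) by smooth affine opens \(\mathrm{Spf}(R)\) carrying a framing, i.e.\ an \'etale map to a formal torus over \(\mathcal{O}_K\); choose a smooth lift of the framed coordinates to a \(\delta\)-\(\mathfrak{S}\)-algebra and compute \(\mathrm{R\Gamma_{\Prism}}(R/\mathfrak{S})\) through the resulting prismatic envelope (equivalently, a \(q\)-de Rham complex). Reducing modulo \(E\) and computing \(R\nu_*\overline{\mathcal{O}}_{\Prism}\) via the Hodge--Tate comparison map built from the Bockstein of the extension \(0 \to \overline{\mathcal{O}}_{\Prism} \to \mathcal{O}_{\Prism}/E^2 \to \overline{\mathcal{O}}_{\Prism} \to 0\) together with the universal \(d\log\), one identifies \(R^i\nu_*\overline{\mathcal{O}}_{\Prism}\) with \(\Omega^i_{\mathcal{X}/\mathcal{O}_K}\{-i\}\), the Breuil--Kisin twist by the conormal line \((E)/(E^2)\). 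Descent then glues the local computations into the sheaf-level isomorphism, and the Postnikov (conjugate) filtration on \(\mathrm{R\Gamma_{\Prism}}(\mathcal{X}/\mathfrak{S}) \otimes^{\mathbb{L}}_{\mathfrak{S}} \mathcal{O}_K\) produces the asserted \(E_2\) spectral sequence. The delicate point, and what I expect to be the main obstacle, is tracking the twist \(\{-i\}\) correctly through the framing: the factor \((E)/(E^2)\) is exactly the conormal module of the prism, and keeping the identification canonical, i.e.\ independent of the chosen lift of coordinates, is where the real bookkeeping lies.

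For (3) I would feed (2) into the truncation. By (2) the cone of \(\mathcal{O}_{\mathcal{X}} \to \tau^{\leq 1}R\nu_*\overline{\mathcal{O}}_{\Prism}\) is \(\Omega^1_{\mathcal{X}/\mathcal{O}_K}\{-1\}[-1]\), so the obstruction to splitting is a class in \(\mathrm{Ext}^1_{\mathcal{O}_{\mathcal{X}}}(\Omega^1_{\mathcal{X}/\mathcal{O}_K}\{-1\}, \mathcal{O}_{\mathcal{X}})\). I would then identify this class with the deformation-theoretic obstruction: lifting \(\mathcal{X}\) along the square-zero thickening \(\mathfrak{S}/(E^2) \twoheadrightarrow \mathcal{O}_K\), whose ideal is \((E)/(E^2)\), is governed by a class in \(\mathrm{Ext}^1(\mathbb{L}_{\mathcal{X}/\mathcal{O}_K}, \mathcal{O}_{\mathcal{X}} \otimes_{\mathcal{O}_K} (E)/(E^2))\), which for smooth \(\mathcal{X}\) is the same group. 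Matching the two classes, the content of \cite[Proposition 4.14]{BS19} and \cite[Proposition 3.2.1]{ALB19}, yields the equivalence: the splitting exists if and only if the lift exists.

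For (1) I would invoke the crystalline comparison. After base change along the Frobenius-twisted reduction \(\mathfrak{S} \xrightarrow{\phi} \mathfrak{S} \twoheadrightarrow \mathcal{O}_K = \mathfrak{S}/(E)\), the complex \(\mathrm{R\Gamma_{\Prism}}(\mathcal{X}/\mathfrak{S})\) computes the crystalline cohomology of the special fiber over an appropriate divided-power base, and the crystalline--de Rham comparison then returns \(\mathrm{R\Gamma_{dR}}(\mathcal{X}/\mathcal{O}_K)\), giving the canonical isomorphism. Here the role of \(\phi\) is essential: it is precisely what distinguishes the de Rham specialization, via \(\phi\) then modulo \(E\), from the Hodge--Tate specialization, modulo \(E\) directly, used in (2).
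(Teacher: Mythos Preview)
The paper does not prove this theorem at all: it is stated as a recollection of results due to Bhatt--Scholze, with each part carrying an explicit citation to \cite{BS19} (and \cite{ALB19} for part (3)), and no proof environment follows. So strictly speaking there is nothing to compare against on the paper's side.

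Your sketch is a reasonable outline of how the cited results are actually established in \cite{BS19}, and the logical dependencies you describe (local computation via framings for the Hodge--Tate comparison, identification of the extension class with the deformation obstruction for the lifting criterion, crystalline comparison for the de Rham specialization) are faithful to the architecture there. But for the purposes of this paper, all of that is unnecessary: the statement is used as a black box, and the appropriate ``proof'' is simply the parenthetical references already embedded in the statement. If you were writing this section, you would omit any proof and rely on the citations, exactly as the paper does.
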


Lastly we need a result of Min~\cite{Min19} concerning the \(\mathfrak{S}\)-module structure
of \(H^i_{\Prism}(\mathcal{X}/\mathfrak{S})\) in
the case of small \(i\) and low ramification index:

\begin{theorem}[{Min~\cite[Theorem 5.8]{Min19}}]
\label{structure of prismatic}
When \(i \cdot e < p-1\), we have an abstract isomorphism of \(\mathfrak{S}\)-modules:
\[
H^i_{\Prism}(\mathcal{X}/\mathfrak{S}) \cong
\mathfrak{S}^{n} \oplus \bigoplus_{j \in J} \mathfrak{S}/p^{n_j},
\]
where \(J\) is a finite set.
\end{theorem}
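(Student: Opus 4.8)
Set \(M \coloneqq H^i_{\Prism}(\mathcal{X}/\mathfrak{S})\). By~\cite{BS19} the complex \(\mathrm{R\Gamma}_{\Prism}(\mathcal{X}/\mathfrak{S})\) is a perfect complex of \(\mathfrak{S}\)-modules, so \(M\) is finitely generated over the two-dimensional regular local ring \(\mathfrak{S} = \mathrm{W}(\kappa)[\![u]\!]\); moreover the divided Frobenius equips \(M\) with an \(\mathfrak{S}\)-linear map \(\phi^{*}M \to M\) which becomes an isomorphism after inverting \(E\), with \(E\)-height bounded in terms of \(i\). The plan is to reduce the statement to two assertions: (A) the torsion submodule \(T \coloneqq M_{\mathrm{tor}}\) is annihilated by a power of \(p\) and is \(u\)-torsion-free; and (B) the torsion-free quotient \(F \coloneqq M_{\mathrm{tf}}\) is a free \(\mathfrak{S}\)-module. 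Granting (B) one has \(\mathrm{Ext}^1_{\mathfrak{S}}(F, T) = 0\), so the tautological sequence \(0 \to T \to M \to F \to 0\) splits and \(M \cong F \oplus T\). Granting (A), \(T\) is a \(u\)-torsion-free torsion Breuil--Kisin module over \(\mathfrak{S}/p^{N}\) of \(E\)-height bounded by \(i\), and \(ie < p-1\) is precisely the range in which the structure theory of such modules forces a decomposition \(T \cong \bigoplus_{j} \mathfrak{S}/p^{n_j}\). Combining the two gives the theorem.

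For (B) I would show that \(F\) is reflexive and then invoke that over a two-dimensional regular local ring a finitely generated reflexive module is free (it is a maximal Cohen--Macaulay module over a regular ring). Reflexivity of \(F\) should follow from Poincar\'{e} duality for the perfect complex \(\mathrm{R\Gamma}_{\Prism}(\mathcal{X}/\mathfrak{S})\): dualizing into \(\mathfrak{S}\) and running the \(\mathrm{Ext}\)-spectral sequence identifies the possible failure of freeness of \(F\), and the presence of finite-length submodules of \(M\), with torsion phenomena that are ruled out by (A); alternatively one may invoke Kisin's freeness criterion for finite \(E\)-height \(u\)-torsion-free \(\mathfrak{S}\)-modules.

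The heart of the matter is (A), and this is where the hypothesis \(ie < p-1\) does its work. First, using the comparison isomorphisms together with the Frobenius isogeny \(\phi^{*}M[1/E] \xrightarrow{\ \sim\ } M[1/E]\), one shows that the support of \(T\) is contained in \(V(p)\). To rule out \(u\)-torsion inside the \(p\)-power torsion, I would reduce modulo \(E\) and exploit the Hodge--Tate comparison of~\cref{prisms}(2): via the conjugate filtration one compares the length of the torsion of \(H^i_{\Prism}(\mathcal{X}/\mathfrak{S}) \otimes_{\mathfrak{S}} \mathcal{O}_K\) against the sum of the lengths of the torsions of the graded pieces \(H^{j}(\mathcal{X}, \Omega^{i-j}_{\mathcal{X}/\mathcal{O}_K})\{j-i\}\) --- exactly the bookkeeping of~\cref{filtered module lemma} and~\cref{characterizing degenerations} --- together with the analogous input in degree \(i+1\) and an induction on \(i\); under \(ie < p-1\) this leaves no room for \(u\)-torsion. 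The main obstacle, therefore, is precisely this last point: extracting from \(ie < p-1\) that prismatic cohomology in this range carries no ``exotic'' torsion beyond the \(p\)-power, \(u\)-torsion-free part. That the bound is genuinely needed is witnessed by the examples of Antieau--Bhatt--Mathew and, in the present mixed-characteristic setting, by~\cref{main example}.
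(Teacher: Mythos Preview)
The paper does not prove this statement at all: it is quoted verbatim as Min's theorem~\cite[Theorem 5.8]{Min19} and used as a black box. So there is no ``paper's own proof'' to compare against; your plan is an attempt to reproduce Min's argument rather than the present paper's.

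As a sketch of Min's theorem your outline has the right shape (split into torsion and torsion-free parts, show each is of the claimed form, then split the extension), but both of your key steps have genuine gaps. For (B), you propose to use Poincar\'{e} duality for prismatic cohomology over \(\mathfrak{S}\); the paper itself flags this as unavailable at the time of writing (see the Remark immediately after the proof of~\cref{main theorem}: ``An ongoing project of Bhatt--Lurie establishes a duality statement for prismatic cohomology''), so you cannot appeal to it, and ``Kisin's freeness criterion'' is not a substitute without further argument. For (A), your proposed mechanism---reducing modulo \(E\), invoking the Hodge--Tate comparison, and then applying~\cref{filtered module lemma} and~\cref{characterizing degenerations}---does not do what you claim. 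Those results concern filtered modules over a DVR and compare lengths of torsion in a filtration; they say nothing about the absence of \(u\)-torsion in a module over the two-dimensional ring \(\mathfrak{S}\), and the inequality \(ie<p-1\) never enters your argument in any concrete way. Min's actual proof goes through the \'{e}tale comparison~\cite[Theorem 1.8.(4)]{BS19} and the structure theory of torsion Breuil--Kisin modules of bounded height due to Liu, which is where the Fontaine--Laffaille-type bound \(ie<p-1\) genuinely appears; none of this is visible in your plan.
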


Using the result of Min and our analysis of spectral sequences in the previous section,
we may relate the behavior of the Hodge--de Rham and the Hodge--Tate spectral sequences.
Let $T$ be the largest integer with $T \cdot e < p-1$, which is the threshold given by
Min's Theorem above.

\begin{corollary}
\label{equivalence of SS}
Let \(i \) be an integer which is strictly smaller than $T$, we have two equivalences:
\begin{enumerate}
    \item The Hodge--de Rham spectral sequence having saturated degenerate torsion in degree $i$
    is equivalent to the Hodge--Tate spectral sequence having saturated degenerate torsion in degree $i$; and
    \item The Hodge--de Rham spectral sequence having split degenerate torsion in degree $i$
    is equivalent to the Hodge--Tate spectral sequence having split degenerate torsion in degree $i$.
\end{enumerate}
\end{corollary}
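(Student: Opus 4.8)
The plan is to leverage the two comparison isomorphisms supplied by Bhatt--Scholze together with Min's structure theorem, viewing both spectral sequences as instances of the abstract machinery of Section 2. First I would set up the two filtered complexes: the de Rham complex $(\mathrm{R\Gamma}(\Omega^\bullet_{\mathcal{X}/\mathcal{O}_K}), \mathrm{Fil}^\bullet_{\mathrm{Hodge}})$ over the DVR $\mathcal{O}_K$, whose associated spectral sequence is the Hodge--de Rham one; and the complex $\mathrm{R\Gamma_{\Prism}}(\mathcal{X}/\mathfrak{S}) \otimes^{\mathbb{L}}_{\mathfrak{S}} \mathcal{O}_K$ with its conjugate filtration from \cref{prisms}(2), whose spectral sequence is the Hodge--Tate one \eqref{HT SS}. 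The graded pieces of the two are, up to the Breuil--Kisin twists $\{-j\}$, literally $H^a(\mathcal{X}, \Omega^b_{\mathcal{X}/\mathcal{O}_K})$ in both cases; since a twist $(-)\{j\}$ is an invertible $\mathcal{O}_K$-module, it does not change lengths of torsion or isomorphism type after the relevant operations, so the quantities $\sum_j \mathrm{length}(H^i(\mathrm{Gr}^j)_{\mathrm{tor}})$ agree on the two sides, and likewise the abstract isomorphism type $\bigoplus_j H^i(\mathrm{Gr}^j)_{\mathrm{tor}}$ is the same for both.

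Next I would observe that both spectral sequences degenerate after inverting $p$: for Hodge--de Rham this is \cite[Corollary 1.8]{Sch13}, and for Hodge--Tate this is \cite[Theorem 1.7]{BMS1} (both already cited in the excerpt). Hence \cref{characterizing torsion degenerations} applies verbatim to each: having saturated (resp.\ split) degenerate torsion in degree $i$ is a statement purely about $\mathrm{length}(H^i(C)_{\mathrm{tor}})$ (resp.\ the isomorphism type of $H^i(C)_{\mathrm{tor}}$) versus the common graded quantity. So the corollary reduces to comparing $H^i_{\mathrm{dR}}(\mathcal{X}/\mathcal{O}_K)_{\mathrm{tor}}$ with $H^i_{\mathrm{HT}}(\mathcal{X}/\mathcal{O}_K)_{\mathrm{tor}}$ in the range $i < T$. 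This is exactly where Min's \cref{structure of prismatic} enters: for $i \cdot e < p-1$, $H^i_{\Prism}(\mathcal{X}/\mathfrak{S}) \cong \mathfrak{S}^n \oplus \bigoplus_{j} \mathfrak{S}/p^{n_j}$. Using \cref{prisms}(1), $H^i_{\mathrm{dR}}(\mathcal{X}/\mathcal{O}_K)$ is computed from $H^*_{\Prism}(\mathcal{X}/\mathfrak{S}) \otimes^{\mathbb{L}}_{\mathfrak{S}} \phi_* \mathcal{O}_K$ via a base-change spectral sequence; because $\mathfrak{S}$ is regular of dimension $2$ and $\phi_*\mathcal{O}_K = \mathfrak{S}/\phi(E)$, the only $\mathrm{Tor}$ that can contribute is $\mathrm{Tor}_0$ and $\mathrm{Tor}_1$, and in the range $i-1 < T$ as well (which holds since $i < T$) both $H^i_{\Prism}$ and $H^{i-1}_{\Prism}$ have Min's shape, so one reads off $H^i_{\mathrm{dR}}(\mathcal{X}/\mathcal{O}_K) \cong (\mathcal{O}_K)^n \oplus \bigoplus \mathcal{O}_K/\pi^{(\cdots)} \oplus (\text{a }\mathrm{Tor}_1\text{ term from }H^{i-1})$. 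Likewise $H^i_{\mathrm{HT}}$ is computed from $H^*_{\Prism} \otimes^{\mathbb{L}}_{\mathfrak{S}} \mathcal{O}_K = H^*_{\Prism} \otimes^{\mathbb{L}}_{\mathfrak{S}} \mathfrak{S}/E$, with the same $\mathrm{Tor}_0, \mathrm{Tor}_1$ analysis. Since $\phi(E)$ and $E$ are both Eisenstein (distinguished elements of the prism), reduction of a finitely generated $\mathfrak{S}$-module of Min's shape modulo either one gives the same isomorphism class of $\mathcal{O}_K$-module — and the same torsion lengths — so $H^i_{\mathrm{dR}}$ and $H^i_{\mathrm{HT}}$ are abstractly isomorphic as $\mathcal{O}_K$-modules, hence have equal torsion length and equal torsion isomorphism type.

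Putting the pieces together: for $i < T$ the modules $H^i_{\mathrm{dR}}(\mathcal{X}/\mathcal{O}_K)$ and $H^i_{\mathrm{HT}}(\mathcal{X}/\mathcal{O}_K)$ are abstractly isomorphic, the graded quantities $\sum_j \mathrm{length}(H^i(\mathrm{Gr}^j)_{\mathrm{tor}})$ and $\bigoplus_j H^i(\mathrm{Gr}^j)_{\mathrm{tor}}$ agree on the two sides, and both spectral sequences degenerate rationally; therefore the criterion of \cref{definition of torsion degeneration} for degree $i$ holds on one side precisely when it holds on the other, giving both equivalences (1) and (2). The main obstacle I anticipate is the bookkeeping of the two base-change spectral sequences — making sure that in degree $i$ the contributions genuinely only involve $H^i_{\Prism}$ and $H^{i-1}_{\Prism}$, that both of these fall within Min's threshold (which forces the hypothesis $i < T$ rather than $i \le T$), and that the Breuil--Kisin twists $\{-j\}$ are harmless; once that is pinned down, the argument is a direct application of the Section 2 formalism. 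Care is also needed to check that reducing a module of the form $\mathfrak{S}/p^m$ modulo a distinguished element $E$ (or $\phi(E)$) yields $\mathcal{O}_K/\pi^{em}$ (or the $\mathrm{Tor}_1$ analogue), so that the two sides match on the nose; this is where one uses that $E$ is Eisenstein of degree $e$.
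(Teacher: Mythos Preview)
Your approach is essentially the paper's: compare the abutments via Min's structure theorem plus the two base-change short exact sequences, observe that the first pages agree abstractly (up to harmless twists and a swap of bidegree), note rational degeneration on both sides, and then invoke \cref{definition of torsion degeneration}.

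There is, however, a genuine bookkeeping error in the direction of the $\mathrm{Tor}$ shift. Since $\mathcal{O}_K$ and $\phi_*\mathcal{O}_K$ have flat dimension $1$ over $\mathfrak{S}$, the universal-coefficient short exact sequence reads
\[
0 \to H^i_{\Prism}(\mathcal{X}/\mathfrak{S}) \otimes_{\mathfrak{S}} M \to H^i\bigl(\mathrm{R\Gamma}_{\Prism}(\mathcal{X}/\mathfrak{S}) \otimes^{\mathbb{L}}_{\mathfrak{S}} M\bigr) \to \mathrm{Tor}_1^{\mathfrak{S}}\bigl(H^{i+1}_{\Prism}(\mathcal{X}/\mathfrak{S}),\, M\bigr) \to 0
\]
for $M \in \{\mathcal{O}_K, \phi_*\mathcal{O}_K\}$; the extra contribution comes from $H^{i+1}_{\Prism}$, not $H^{i-1}_{\Prism}$. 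This is not cosmetic: it is precisely the reason the hypothesis is $i < T$ rather than $i \leq T$, since one needs Min's shape on $H^{i+1}_{\Prism}$ (i.e.\ $(i+1)\cdot e < p-1$) as well as on $H^i_{\Prism}$. With your indexing the constraint would be vacuous and the argument would overreach by one degree.

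Once the index is fixed, note that the $\mathrm{Tor}_1$ term actually \emph{vanishes}: both $\mathfrak{S}$ and $\mathfrak{S}/p^{n_j}$ are $\mathrm{Tor}$-independent from $\mathfrak{S}/E$ and from $\phi_*\mathcal{O}_K$ (multiplication by $E$, resp.\ $\phi(E)$, is injective on $\mathfrak{S}/p^{n_j}$ because it is so modulo $p$). Hence for $i < T$ one simply has $H^i_{\mathrm{HT}} \cong H^i_{\Prism} \otimes_{\mathfrak{S}} \mathcal{O}_K$ and $H^i_{\mathrm{dR}} \cong H^i_{\Prism} \otimes_{\mathfrak{S}} \phi_*\mathcal{O}_K$, and both are $\mathcal{O}_K^n \oplus \bigoplus_j \mathcal{O}_K/p^{n_j}$ as $\mathcal{O}_K$-modules, with no residual ``$\mathrm{Tor}_1$ term'' to track. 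This cleans up the last paragraph of your sketch: there is no need to match $\mathrm{Tor}_1$ contributions or to worry about $\phi(E)$ being Eisenstein (it is not, in general, though it is a nonzerodivisor modulo every $p^m$, which is all that is used).
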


\begin{proof}
Since \(\mathcal{O}_K\) and \(\phi_* \mathcal{O}_K\) are of flat dimension $1$
as \(\mathfrak{S}\)-modules, we have short exact sequences:
\[
0 \to H^i_{\Prism}(\mathcal{X}/\mathfrak{S}) \otimes_{\mathfrak{S}} \mathcal{O}_K
\to H^i(\mathrm{R\Gamma_{\Prism}}(\mathcal{X}/\mathfrak{S}) \otimes^{\mathbb{L}}_{\mathfrak{S}} \mathcal{O}_K)
\to Tor_1^{\mathfrak{S}}(H^{i+1}_{\Prism}(\mathcal{X}/\mathfrak{S}), \mathcal{O}_K)
\to 0,
\]
\[
0 \to H^i_{\Prism}(\mathcal{X}/\mathfrak{S}) \otimes_{\mathfrak{S}} \phi_* \mathcal{O}_K
\to H^i(\mathrm{R\Gamma_{\Prism}}(\mathcal{X}/\mathfrak{S}) \otimes^{\mathbb{L}}_{\mathfrak{S}} \phi_* \mathcal{O}_K)
\to Tor_1^{\mathfrak{S}}(H^{i+1}_{\Prism}(\mathcal{X}/\mathfrak{S}), \phi_* \mathcal{O}_K)
\to 0.
\]

Notice that \(\mathfrak{S}\) and \(\mathfrak{S}/p^\ell\) are, as \(\mathfrak{S}\)-modules,
Tor-independent with \(\mathcal{O}_K\) and \(\phi_* \mathcal{O}_K\).
Therefore in the situation considered, by~\cref{prisms} (1) and (2) 
and~\cref{structure of prismatic},
we have an abstract isomorphism of \(\mathcal{O}_K\)-modules:
\[
H^i_{\mathrm{dR}}(\mathcal{X}/\mathcal{O}_K) \simeq 
H^i_{\mathrm{HT}}(\mathcal{X}/\mathcal{O}_K)
\]
for all \(i < T\).

Moreover we know that both Hodge--de Rham and Hodge--Tate 
spectral sequences degenerate after inverting \(\pi\)
(see~\cite[Corollary 1.8]{Sch13} and~\cite[Theorem 1.7]{BMS1}).
Hence we reduce these two statements respectively 
to~\cref{definition of torsion degeneration} (1) and (2),
by observing that the starting pages of these two spectral sequences
are formed by abstractly isomorphic \(\mathcal{O}_K\) modules (after switching bi-degrees).
\end{proof}

Finally we are ready to prove~\cref{main theorem}.
\begin{proof}[Proof of~\cref{main theorem}]
By anti-symmetrizing (c.f.~\cite[step (a) of Theorem 2.1]{DI87}) the section
\[
\Omega^1_{\mathcal{O}_{\mathcal{X}}}\{-1\}[-1] \to
\tau^{\leq 1}R\nu_*\overline{\mathcal{O}}_{\Prism}
\]
given by (see~\cref{prisms} (3)) 
the lift of \(\mathcal{X}\) to \(\mathfrak{S}/(E^2)\),
we see that the conjugate filtration splits 
(note that our constraint on \(\dim X\) in particular
implies the relative dimension is smaller than \(p\)):
\[
\bigoplus_{i = 0}^{\dim X} \Omega^i_{\mathcal{O}_{\mathcal{X}}}\{-i\}[-i]
\simeq R\nu_*\overline{\mathcal{O}}_{\Prism}.
\]
Therefore we see that the Hodge--Tate spectral sequence~\ref{HT SS}
has split degenerate torsion in all degrees, 
and thus in particular in degrees $(-\infty, T-1]$.
By~\cref{equivalence of SS} we see that the Hodge--de Rham spectral sequence
must also have split degenerate torsion in the same range of degrees.

Now we invoke the duality statements for de Rham and Hodge cohomologies.
By Poincar\'{e} duality for de Rham cohomology~\cite[Chapter VII, Th\'{e}or\`{e}me 2.1.3]{Ber74} 
we have an identification
\[
\mathrm{R\Gamma_{dR}}(\mathcal{X}/\mathcal{O}_K) \cong 
\mathrm{RHom}_R(\mathrm{R\Gamma_{dR}}(\mathcal{X}/\mathcal{O}_K), \mathcal{O}_K[-2d]),
\]
where $d = \dim(\mathcal{X}_o)$ and we have implicitly identified de Rham and crystalline cohomologies.
Similarly, by Serre duality for Hodge cohomology~\cite[\href{https://stacks.math.columbia.edu/tag/0BRT}{Tag 0BRT}, \href{https://stacks.math.columbia.edu/tag/0AU3}{Tag 0AU3}, and \href{https://stacks.math.columbia.edu/tag/0E9Z}{Tag 0E9Z}]{stacks-project} we have an identification
\[
\mathrm{R\Gamma}(\Omega^{d-i}_{\mathcal{X}/\mathcal{O}_K}) \cong 
\mathrm{RHom}_R(\mathrm{R\Gamma}(\Omega^i_{\mathcal{X}/\mathcal{O}_K}), \mathcal{O}_K[-d]).
\]
Using~\cref{duality for perfect complex} and what we obtained in the previous paragraph,
we can now conclude that the Hodge--de Rham spectral sequence has split degenerate torsion
in degrees $[2d+2-T, +\infty)$.

The union of intervals $(-\infty, T-1] \bigcup [2d+2-T, +\infty)$ cover all integers
exactly when $d+1 \leq T$.
Therefore condition (2) now implies that the Hodge--de Rham spectral sequence
is split degenerate by~\cref{characterizing degenerations} (2).
The last statement concerning numerical equalities follows from~\cref{equality of numbers}.
\end{proof}

\begin{remark}
An ongoing project of Bhatt--Lurie establishes a duality statement for prismatic cohomology.
Assuming their result, together with Min's result, one get a more uniform proof of the Main Theorem.
Indeed these results together imply that under the assumption (2) all the prismatic cohomology
of $(\mathcal{X}/\mathfrak{S})_{\Prism}$ are of the shape 
$\mathfrak{S}^{n} \oplus \bigoplus_{j \in J} \mathfrak{S}/p^{n_j}$.
Notice however their duality statement does not improve the bound on dimension, because their duality statement
is over the \emph{$2$-dimensional} ring $\mathfrak{S}$.
\end{remark}

In the situation where \(\dim X\) exceeds the bound, our argument produces the following.
Recall that $T$ denotes the largest integer satisfying $T \cdot e < p-1$.

\begin{porism}
\label{porism}
Let \(\mathcal{X}\) be a smooth proper formal scheme over \(\mathcal{O}_K\)
which lifts to \(\mathfrak{S}/(E^2)\).
Then the differentials in the Hodge--de Rham spectral sequence 
with target of total degree \(\leq T - 1\) are zero,
the induced Hodge filtrations on de Rham cohomology of degree
\(\leq T - 1\) are split.
Hence we have \(\mathfrak{h}^{i,j}(\mathcal{X}_0) = H^{i,j}(X)\),
for \(i + j \leq T - 2\) (or equivalently 
\((i + j + 2) \cdot e < p - 1\)).
\end{porism}

\begin{proof}
By the first paragraph of proof of \cref{main theorem}, we see that the Hodge--de Rham spectral sequence has split degenerate
torsion up to degree $(T-1)$.
The statement about Hodge filtrations being split follows from~\cref{characterizing torsion degenerations}.
The statement about equality of numbers follows from~\cref{referee suggest},
notice that in applying~\cref{referee suggest}, we need the map of next cohomological degree to be injective,
hence lowering the range of application by $1$.
\end{proof}

In this proof, we are not using the duality statement.
In particular, following this proof, if we have $2 (\dim_{\mathcal{X}_0} + 1) \cdot e < p-1$,
then we do not need to invoke the duality statement in the proof of \cref{main theorem}.

In the case when \(e = 1\), namely \(\mathcal{X}_0\) has an unramified lifting,
the result of Fontaine--Messing and Kato gives something slightly more.

\begin{remark}
\label{FM remark}
It is a result of Fontaine--Messing~\cite[Corollary 2.7.(iii)]{FM87} 
and Kato~\cite[Chapter II Proposition 2.5.(1)]{K87}
that given an unramified lift \(\mathcal{X}\),
the Hodge--de Rham spectral sequence degenerates up to degree \(p-1\),
namely all the differentials with \emph{target} of total degree
\(\leq p-1\) are zero.
Moreover they showed (see~\cite[Corollary 2.7.(ii)) and Remark 2.8.(ii)]{FM87} 
and~\cite[Chapter II Proposition 2.5.(2)]{K87}) that the integral Hodge filtrations
on \(H^i_{\mathrm{dR}}(\mathcal{X}/\mathcal{O}_K)\), 
where \(0 \leq i \leq p-1\),
are equipped with divided Frobenius structure and altogether
these form so-called Fontaine--Laffaille modules~\cite{FL83}.
In particular, by a result of Wintenberger~\cite{Win84},
the Hodge filtrations are split submodules in the range \(0 \leq i \leq p-1\).
Hence these results 
imply the following:

\begin{corollary}[{Corollary of~\cite[Corollary 2.7]{FM87} or~\cite[Chapter II Proposition 2.5]{K87}}]
Let \(\mathcal{X}\) be an unramified lift of a smooth proper variety
\(\mathcal{X}_0\). 
Then the degree \(\leq p-2\) Hodge numbers of \(X\) 
are determined by \(\mathcal{X}_0\).
\end{corollary}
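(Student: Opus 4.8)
The plan is to rerun the argument of \cref{porism}, but with its input --- the splitting of the conjugate filtration coming from a lift to \(\mathfrak{S}/(E^2)\) --- replaced by the stronger degeneration statements of Fontaine--Messing and Kato recalled above, which are available precisely because \(\mathcal{X}\) is an unramified lift. This buys roughly two extra degrees: in the unramified case the Min threshold \(T\) equals \(p-2\), whereas \cite{FM87} and \cite{K87} give degeneration, and \cite{Win84} gives splitting of the Hodge filtration, all the way up to degree \(p-1\).

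First I would restate the cited results in the vocabulary of \cref{definition of SS degeneration}. The vanishing of all Hodge--de Rham differentials with target of total degree \(\leq p-1\) means exactly that the natural maps \(H^n(\Omega^{\geq i}_{\mathcal{X}/\mathcal{O}_K}) \to H^n(\Omega^{\bullet}_{\mathcal{X}/\mathcal{O}_K})\) are injective for every \(i\) and every \(n \leq p-1\), so in this range \(\mathrm{Fil}^i H^n_{\mathrm{dR}}(\mathcal{X}/\mathcal{O}_K) = \mathrm{Im}(H^n(\Omega^{\geq i}) \to H^n_{\mathrm{dR}})\). Wintenberger's theorem \cite{Win84}, applied to the Fontaine--Laffaille module \(H^n_{\mathrm{dR}}(\mathcal{X}/\mathcal{O}_K)\) for \(0 \leq n \leq p-1\), then says that each such \(\mathrm{Fil}^i\) is a direct summand; in particular the inclusion \(H^n(\Omega^{\geq i}_{\mathcal{X}/\mathcal{O}_K})_{\mathrm{tf}} \hookrightarrow H^n(\Omega^{\bullet}_{\mathcal{X}/\mathcal{O}_K})_{\mathrm{tf}}\) is saturated whenever \(n \leq p-1\).

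Next I would feed this into \cref{referee suggest} with \(U = \mathrm{R\Gamma}(\Omega^{\geq i}_{\mathcal{X}/\mathcal{O}_K})\), \(V = \mathrm{R\Gamma}(\Omega^{\bullet}_{\mathcal{X}/\mathcal{O}_K})\) and \(n = i+j\): hypothesis (1) holds for \(n \leq p-1\) by the saturation just noted, while hypothesis (2), the injectivity of \(H^{n+1}(U) \to H^{n+1}(V)\), holds once \(n+1 \leq p-1\), i.e.\ \(n \leq p-2\). For such \(n\) the conclusion of \cref{referee suggest}, combined with \cref{identification of filtrations}, \cref{identification remark} and the degeneracy in degrees \(n\) and \(n+1\), yields the same equality of dimensions of Hodge filtrations as in the proof of \cref{equality of numbers}; taking successive differences gives \(\mathfrak{h}^{i,j}(\mathcal{X}_0) = h^{i,j}(X)\) for all \((i,j)\) with \(i+j \leq p-2\). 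Since the virtual Hodge numbers \(\mathfrak{h}^{i,j}(\mathcal{X}_0)\) depend only on the characteristic-\(p\) variety \(\mathcal{X}_0\) (the remark following \cref{virtual Hodge}), this proves the assertion. The one point to watch --- and the source of any off-by-one --- is the loss of a single degree when passing from the Fontaine--Messing/Kato range (degree \(\leq p-1\)) to the conclusion (degree \(\leq p-2\)), forced by the \(H^{n+1}\)-injectivity in hypothesis (2) of \cref{referee suggest}; this is exactly the phenomenon already seen in the proof of \cref{porism}.
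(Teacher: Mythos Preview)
The proposal is correct and matches the paper's approach: the paper presents this Corollary as an immediate consequence of the Fontaine--Messing/Kato degeneration and Wintenberger's splitting (recalled in the surrounding remark), with the implicit argument being exactly the one you spell out --- feed split degeneracy up to degree \(p-1\) into \cref{referee suggest}, losing one degree because of the \(H^{n+1}\)-injectivity hypothesis, exactly as in the proof of \cref{porism}.
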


Comparing their results concerning unramified liftings with our result,
we see that our approach (specialized to unramified liftings)
so far can only prove analogous facts with 
\(\mathcal{X}_0\) of \(2\) dimension less.
This is due to:
\begin{enumerate}
\item Min's result was established by
proving the \(i\)-th prismatic cohomology of \(\mathcal{X}/\mathfrak{S}\)
shares the same structure as 
the \(i\)-th \'{e}tale cohomology of the (geometric) generic fiber \(X\),
which (in general) only holds when \(i \cdot e < p-1\); and
\item in our general argument we have no control of the contribution
of \(E\)-torsion of prismatic cohomology groups beyond the designated range.
\end{enumerate}
Perhaps both of these two obstacles may be overcome in the unramified case,
which would recover Fontaine--Messing's or Kato's result in this particular direction.
\end{remark}

\subsection{Summary}
\label{summary}
Given \(\mathcal{X}/\mathcal{O}_K\) as in~\cref{notations},
let $T$ be the largest integer satisfying $T \cdot e < p - 1$.
There are following conditions on \(\mathcal{X}/\mathcal{O}_K\):
\begin{enumerate}[label=\textbf{C.\arabic*}]
    \item \label{1} The formal scheme \(\mathcal{X}\) lifts to \(\mathfrak{S}/(E^2)\);
    \item \label{2} The Hodge--Tate spectral sequence has split degenerate torsion
    up to degree $T-1$;
    \item \label{3} The Hodge--Tate spectral sequence has saturated degenerate torsion
    up to degree $T-1$;
    \item \label{4} The Hodge--de Rham spectral sequence has split degenerate torsion
    up to degree $T-1$;
    \item \label{5} The Hodge--de Rham spectral sequence has saturated degenerate torsion
    up to degree $T-1$;
    \item \label{6} The Hodge--de Rham spectral sequence degenerates splittingly;
    \item \label{7} The Hodge--de Rham spectral sequence degenerates saturatedly;
    \item \label{8} The virtual Hodge numbers of \(\mathcal{X}_0\) equal the Hodge numbers of \(X\).
\end{enumerate}

The relations between these conditions are summarized in the following diagram:
\[
\xymatrix{
\ref{1} \ar@{=>}[r]^{\alpha} & \ref{2} \ar@{=>}[r] \ar@{<=>}[d]^{\beta} 
& \ref{3} \ar@{<=>}[d]^{\beta} & \\
                             & \ref{4} \ar@{=>}[r] \ar@{<=>}[d]^{\delta} 
& \ref{5} \ar@{<=>}[d]^{\delta} & \\
e = 1 \ar@{=>}[uu] \ar@{=>}[ru]^{\gamma}  & \ref{6}   \ar@{=>}[r]   & 
\ref{7}   \ar@{=>}[r]^{\epsilon}   & \ref{8}.
}
\]
Below we remind readers under what condition (and why) 
we have some of these implications:
\begin{itemize}
\item \(\alpha\) holds when \(\dim X \leq p-1\) and follows
from~\cite[Remark 4.13 and Proposition 4.14]{BS19};
\item \(\beta\) follows from Min's work~\cite{Min19}
together with the analysis of relevant spectral sequences,
see~\cref{equivalence of SS};
\item \(\gamma\) holds provided \(\dim X \leq p-2\)
and follows from the work of either 
Fontaine--Messing~\cite[Corollary 2.7]{FM87}
or Kato~\cite[Chapter II Proposition 2.5]{K87};
\item \(\delta\) holds when $\dim\mathcal{X}_0 + 1 \leq T$ 
by duality of de Rham and Hodge cohomologies; and
\item \(\epsilon\) always holds and is the content 
of~\cref{equality of numbers}.
\end{itemize}

\section{Lifting the example of Antieau--Bhatt--Mathew}
\label{lifting ABM}

One might wonder if it is really necessary to have both of conditions (1) and (2)
in~\cref{main theorem}, or even any condition at all,
in order for the Hodge--de Rham spectral sequence to behave nicely.
We would like to mention that in~\cite{Li18}
we found pairs of relatively \(3\)-dimensional
smooth projective schemes over \(\mathbb{Z}_p[\zeta_p]\),
such that their special fibers are isomorphic but the
degree \(2\) Hodge numbers
of their generic fibers are different.
Therefore these give rise to examples where the 
Hodge--de Rham spectral sequence is not saturated degenerate by~\cref{equality of numbers}.
The cohomological degree times ramification index of these examples are
twice of \(p-1\), so this example do not satisfy condition (2) in~\cref{main theorem}.
Moreover, the author suspects that these examples do not satisfy condition (1) 
in~\cref{main theorem} as well.

While it is unclear whether condition (2) in~\cref{main theorem} is really necessary,
in this last section we would like to illustrate, by an example, 
the necessity of condition (1) in~\cref{main theorem}.
More precisely, we shall construct smooth proper schemes over
degree \(2\) ramified extensions of \(\mathbb{Z}_p\),
such that the Hodge--de Rham spectral sequence are not degenerate (starting at degree \(3\)),
and the Hodge filtrations are non-saturated (starting at degree \(2\)).
The idea is to approximate the classifying stack of a lift of \(\alpha_p\)
(which only exists over a ramified ring of integers),
and the key computations and techniques are already in~\cite{ABM19}.

We remark that in a concise paper by W.~Lang~\cite{Lang95}, examples in positive characteristic
which admit liftings to ramified DVRs but violate 
Hodge--de Rham degeneration have been found.
Lang used exactly the idea of approximating $B\alpha_p$, we learned from loc.~cit.~that this idea
dates back to Raynaud~\cite{Ray79}.
In the spirit that our~\Cref{main theorem} 
may be thought of as a generalization of Deligne--Illusie's result,
our example here can also be thought of as a generalization of Lang's.

\subsection{Recollection of~\cite{TO70}}
In this subsection, we give a preliminary discussion of group schemes of order \(p\) over \(p\)-adic base rings.
Fix a scheme \(S\) over \(\mathbb{Z}_p\).
Recall that in~\cite{TO70}, the authors made a detailed study of finite flat group schemes of order \(p\) over \(S\), 
below let us summarize their results.

Firstly, all such group schemes are commutative~\cite[Theorem 1]{TO70}.
Secondly, for each \(p\) there is a unit \(\omega \in \mathbb{Z}_p^*\) (which is denoted as \(\omega_{p-1}\) in loc.~cit.), see \cite[Remark on p.~11]{TO70} for a recursive formula
defining it,
and a bijection between 
\begin{enumerate}
    \item isomorphism classes of finite flat order \(p\) group schemes over \(S\); and
    \item isomorphism classes of triples \((\mathcal{L}, a, b)\) where
    \(\mathcal{L}\) is a line bundle on $S$, elements $a$ and $b$ are sections
    of line bundles:
    \(a \in \Gamma(S, \mathcal{L}^{\otimes (p-1)})\) and
    \(b \in \Gamma(S, \mathcal{L}^{\otimes (1-p)})\), and they satisfy the relation
    \(a \otimes b = p\omega\),
\end{enumerate}
here we have identified 
\(\mathcal{L}^{\otimes (p-1)} \otimes \mathcal{L}^{\otimes (1-p)} \cong \mathcal{O}_S\)~\cite[Theorem 2]{TO70}.
The group associated with \((\mathcal{L}, a, b)\) is denoted by \(\mathrm{G}^{\mathcal{L}}_{a,b}\),
with underlying scheme structure given by 
\(\underline{\mathrm{Spec}}(\mathcal{O}_S \oplus \mathcal{L}^{-1} \oplus \ldots \oplus \mathcal{L}^{-p+1})\)
where the ring structure comes from \(\mathcal{L}^{-p} \xrightarrow{a} \mathcal{L}^{-1}\)~\cite[P.~12]{TO70}.
So \(\mathrm{G}^{\mathcal{L}}_{a,b}\) is an \'{e}tale group scheme if and only if 
\(a \in \Gamma(S, \mathcal{L}^{\otimes (p-1)})\) is an invertible section~\cite[P.~16 Remark 6]{TO70}.
Moreover the Cartier dual of 
\(\mathrm{G}^{\mathcal{L}}_{a,b}\) is\footnote{Note that they are commutative group schemes by first sentence of this paragraph.}
given by \(\mathrm{G}^{\mathcal{L}^{-1}}_{b,a}\)~\cite[P.~15 Remark 2]{TO70}.

\begin{example}
When \(S = \mathrm{Spec}(\mathbb{F}_p)\), there is only one line bundle on \(S\), namely \(\mathcal{O}_S\).
Furthermore we have \(p = 0\) on \(S\). 
Hence we see that group schemes of order \(p\) over \(S\) are classified by pairs \((a,b) \in \mathbb{F}_p^2\)
with the constraint that \(ab = 0\).
Note that these pairs have no nontrivial automorphism as any invertible element \(u \in \mathbb{F}_p^*\) satisfies
\(u^{p-1} = 1\).
There are three possibilities:
\begin{enumerate}
    \item \(a \not= 0\), which forces \(b = 0\), 
    corresponding to a form of the \'{e}tale group scheme \(\mathbb{Z}/p\).
    When \(a = 1\), it is \(\mathbb{Z}/p\).
    \item Dually we can have \(b \not= 0\) and \(a = 0\),
    corresponding to a form of \(\mu_p\). It is \(\mu_p\) when \(b = 1\).
    \item Lastly if both of \(a = b = 0\), we get \(\alpha_p\).
\end{enumerate}

\end{example}

\subsection{A stacky example}
Now we specialize to the case where \(S = \mathrm{Spec}(\mathcal{O}_K)\) is given by
the valuation ring of a \(p\)-adic field.
There is no non-trivial line bundle on a local scheme such as \(S\).
In order to lift \(\alpha_p\) from the residue field of \(\mathcal{O}_K\),
it suffices to find an element \(\pi \in \mathfrak{m}\) such that
\(p/\pi \in \mathfrak{m}\).
Here \(\mathfrak{m}\) denotes the maximal ideal in \(\mathcal{O}_K\).
We see that \(\mathcal{O}_K\) cannot be absolutely unramified,
and as long as it is ramified, we may find such an element \(\pi\).
From now on, let us fix such a choice of \(\mathcal{O}_K\) and \(\pi\).

\begin{notation}
Let \(K\) be a degree \(2\) ramified extension of \(\mathbb{Q}_p\)
with ring of integers \(\mathcal{O}_K\), a uniformizer \(\pi\) in
the maximal ideal \(\mathfrak{m} \subset \mathcal{O}_K\).
Then \(\pi' \coloneqq p \omega/\pi\) is a uniformizer as well.
Denote \(S \coloneqq \mathrm{Spec}(\mathcal{O}_K)\) and let
\(G \coloneqq G^{\mathcal{O}_S}_{\pi, \pi'}\) be the lift of \(\alpha_p\)
over \(S\)
corresponding to \((\pi, \pi')\).
\end{notation}

In the following we shall study the Hodge--Tate and
Hodge--de Rham spectral sequence of \(BG\).
Note that \(BG\) is a smooth proper stack over \(\mathrm{Spec}(\mathcal{O}_K)\)
with special fiber \(BG \times_{\mathcal{O}_K} \mathbb{F}_p \cong B\alpha_p\).
The following computation of Antieau--Bhatt--Mathew is very useful.

\begin{proposition}[{see~\cite[Proposition 4.10]{ABM19}}]
\label{computation of ABM}
If \(p > 2\), the Hodge cohomology group of 
\(B\alpha_p\) is given by
\[
H^*(B\alpha_p, \wedge^*L_{B\alpha_p/\mathbb{F}_p}) \cong
E(\alpha) \otimes P(\beta) \otimes E(s) \otimes P(u)
\]
where \(E(-)\) (resp.~\(P(-)\)) denotes the exterior (resp.~polynomial) algebra
on the designated generator.
Here \(\alpha \in H^1(B\alpha_p, \mathcal{O})\),
\(\beta \in H^2(B\alpha_p, \mathcal{O})\),
\(s \in H^0(B\alpha_p, L_{B\alpha_p/\mathbb{F}_p})\)
and \(u \in H^1(B\alpha_p, L_{B\alpha_p/\mathbb{F}_p})\).
For \(p=2\) we replace \(E(\alpha) \otimes P(\beta)\) with \(P(\alpha)\).
\end{proposition}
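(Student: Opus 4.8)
The plan is to work throughout in the derived category of $\alpha_p$-representations over $\mathbb{F}_p$. Since $B\alpha_p = [\mathrm{Spec}\,\mathbb{F}_p/\alpha_p]$ and $\alpha_p$ is infinitesimal, quasi-coherent sheaves on $B\alpha_p$ are comodules over the Hopf algebra $\mathcal{O}(\alpha_p) = \mathbb{F}_p[x]/(x^p)$ (with $x$ primitive), equivalently modules over the restricted enveloping algebra $u(\mathrm{Lie}\,\alpha_p) = \mathbb{F}_p[t]/(t^p)$ (with $t$ primitive), and $\mathrm{R\Gamma}(B\alpha_p,-)$ becomes $\mathrm{RHom}_{\mathbb{F}_p[t]/(t^p)}(\mathbb{F}_p,-)$. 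First I would record the structure-sheaf computation $H^*(B\alpha_p,\mathcal{O}) = \mathrm{Ext}^*_{\mathbb{F}_p[t]/(t^p)}(\mathbb{F}_p,\mathbb{F}_p)$, which is classical (and coincides with $H^*(\mathbb{Z}/p,\mathbb{F}_p)$ via the algebra isomorphism $\mathbb{F}_p[t]/(t^p) \cong \mathbb{F}_p[\mathbb{Z}/p]$, $t \mapsto g-1$): the periodic minimal free resolution of $\mathbb{F}_p$ over $\mathbb{F}_p[t]/(t^p)$ has differentials alternating between $\cdot t$ and $\cdot t^{p-1}$ (all equal to $\cdot t$ when $p=2$), these become zero after applying $\mathrm{Hom}(-,\mathbb{F}_p)$, and keeping track of cup products yields $E(\alpha) \otimes P(\beta)$ for $p$ odd and $P(\alpha)$ for $p=2$, with $\alpha \in H^1$ and $\beta \in H^2$.

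The crux is to identify $L_{B\alpha_p/\mathbb{F}_p}$ on the nose as an object of $D(\mathrm{Rep}\,\alpha_p)$. Here I would use the exact sequence $0 \to \alpha_p \to \mathbb{G}_a \xrightarrow{F} \mathbb{G}_a \to 0$, which exhibits $B\alpha_p$ as the (derived) fibre of $BF\colon B\mathbb{G}_a \to B\mathbb{G}_a$ over the basepoint. Base change for the cotangent complex along the resulting Cartesian square gives $L_{B\alpha_p/\mathbb{F}_p} \simeq j^*L_{BF}$, where $j\colon B\alpha_p \to B\mathbb{G}_a$. Now $\mathbb{G}_a$ is smooth and abelian, so $L_{B\mathbb{G}_a/\mathbb{F}_p} \simeq \mathbb{F}_p[-1]$ (the trivial representation, shifted), and the transitivity triangle identifies $L_{BF}$ with the cone of the canonical map $BF^*L_{B\mathbb{G}_a/\mathbb{F}_p} \to L_{B\mathbb{G}_a/\mathbb{F}_p}$, which is $(dF)^\vee[-1]$. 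Since $dF = 0$ in characteristic $p$, this map vanishes, so $L_{BF} \simeq \mathbb{F}_p \oplus \mathbb{F}_p[-1]$; pulling back along $j$, I conclude $L_{B\alpha_p/\mathbb{F}_p} \simeq \mathbb{F}_p \oplus \mathbb{F}_p[-1]$ is a direct sum of two shifts of the \emph{trivial} representation --- in particular it is pulled back from $\mathrm{Spec}\,\mathbb{F}_p$.

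Everything downstream is formal. Let $s$ (resp.\ $u$) be the generator of the summand $\mathbb{F}_p$ in cohomological degree $0$ (resp.\ $\mathbb{F}_p[-1]$ in cohomological degree $1$), both in Hodge degree $1$. Derived d\'{e}calage gives $L\wedge^b(\mathbb{F}_p[-1]) \simeq \Gamma^b(\mathbb{F}_p)[-b] \simeq \mathbb{F}_p[-b]$ for all $b \geq 0$, while $L\wedge^a(\mathbb{F}_p) = \wedge^a\mathbb{F}_p$ equals $\mathbb{F}_p$ for $a \in \{0,1\}$ and $0$ otherwise; hence $L\wedge^i L_{B\alpha_p/\mathbb{F}_p} \simeq \bigoplus_{a+b=i,\, a\in\{0,1\}} \mathbb{F}_p[-b]$, still a complex of trivial representations, and summing over $i$ these pieces assemble into $E(s) \otimes P(u)$ ($s$ in Hodge/cohomological bidegree $(1,0)$, $u$ in bidegree $(1,1)$). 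Since each $L\wedge^i L_{B\alpha_p/\mathbb{F}_p}$ is pulled back from the base, the projection formula for $B\alpha_p \to \mathrm{Spec}\,\mathbb{F}_p$ gives $\mathrm{R\Gamma}(B\alpha_p,\wedge^*L_{B\alpha_p/\mathbb{F}_p}) \simeq \mathrm{R\Gamma}(B\alpha_p,\mathcal{O}) \otimes_{\mathbb{F}_p} (E(s)\otimes P(u))$, which is the asserted answer $E(\alpha)\otimes P(\beta)\otimes E(s)\otimes P(u)$ (resp.\ $P(\alpha)\otimes E(s)\otimes P(u)$ when $p=2$).

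The hard part is the middle step: identifying $L_{B\alpha_p/\mathbb{F}_p}$ exactly, and in particular showing that it genuinely \emph{splits} as $\mathbb{F}_p \oplus \mathbb{F}_p[-1]$ rather than being a nonsplit two-step Postnikov tower classified by a class in $\mathrm{Ext}^2_{\mathrm{Rep}\,\alpha_p}(\mathbb{F}_p,\mathbb{F}_p) = \langle\beta\rangle$, which is nonzero; the vanishing of $dF$ is precisely what forces this class to vanish. Once $L_{B\alpha_p/\mathbb{F}_p}$ is pinned down, the rest is routine d\'{e}calage-and-projection-formula bookkeeping. All of this is carried out in detail in~\cite{ABM19}.
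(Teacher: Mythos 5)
Your proposal is correct and follows essentially the same route as \cite[Proposition 4.10]{ABM19}, which is all this paper does for the statement (it is quoted, not reproved): the two inputs you isolate --- $H^*(B\alpha_p,\mathcal{O})\cong \mathrm{Ext}^*_{\mathbb{F}_p[t]/(t^p)}(\mathbb{F}_p,\mathbb{F}_p)$ computed by duality, and the splitting $L_{B\alpha_p/\mathbb{F}_p}\simeq \mathcal{O}\oplus\mathcal{O}[-1]$ forced by $\alpha_p=\ker(F\colon\mathbb{G}_a\to\mathbb{G}_a)$ together with $dF=0$ --- are exactly the facts the paper later attributes to that proof. One label to fix in the final bookkeeping: for a coconnective shift the d\'ecalage reads $L\wedge^b(\mathcal{O}[-1])\simeq \mathrm{Sym}^b(\mathcal{O})[-b]$ (divided powers $\Gamma^b$ occur for $\mathcal{O}[1]$, not $\mathcal{O}[-1]$); for a trivial line bundle this changes nothing additively, but it is precisely the reason the $u$-direction is the polynomial algebra $P(u)$ rather than a divided power algebra in which $u^p=0$, so your asserted ring structure is the right one while the cited identification should say $\mathrm{Sym}$ rather than $\Gamma$.
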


\begin{lemma}
The cotangent complex of \(BG\) is 
\(L_{BG/\mathcal{O}_K} \simeq \mathcal{O}/(\pi)[-1]\).
\end{lemma}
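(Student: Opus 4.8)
The statement to prove is that $L_{BG/\mathcal{O}_K} \simeq \mathcal{O}/(\pi)[-1]$, where $G = G^{\mathcal{O}_S}_{\pi,\pi'}$ is the chosen lift of $\alpha_p$. The strategy is to first compute $L_{G/\mathcal{O}_K}$ directly from the explicit affine presentation of $G$ coming from \cite{TO70}, and then to transfer this to the classifying stack $BG$ via the standard fiber sequence relating the cotangent complex of a group and that of its classifying stack.

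\textbf{Step 1: the cotangent complex of $G$ itself.} By the Oort--Tate description recalled above, $G = \mathrm{Spec}(A)$ with $A = \mathcal{O}_K[x]/(x^p - \pi' x)$ (taking $\mathcal{L} = \mathcal{O}_S$, so the ring structure is $\mathcal{O}_S^{-p} \xrightarrow{a} \mathcal{O}_S^{-1}$ with $a = \pi$; one checks the relation defining $A$ is $x^p = \pi' x$ after normalizing, using $\pi\pi' = p\omega$). Since $A$ is a complete intersection over $\mathcal{O}_K$ — a quotient of the smooth algebra $\mathcal{O}_K[x]$ by one nonzerodivisor $f = x^p - \pi' x$ — its cotangent complex is the two-term complex $[ (f)/(f^2) \to \Omega^1_{\mathcal{O}_K[x]/\mathcal{O}_K}\otimes A ]$ concentrated in degrees $-1, 0$, i.e. $[ A \xrightarrow{f'} A\,dx ]$ with $f' = p x^{p-1} - \pi' = -\pi'$ modulo $p$-divisible corrections; more precisely $f' = -\pi'$ after reducing the $p x^{p-1}$ term, so the map is multiplication by (a unit times) $\pi'$. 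At the identity section $e\colon \mathcal{O}_K \to A$ (i.e. $x \mapsto 0$) one pulls this back to $[\mathcal{O}_K \xrightarrow{\pi'} \mathcal{O}_K]$, whose cohomology is $\mathcal{O}_K/\pi'$ in degree $0$ — and since $\pi'$ and $\pi$ are both uniformizers, $\mathcal{O}_K/\pi' \cong \mathcal{O}_K/\pi = \mathcal{O}/(\pi)$. So $e^* L_{G/\mathcal{O}_K} \simeq \mathcal{O}/(\pi)$ sitting in degree $0$.

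\textbf{Step 2: passing to $BG$.} For a flat affine group scheme $G$ over a base with structure map $\pi\colon \mathrm{Spec}(\mathcal{O}_K) \to BG$, there is a canonical equivalence $\pi^* L_{BG/\mathcal{O}_K} \simeq (L_{G/\mathcal{O}_K}|_e)[-1]$, coming from the bar/descent presentation of $BG$ (equivalently: the tangent complex of $BG$ at the base point is the Lie algebra $\mathrm{Lie}(G)$ placed in homological degree $1$, and dually for the cotangent complex). Since $\pi$ is a faithfully flat cover of $BG$ and $L_{BG/\mathcal{O}_K}$ descends along it, combining this with Step 1 gives $\pi^*L_{BG/\mathcal{O}_K} \simeq \mathcal{O}/(\pi)[-1]$, and the object $\mathcal{O}/(\pi)[-1]$ (pulled back from $B G$ as the restriction of scalars of the trivial module, since the coadjoint action of $G$ on the $1$-dimensional $\mathrm{Lie}(G)^\vee = \mathcal{O}_K/\pi$ is trivial — $G$ is commutative and $\alpha_p$-like) descends to $L_{BG/\mathcal{O}_K} \simeq \mathcal{O}/(\pi)[-1]$ on $BG$ as claimed.

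\textbf{The main obstacle.} The genuinely delicate point is Step 1: getting the differential in $L_{G/\mathcal{O}_K}$ exactly right, i.e. confirming that after restricting along the identity section the surviving map is multiplication by a \emph{uniformizer} (so the cohomology is $\mathcal{O}/(\pi)$ and not, say, all of $\mathcal{O}_K/p$ or something that degenerates). This hinges on the identity $a\otimes b = p\omega$ from \cite[Theorem 2]{TO70}, i.e. $\pi\pi' = p\omega$, which forces $\pi'$ to be a uniformizer precisely because $\pi$ is and $\mathcal{O}_K$ has ramification index $2$; the $p x^{p-1}$ contribution to $f'$ vanishes at $x = 0$, so no subtlety there, but one should be careful that the presentation $x^p = \pi' x$ (rather than some twisted form) is the correct one and that the line bundle being trivial means no further Frobenius-twist corrections enter. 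Everything else is formal, relying only on the complete-intersection formula for cotangent complexes and the standard computation of $L_{BG}$ in terms of $\mathrm{Lie}(G)$.
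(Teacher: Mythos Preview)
Your approach is essentially the same as the paper's: compute $L_{G/\mathcal{O}_K}$ from the explicit complete-intersection presentation, restrict to the identity to obtain the coLie complex $\mathcal{O}_K/(\text{uniformizer})$, and then invoke the identification $L_{BG} \simeq (\text{coLie complex})[-1]$ for commutative $G$ (the paper cites \cite[Proposition~4.4]{I72} for this last step). One small slip: the defining equation of $G = G^{\mathcal{O}_S}_{\pi,\pi'}$ is $x^p - \pi x$ (the ring structure on the underlying scheme comes from $a = \pi$, not $b = \pi'$), but since $\pi$ and $\pi'$ are both uniformizers this is harmless for the conclusion.
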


\begin{proof}
Observe that the equation of the underlying scheme of \(G\) is given by
\(x^p - \pi x\),
hence we know that \(L_{G/\mathcal{O}_K} \simeq \mathcal{O}_G/(\pi)\).
Therefore the underlying coLie complex of \(G\) is also \(\mathcal{O}_G/(\pi)\).
As \(G\) is commutative,
our statement follows 
from~\cite[Proposition 4.4]{I72}.\footnote{Note that the cotangent complex of \(BG\) is the coLie complex shifted by \(-1\).}
\end{proof}

\begin{remark}
\label{specialize cotangent complex}
In the proof of~\cite[Proposition 4.10]{ABM19}, 
the authors showed that the Postnikov tower
\(\mathcal{O}_{B\alpha_p} \to L_{B\alpha_p/\mathbb{F}_p} \xrightarrow{a} \mathcal{O}_{B\alpha_p}[-1]\)
of the cotangent complex of \(B\alpha_p\)
splits: \(L_{B\alpha_p/\mathbb{F}_p} \simeq 
\mathcal{O}_{B\alpha_p} \oplus \mathcal{O}_{B\alpha_p}[-1]\).
In our situation, we get a triangle in \(D(BG)\):
\[
\mathcal{O}[-1] \to L_{BG/\mathcal{O}_K} \to \mathcal{O}
\]
where the connecting morphism is multiplication by \(\pi\).
Specializing to the special fiber \(B\alpha_p\), we get a diagram
\[
\label{comparing cotangent complex}
\tag{\epsdice{3}}
\xymatrix{
\mathcal{O}[-1] \ar[r] \ar[d] & L_{BG/\mathcal{O}_K} \ar[r] \ar[d] \ar[dl]_{b} \ar[dr]^c & \mathcal{O} \ar[d] \\
\mathcal{O}/(\pi)\cdot u[-1] \ar[r] & L_{B\alpha_p/\mathbb{F}_p} \ar[r] \ar@/^1pc/[l]_a 
& \mathcal{O}/(\pi) \cdot s \\
}
\]
where \(b\) is the identification \(L_{BG/\mathcal{O}_K} \simeq \mathcal{O}/(\pi)[-1]\).
This gives a particular choice of the splitting of
\(L_{B\alpha_p/\mathbb{F}_p} \simeq 
\mathcal{O}_{B\alpha_p} s \oplus \mathcal{O}_{B\alpha_p}u[-1]\),
where the classes \(s\) and \(u\) are as in the statement of the aforementioned~\cref{computation of ABM}.
Here let us name the map \(sp \colon L_{BG/\mathcal{O}_K} \xrightarrow{b \oplus c} 
\mathcal{O}_{B\alpha_p}u[-1] \oplus \mathcal{O}_{B\alpha_p} s
\simeq L_{B\alpha_p/\mathbb{F}_p}\)
for we will use it later.

\end{remark}

Next let us compute the Hodge cohomology groups of \(BG\) and identify the algebra structure.

\begin{proposition}
\label{hodge cohomology of BG}
For any pair of integers \((i,j)\), we have
\[
H^i(BG, \wedge^j L_{BG/\mathcal{O}_K}) =
\begin{cases} 
      \mathcal{O}_K & i = j =0 \\
      \mathbb{F}_p & j = 0, i = 2m > 0 \text{ or } 0 < j \leq i \\
      0 & \text{otherwise.}
\end{cases}
\]
Therefore specialization maps give rise to injections
\(sp \colon H^i(BG, \wedge^j L_{BG/\mathcal{O}_K}) \hookrightarrow H^i(B\alpha_p, \wedge^jL_{B\alpha_p/\mathbb{F}_p})\)
whenever \(i+j > 0\).
Moreover these injections are compatible with multiplication and differentials,
and gives an identification
\[
H^*(BG, \wedge^* L_{BG/\mathcal{O}_K}) = 
\begin{cases} 
(\mathcal{O}_K[\beta, u] \otimes E(\tau))/(\pi \tau, \pi \beta, \pi u) & p > 2 \\
(\mathcal{O}_K[\beta, u, \tau])/(\pi \tau, \pi \beta, \pi u, \tau^2 - \beta u^2) & p = 2,
\end{cases}
\]
where \(\beta \in H^2(BG, \mathcal{O})\) and \(u \in H^1(BG, L_{BG/\mathcal{O}_K})\)
both specialize to the designated elements in the Hodge ring of \(B\alpha_p\),
and \(\tau \in H^2(BG, L_{BG/\mathcal{O}_K})\) specializes to \(\alpha u + \beta s\)
(up to scale \(s\) by a unit).
\end{proposition}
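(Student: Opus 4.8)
The plan is to reduce the whole computation to the one carried out by Antieau--Bhatt--Mathew (\cref{computation of ABM}), using the specialization morphism coming from the comparison of cotangent complexes in~\cref{specialize cotangent complex}.

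\emph{Additive part.} For $j\ge 1$, combining the identification $L_{BG/\mathcal{O}_K}\simeq\mathcal{O}/(\pi)[-1]$ from the preceding Lemma with the décalage isomorphism gives $\wedge^j L_{BG/\mathcal{O}_K}\simeq\mathcal{O}_{BG}/(\pi)[-j]$, a shift of the structure sheaf of the special fibre $B\alpha_p$; hence $H^i(BG,\wedge^j L_{BG/\mathcal{O}_K})\cong H^{i-j}(B\alpha_p,\mathcal{O})$, which is $\mathbb{F}_p$ for $i\ge j$ and $0$ otherwise by~\cref{computation of ABM}. For $j=0$ I would instead use base change: $\mathrm{R\Gamma}(BG,\mathcal{O}_{BG})\otimes^{\mathbb{L}}_{\mathcal{O}_K}\kappa\simeq\mathrm{R\Gamma}(B\alpha_p,\mathcal{O})$, while $\mathrm{R\Gamma}(BG,\mathcal{O}_{BG})\otimes^{\mathbb{L}}_{\mathcal{O}_K}K\simeq\mathrm{R\Gamma}(BG_K,\mathcal{O})\simeq K$ because the generic fibre $G_K$ is étale of order $p$. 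Chasing the universal-coefficient sequences pins $H^i(BG,\mathcal{O}_{BG})$ down to $\mathcal{O}_K$ for $i=0$, zero for odd $i$, and a nonzero cyclic $\pi$-power-torsion module for even $i>0$; that this torsion is exactly $\mathbb{F}_p$ is the one residual point, taken care of in the next step.

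\emph{The specialization map.} The comparison in~\cref{specialize cotangent complex} is a morphism of cotangent complexes, hence induces a map of bigraded rings
\[
sp\colon H^*(BG,\wedge^*L_{BG/\mathcal{O}_K})\longrightarrow H^*(B\alpha_p,\wedge^*L_{B\alpha_p/\mathbb{F}_p}),
\]
which by naturality is compatible with multiplication and with the (Hodge--de Rham) differentials. I would prove $sp$ is injective in every bidegree $(i,j)$ with $i+j>0$. For $j\ge1$ this is immediate from the splitting $sp=b\oplus c$: the component built from the isomorphism $b$ identifies $\wedge^jL_{BG/\mathcal{O}_K}$ with the ``$u$''-summand $\wedge^j(\mathcal{O}_{B\alpha_p}u[-1])$ of $\wedge^jL_{B\alpha_p/\mathbb{F}_p}$, so that component of $sp$ is already split injective. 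For $j=0$, $sp$ is the reduction $H^{2m}(BG,\mathcal{O}_{BG})\to H^{2m}(B\alpha_p,\mathcal{O})=\mathbb{F}_p$, whose injectivity amounts to the torsion in $H^{2m}(BG,\mathcal{O}_{BG})$ being killed by $\pi$; I would prove this by a hands-on computation of $\mathrm{R\Gamma}(BG,\mathcal{O}_{BG})$, e.g.\ through the fppf exact sequence $0\to G\to\mathbb{G}_a\xrightarrow{x\mapsto x^p-\pi x}\mathbb{G}_a\to 0$ realizing $G$ as a kernel, or through the cobar complex of the Hopf algebra $\mathcal{O}_G=\mathcal{O}_K[x]/(x^p-\pi x)$. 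This completes the additive statement and gives the stated injections.

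\emph{Multiplicative part.} Since $sp$ surjects onto the degree-$1$ and degree-$2$ generators of the ABM ring, I can choose $\beta\in H^2(BG,\mathcal{O})$, $u\in H^1(BG,L_{BG/\mathcal{O}_K})$ and $\tau\in H^2(BG,L_{BG/\mathcal{O}_K})$ reducing to $\beta$, $u$ and---after rescaling $s$ by a unit---$\alpha u+\beta s$; the last reduction is read off from the two components $b_*(\tau)$ and $c_*(\tau)$ of the diagram in~\cref{specialize cotangent complex}, each of which is an isomorphism onto its line. The relations $\pi\beta=\pi u=\pi\tau=0$ hold because those three groups are $\cong\mathbb{F}_p$ by Step~1, and the relation $\tau^2=0$ for $p>2$ (resp.\ $\tau^2=\beta u^2$ for $p=2$) follows by applying the injective ring map $sp$ and expanding $(\alpha u+\beta s)^2$ in $E(\alpha)\otimes P(\beta)\otimes E(s)\otimes P(u)$: for $p>2$ the two cross terms cancel by graded-commutativity while $\alpha^2=s^2=0$, and for $p=2$ one is left with $\alpha^2u^2=\beta u^2$. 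Finally, that there are no more relations is seen by checking that the displayed presentation has $\mathbb{F}_p$-dimension exactly $1$ in each bidegree $(i,j)$ with $i\ge j>0$ or with $i>j=0$ even, and $0$ elsewhere---matching Step~1---so that the injective $sp$ identifies $H^*(BG,\wedge^*L_{BG/\mathcal{O}_K})$ with its image.

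The point I expect to be hardest is the second half of Step~1 together with the $\pi$-torsion bound in Step~2: one must handle the derived exterior power $\wedge^jL_{BG/\mathcal{O}_K}$ with care (décalage over the non-flat $\mathcal{O}_K/\pi$), and the base-change bounds do not by themselves fix the torsion orders of $H^{2m}(BG,\mathcal{O}_{BG})$, so a genuine computation of the structure-sheaf cohomology of $BG$---through the diagram in~\cref{specialize cotangent complex} and the $\mathbb{G}_a$-extension---is really needed there.
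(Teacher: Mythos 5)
Most of your plan coincides with the paper's proof: the d\'ecalage identification $\wedge^j L_{BG/\mathcal{O}_K}\simeq \mathcal{O}_{BG}/(\pi)[-j]$ reducing the $j>0$ groups to $H^{i-j}(B\alpha_p,\mathcal{O})$, the injectivity of $sp$ from the $b\oplus c$ decomposition of \cref{specialize cotangent complex} together with all positive-degree groups being $\pi$-torsion, and the generators-and-relations-plus-dimension-count identification of the ring are all the paper's steps. The genuine gap is precisely the point you defer: that the torsion in $H^{2m}(BG,\mathcal{O}_{BG})$ is killed by $\pi$ (equivalently, equals $\mathbb{F}_p$ rather than, say, $\mathcal{O}_K/p=\mathcal{O}_K/\pi^2$). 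Your base-change/universal-coefficient bookkeeping only gives a nonzero cyclic $\pi$-power-torsion module, and no formal argument can do better, because the answer genuinely depends on which order-$p$ group you took: for $B\mu_p$ over the same $\mathcal{O}_K$ the identical formal reasoning applies, yet the positive even-degree groups are $\mathcal{O}_K/p$. Worse, the concrete tool you propose to close the gap does not exist: there is no exact sequence $0\to G\to\mathbb{G}_a\xrightarrow{\,x\mapsto x^p-\pi x\,}\mathbb{G}_a\to 0$ over $\mathcal{O}_K$. The map $x\mapsto x^p-\pi x$ is a group homomorphism only in characteristic $p$, the Tate--Oort comultiplication on $G=G^{\mathcal{O}}_{\pi,\pi'}$ is not the additive one, and $G$ cannot be a subgroup scheme of $\mathbb{G}_a$ at all, since its generic fibre is \'etale of order $p$ while $\mathbb{G}_{a,K}$ is torsion-free in characteristic $0$; that presentation is available only for the special fibre $\alpha_p$.

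Your fallback, the cobar complex of $\mathcal{O}_G=\mathcal{O}_K[x]/(x^p-\pi x)$, can in principle work, but it requires the comultiplication (the nontrivial Tate--Oort formula), not merely the algebra structure you quote, so as stated it is not yet a computation. The paper's route is Cartier duality: $H^*(BG,\mathcal{O})=\mathrm{Ext}^*_{\mathcal{O}_{G^\vee}}(\mathcal{O}_K,\mathcal{O}_K)$ with $G^\vee=G_{\pi',\pi}=\mathrm{Spec}\,\mathcal{O}_K[y]/(y^p-\pi'y)$, computed by the standard $2$-periodic resolution with differentials $y$ and $y^{p-1}-\pi'$, giving $\mathcal{O}_K$ in degree $0$, zero in odd degrees, and $\mathcal{O}_K/\pi'\cong\mathbb{F}_p$ in positive even degrees; this is exactly where the choice that $\pi'=p\omega/\pi$ is a uniformizer enters, and it is the input your outline is missing. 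One smaller point: your claim that the component $c_*(\tau)$ is ``an isomorphism onto its line'' is not automatic either; it needs the triangle $\mathcal{O}[-1]\to L_{BG/\mathcal{O}_K}\to\mathcal{O}$ with connecting map $\pi$ and the vanishing $H^1(BG,\mathcal{O})=0$ (again from the duality computation), as in the paper --- a detail, but one that again rests on the missing structure-sheaf calculation.
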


\begin{proof}
First we begin with the computation of cohomology of \(\mathcal{O}\).
Similar to the first paragraph of proof of~\cite[Proposition 4.10]{ABM19},
we have \(H^*(BG, \mathcal{O}) = \mathrm{Ext}^*_{\mathcal{O}_K[y]/(y^p - \pi' y)} (\mathcal{O}_K, \mathcal{O}_K)\)
by Cartier duality.
Here we used the fact that the Cartier dual of \(G_{\pi, \pi'}\) is \(G_{\pi', \pi}\)
whose underlying scheme structure is \(\mathrm{Spec}(\mathcal{O}_K[y]/(y^p - \pi' y))\)
with its identity section given by \(y = 0\).
Using the standard resolution:
\[
\left( \ldots \mathcal{O}_K[y]/(y^p - \pi' y) \xrightarrow{y^{p-1} - \pi'} \mathcal{O}_K[y]/(y^p - \pi' y) \xrightarrow{y} 
\mathcal{O}_K[y]/(y^p - \pi' y) \right) \simeq \mathcal{O}_K
\]
one verifies the computation when \(j = 0\).

For the case when \(j > 0\), just observe that we have 
\[
\wedge^* L_{BG/\mathcal{O}_K} = \wedge^* \left(\mathcal{O}/(\pi)[-1]\right) = \mathrm{Sym}^* (\mathcal{O}/(\pi))[-*].
\]
Therefore we get
\[
H^i(BG, \wedge^j L_{BG/\mathcal{O}_K}) = H^i(BG, \mathrm{Sym}^j \mathcal{O}/(\pi)[-j])
= H^{i-j}(B\alpha_p, \mathcal{O}),
\]
which verifies the computation when \(j > 0\) via~\cref{computation of ABM}.

The second statement follows from the fact that 
\(H^i(BG, \wedge^j L_{BG/\mathcal{O}_K})\) are all \(\pi\)-torsion when \(i + j > 0\) by the first sentence.
In particular, by dimension consideration we see that the induced map
\(H^2(BG, \mathcal{O}) \to H^2(B\alpha_p, \mathcal{O})\)
must be an isomorphism.
Hence we may pick a generator \(\beta \in H^2(BG, \mathcal{O})\)
which lifts the designated generator in \(H^2(B\alpha_p, \mathcal{O})\).

Next we deal with the statement concerning image of other specialization maps.
Since the map \(b\) in~\ref{comparing cotangent complex} is an identification,
we see that the \(b\) component of
\[
H^*(BG, L_{BG/\mathcal{O}_K}) \xrightarrow{sp = b \oplus c} 
H^*(B\alpha_p, L_{B\alpha_p/\mathbb{F}_p}) = H^{*-1}(B\alpha_p, \mathcal{O}) \cdot u
\oplus H^{*}(B\alpha_p, \mathcal{O}) \cdot s
\]
is always an isomorphism.
In particular we can choose generators of
$H^1(BG, L_{BG/\mathcal{O}_K})$ and $H^2(BG, L_{BG/\mathcal{O}_K})$
corresponding to $u$ and $\alpha \cdot u$ under $b$.
Let us denote each generator by $u$ and $\tau$ respectively.

The map \(c\) factors as the composition of
\(L_{BG/\mathcal{O}_K} \to \mathcal{O} \to \mathcal{O}/(\pi) \cdot s\).
The first map fits in the triangle 
\[
\mathcal{O}[-1] \to L_{BG/\mathcal{O}_K} \cong \mathcal{O}/\pi[-1] 
\to \mathcal{O}
\]
with connecting morphism being multiplication by $\pi$.
Since \(H^1(BG, \mathcal{O})\) is shown to be zero, we see that the map $c$
on $H^1(BG, L_{BG/\mathcal{O}_K})$ factors through zero,
hence $c(u) = 0$.
Still by the computation of \(H^*(BG, \mathcal{O})\), the long exact sequence
of cohomology associated with the above triangle gives an isomorphism
$H^2(BG, L_{BG/\mathcal{O}_K}) \to H^2(BG, \mathcal{O})$.
The map $\mathcal{O} \to \mathcal{O}/(\pi) \cdot s$ is just modulo $\pi$,
hence induces an injection $H^2(BG, \mathcal{O})/\pi \to 
H^2(B\alpha_p, \mathcal{O}/(\pi) \cdot s)$.
Now the cohomology group $H^2(BG, \mathcal{O})$ is shown to be $\mathcal{O}_K/\pi$
and the dimension of $H^2(B\alpha_p, \mathcal{O}/(\pi) \cdot s)$ is $1$,
we see that the map $H^2(BG, \mathcal{O})/\pi \to 
H^2(B\alpha_p, \mathcal{O}/(\pi) \cdot s)$ is an isomorphism.
Therefore we have $c(\tau) = \beta \cdot s$ (up to a unit).
Putting these together, we have
$sp(u) = b(u) + c(u) = u$ and $sp(\tau) = b(\tau) + c(\tau) = \alpha \cdot u
+ \beta \cdot s$.

For the last sentence, let us just prove the case when \(p > 2\),
the case of \(p = 2\) can be proved in the same way.
First we observe that we have 
\[
(\mathcal{O}_K[\beta, u] \otimes E(\tau))
\xrightarrow{f} H^*(BG, \wedge^* L_{BG/\mathcal{O}_K}) \xrightarrow{sp} 
H^*(B\alpha_p, \wedge^*L_{B\alpha_p/\mathbb{F}_p}),
\]
with \(\beta\), \(u\) and \(\tau\) as in the statement.
The map \(f\) must kill the relations
\(\pi \tau, \pi \beta, \pi u\), as the positive degree Hodge groups of \(BG\) are \(\pi\)-torsion.
After quotient out the relations, we get an injection
\[
(\mathcal{O}_K[\beta, u] \otimes E(\tau))/(\pi \tau, \pi \beta, \pi u)
\xrightarrow{sp \circ f} H^*(B\alpha_p, \wedge^*L_{B\alpha_p/\mathbb{F}_p})
\]
on positive degree part because of~\cref{computation of ABM}.
Hence the map \(f\) induces an injection
\[
(\mathcal{O}_K[\beta, u] \otimes E(\tau))/(\pi \tau, \pi \beta, \pi u)
\xrightarrow{f} H^*(BG, \wedge^* L_{BG/\mathcal{O}_K}).
\]
By explicitly comparing dimensions of each bi-degree parts, 
one concludes that \(f\) must also be surjective, hence an isomorphism.
\end{proof}

Finally, we can understand the Hodge--de Rham spectral sequence of \(BG\)
with the aid of~\cite[Proposition 4.12]{ABM19}.

\begin{proposition}
\label{HdR of BG}
In the Hodge--de Rham spectral sequence of \(BG\), we have (up to unit) \(d_1(\tau) = u^2\)
and \(d_1(\beta) = d_1(u) = 0\) for all \(p\).
The de Rham cohomology of \(BG\) is given by
\[
H^*_{\mathrm{dR}}(BG/\mathcal{O}_K) \simeq \mathcal{O}_K[\beta']/(p \beta'),
\]
where \(\beta'\) has degree \(2\).
\end{proposition}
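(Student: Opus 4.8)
The plan is to extract the statement from the Hodge--de Rham spectral sequence $E_1^{i,j}=H^j(BG,\wedge^i L_{BG/\mathcal{O}_K})\Rightarrow H^{i+j}_{\mathrm{dR}}(BG/\mathcal{O}_K)$, whose $E_1$-page and ring structure are recorded in \cref{hodge cohomology of BG}. Since that Proposition presents $E_1$ as the $\mathcal{O}_K$-algebra generated by $\beta$, $u$, $\tau$ (modulo the listed relations) and $d_1$ is a derivation by multiplicativity, it suffices to compute $d_1$ on those three classes. First, $d_1(u)$ lies in $H^1(BG,\wedge^2 L_{BG/\mathcal{O}_K})$, which vanishes by the formula in \cref{hodge cohomology of BG} (here $j=2>i=1$), so $d_1(u)=0$ with no work. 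For the other two I would transport the corresponding differentials for $B\alpha_p$ --- determined by Antieau--Bhatt--Mathew in \cite[Proposition 4.12]{ABM19} --- along the specialization map $sp$, which by \cref{hodge cohomology of BG} is injective in positive total degree and compatible with products and with $d_1$, and which sends $\tau\mapsto\alpha u+\beta s$. Concretely: $d_1(\beta)\in H^2(BG,L_{BG/\mathcal{O}_K})=\mathbb{F}_p\cdot\tau$, while $sp(d_1\beta)=d_1(\beta)$ vanishes in the spectral sequence of $B\alpha_p$; since $sp(\tau)=\alpha u+\beta s\neq 0$, injectivity forces $d_1(\beta)=0$. Likewise $d_1(\tau)\in H^2(BG,\wedge^2 L_{BG/\mathcal{O}_K})=\mathbb{F}_p\cdot u^2$, and $sp(d_1\tau)=d_1(\alpha u+\beta s)$ equals $u^2$ up to a unit by \cite[Proposition 4.12]{ABM19}; as $sp(u^2)=u^2\neq 0$, this pins down $d_1(\tau)=u^2$ up to a unit. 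The case $p=2$ is handled identically using the $p=2$ part of \cite[Proposition 4.12]{ABM19}.

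Next comes a routine algebraic computation of $E_2=H^*(E_1,d_1)$. Viewing $d_1$ as a derivation killing $\beta$ and $u$ with $d_1(\tau)=u^2$, one finds $E_2$ supported in wedge-degrees $0$ and $1$: in wedge-degree $0$ it is $\mathcal{O}_K$ in total degree $0$ and a copy of $\mathbb{F}_p$ spanned by $\beta^m$ in each total degree $2m>0$; in wedge-degree $1$ it is a copy of $\mathbb{F}_p$ spanned by $\beta^m u$ in each total degree $2m+1$. Since $E_2$ is concentrated in wedge-degrees $\{0,1\}$ and each $d_r$ ($r\geq 2$) changes wedge-degree by $r\geq 2$, all higher differentials vanish and $E_2=E_\infty$. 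Therefore $H^0_{\mathrm{dR}}(BG/\mathcal{O}_K)=\mathcal{O}_K$, $H^{\mathrm{odd}}_{\mathrm{dR}}(BG/\mathcal{O}_K)=0$, and for $m\geq 1$ the group $H^{2m}_{\mathrm{dR}}(BG/\mathcal{O}_K)$ has length $2$ and sits in an extension $0\to\mathbb{F}_p\to H^{2m}_{\mathrm{dR}}\to\mathbb{F}_p\to 0$ whose sub is the image of $\beta^{m-1}u$ (the $\mathrm{Gr}^1$-piece) and whose quotient is spanned by $\beta^m$ (the $\mathrm{Gr}^0$-piece).

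The crux --- and the place where ramification is essential --- is to see that this extension is the \emph{non-split} one, i.e.\ $H^{2m}_{\mathrm{dR}}(BG/\mathcal{O}_K)\cong\mathcal{O}_K/p\mathcal{O}_K$ rather than $(\mathcal{O}_K/\pi)^{2}$; over $\mathbb{F}_p$ there would be nothing to prove here. I would argue by base change: as $BG$ is flat over $\mathcal{O}_K$ with special fiber $B\alpha_p$, derived de Rham cohomology base-changes along $\mathcal{O}_K\to\mathbb{F}_p$, giving $R\Gamma_{\mathrm{dR}}(BG/\mathcal{O}_K)\otimes^{\mathbb{L}}_{\mathcal{O}_K}\mathbb{F}_p\simeq R\Gamma_{\mathrm{dR}}(B\alpha_p/\mathbb{F}_p)$ (the graded pieces $R\Gamma(BG,\wedge^i L_{BG/\mathcal{O}_K})$ base-change to $R\Gamma(B\alpha_p,\wedge^i L_{B\alpha_p/\mathbb{F}_p})$ by cotangent-complex base change, compatibly with the diagram of \cref{specialize cotangent complex}). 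Since $\dim_{\mathbb{F}_p}H^n_{\mathrm{dR}}(B\alpha_p/\mathbb{F}_p)=1$ for every $n\geq 0$ --- either from \cite[Proposition 4.12]{ABM19}, or by rerunning the spectral-sequence computation above over $\mathbb{F}_p$, where no extension problem remains --- the universal coefficient exact sequence yields $\dim_{\mathbb{F}_p}\mathrm{Tor}_1^{\mathcal{O}_K}(H^{2m}_{\mathrm{dR}}(BG/\mathcal{O}_K),\mathbb{F}_p)=1$. Among the length-$2$ torsion $\mathcal{O}_K$-modules only $\mathcal{O}_K/p\mathcal{O}_K=\mathcal{O}_K/\pi^2$ has this property, so $H^{2m}_{\mathrm{dR}}(BG/\mathcal{O}_K)\cong\mathcal{O}_K/p\mathcal{O}_K$. (Alternatively one could compute $H^2_{\mathrm{dR}}(BG/\mathcal{O}_K)$ directly from a bar/Čech model for $BG$, following \cite{ABM19}; I expect the base-change route to be shorter.)

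Finally, I would promote this to the asserted ring structure. Pick a generator $\beta'$ of $H^2_{\mathrm{dR}}(BG/\mathcal{O}_K)\cong\mathcal{O}_K/p\mathcal{O}_K$, so $p\beta'=0$; in the associated graded, $\beta'$ maps to the generator $\beta$ of $\mathrm{Gr}^0$ and $\pi\beta'$ maps to the generator $u$ of $\mathrm{Gr}^1$. Using multiplicativity of the Hodge filtration, $(\beta')^m$ maps to $\beta^m\neq 0$ in $\mathrm{Gr}^0 H^{2m}_{\mathrm{dR}}$ and $\pi(\beta')^m=(\pi\beta')(\beta')^{m-1}$ maps to $\beta^{m-1}u\neq 0$ in $\mathrm{Gr}^1 H^{2m}_{\mathrm{dR}}$, so $(\beta')^m$ generates $H^{2m}_{\mathrm{dR}}\cong\mathcal{O}_K/p\mathcal{O}_K$ for every $m\geq 1$; together with vanishing in odd degrees this gives $H^*_{\mathrm{dR}}(BG/\mathcal{O}_K)\cong\mathcal{O}_K[\beta']/(p\beta')$.
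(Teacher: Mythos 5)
Your proposal is correct and takes essentially the same route as the paper: the differentials $d_1(\tau)=u^2$, $d_1(\beta)=d_1(u)=0$ are obtained by pushing Antieau--Bhatt--Mathew's computation through the multiplicative, injective specialization map, the resulting $E_2$-page forces degeneration, the one-dimensionality of $H^n_{\mathrm{dR}}(B\alpha_p/\mathbb{F}_p)$ resolves the extension problem (your $\mathrm{Tor}_1$/universal-coefficient count is the same base-change fact the paper phrases as the injection $H^n_{\mathrm{dR}}(BG/\mathcal{O}_K)/\pi \hookrightarrow H^n_{\mathrm{dR}}(B\alpha_p/\mathbb{F}_p)$), and multiplicativity gives the ring structure (you use multiplicativity of the Hodge filtration internally where the paper uses the multiplicative specialization to $H^*_{\mathrm{dR}}(B\alpha_p/\mathbb{F}_p)$; both work). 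One harmless slip: $\beta^m u$ contributes in total de Rham degree $2m+2$ (cohomological degree $2m+1$, wedge degree $1$), not total degree $2m+1$, but your subsequent identification of the extension $0\to \mathbb{F}_p\cdot \beta^{m-1}u \to H^{2m}_{\mathrm{dR}} \to \mathbb{F}_p\cdot \beta^m \to 0$ is stated correctly, so nothing propagates.
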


\begin{proof}
The first sentence follows from the proof of~\cite[Proposition 4.12]{ABM19}
and the fact that specialization gives injection
\[
sp \colon H^i(BG, \wedge^j L_{BG/\mathcal{O}_K}) 
\hookrightarrow H^i(B\alpha_p, \wedge^jL_{B\alpha_p/\mathbb{F}_p})
\]
which is compatible with multiplication and differentials.
Indeed, in their proof, the authors show that in the special fiber we have
$d_1(\alpha) = u$ (up to a unit) and all other generators are killed by differentials. 
Hence the specialization of our $d_1(\tau)$ must be $d_1(\alpha u + \beta s) = d_1(\alpha) u = u^2$
(up to a unit).
Since specialization is injective in positive cohomological degrees (by \cref{hodge cohomology of BG}), we conclude that $d_1(\tau) = u^2$.

Using the fact that \(d_1\) is a differential, we see that on the \(E_2\)-page
the non-zero entries are 
\[
E_2^{i,j} =
\begin{cases}
\mathcal{O}_K & i = j = 0 \\
\mathbb{F}_p \cdot \beta^n & i = 0, j = 2n > 0 \\
\mathbb{F}_p \cdot \beta^n u & i = 1, j = 2n + 1 > 0. \\
\end{cases}
\]
In particular, there is no room for nonzero differentials, hence the spectral sequence
degenerates on \(E_2\) page.
In particular, we see that the length of de Rham cohomology is as described in the
statement of this Proposition.
To pin down the \(\mathcal{O}_K\)-module structure of 
\(H^*_{\mathrm{dR}}(BG/\mathcal{O}_K)\), we use the fact that
\(H^i_{\mathrm{dR}}(BG/\mathcal{O}_K)/\pi\) injects into \(H^i_{\mathrm{dR}}(B\alpha_p/\mathbb{F}_p)\)
which is always one-dimensional for \(i \geq 0\) due to~\cite[Proposition 4.10]{ABM19}.

Lastly, pick a preimage of \(\beta\) under 
\(H^2_{\mathrm{dR}}(BG/\mathcal{O}_K) \twoheadrightarrow H^2(BG,\mathcal{O})\), 
denote it by \(\beta' \in 
H^2_{\mathrm{dR}}(BG/\mathcal{O}_K)\).
Since \(H^*_{\mathrm{dR}}(BG/\mathcal{O}_K) \to H^*_{\mathrm{dR}}(B\alpha_p/\mathbb{F}_p)\)
is a map preserving multiplication, we see that \(\beta'^n\) is a generator
of \(H^{2n}_{\mathrm{dR}}(BG/\mathcal{O}_K)\).
This finishes the proof of the ring structure on \(H^*_{\mathrm{dR}}(BG/\mathcal{O}_K)\).
\end{proof}

Similarly, we can understand the Hodge--Tate spectral sequence of \(BG\)
with the aid of~\cite[Remark 4.13]{ABM19}.

\begin{proposition}
\label{HT of BG}
In the Hodge--Tate spectral sequence of \(BG\), we have (up to unit) \(d_2(\tau) = \beta^2\)
and \(d_2(\beta) = d_2(u) = 0\) for all \(p\).
The Hodge--Tate cohomology of \(BG\) is given by
\[
H^*_{\mathrm{HT}}(BG/\mathcal{O}_K) \simeq \mathcal{O}_K[u']/(p u'),
\]
where \(u'\) has degree \(2\).
\end{proposition}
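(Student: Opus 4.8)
The plan is to follow the proof of~\cref{HdR of BG} almost verbatim, replacing the Hodge--de Rham input~\cite[Proposition 4.12]{ABM19} for $B\alpha_p$ by the Hodge--Tate input~\cite[Remark 4.13]{ABM19}, and transporting all statements to $BG$ along the specialization injections $sp\colon H^i(BG,\wedge^j L_{BG/\mathcal{O}_K})\hookrightarrow H^i(B\alpha_p,\wedge^j L_{B\alpha_p/\mathbb{F}_p})$ of~\cref{hodge cohomology of BG}, valid for $i+j>0$, compatible with products and with all differentials, and satisfying $sp(\beta)=\beta$, $sp(u)=u$, $sp(\tau)=\alpha u+c\,\beta s$ for a unit $c$.

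First I would settle the differentials. By~\cref{prisms} (2) and~\cref{hodge cohomology of BG} the $E_2$-page of the Hodge--Tate spectral sequence of $BG$ is the bigraded ring $(\mathcal{O}_K[\beta,u]\otimes E(\tau))/(\pi\tau,\pi\beta,\pi u)$ for $p>2$ (the $p=2$ case being similar), with $\beta\in E_2^{2,0}$, $u\in E_2^{1,1}$, $\tau\in E_2^{2,1}$ and $d_2\colon E_2^{i,j}\to E_2^{i+2,j-1}$. The vanishings $d_2(\beta)=d_2(u)=0$ are automatic, the targets $E_2^{4,-1}$ and $E_2^{3,0}=H^3(BG,\mathcal{O})$ being zero. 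For $d_2(\tau)$, whose only possible target is $E_2^{4,0}=H^4(BG,\mathcal{O})=\mathbb{F}_p\,\beta^2$, I would use~\cite[Remark 4.13]{ABM19}: in the Hodge--Tate spectral sequence of $B\alpha_p$ one has $d_2(s)=\beta$ up to a unit and $d_2(\alpha)=d_2(\beta)=0$. Applying $sp$, and using $\alpha^2=0$ to kill $\alpha\cdot d_2(u)$, one gets $sp(d_2(\tau))=d_2(\alpha u+c\,\beta s)=\pm c\,\beta\cdot d_2(s)\neq 0$ in $H^4(B\alpha_p,\mathcal{O})$; since $sp$ is injective on $H^4(BG,\mathcal{O})$, this forces $d_2(\tau)=\beta^2$ up to a unit.

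Next I would compute the abutment. As $d_2$ is the derivation with $d_2(\beta)=d_2(u)=0$, $d_2(\tau)=\beta^2$, one has $d_2(\beta^a u^b)=0$ and $d_2(\beta^a u^b\tau)=\pm\beta^{a+2}u^b$; hence $\ker d_2$ is spanned by the $\tau$-free monomials and $\mathrm{Im}\,d_2$ by those with $a\ge 2$, so in positive degree the $E_3$-page consists exactly of the classes $u^b$ ($b\ge 1$) and $\beta u^b$ ($b\ge 0$), placed on the two lines $\{i=j\}$ and $\{i=j+2\}$. A differential $d_r$ with $r\ge 3$ shifts bidegree by $(r,-r+1)$, which cannot move one of these lines onto the other, so $E_3=E_\infty$. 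Reading off the conjugate filtration gives $H^0_{\mathrm{HT}}(BG/\mathcal{O}_K)=\mathcal{O}_K$, $H^{2m}_{\mathrm{HT}}(BG/\mathcal{O}_K)$ of length $2$ with graded pieces $\mathbb{F}_p u^m$ and $\mathbb{F}_p\,\beta u^{m-1}$ for $m\ge 1$, and $H^n_{\mathrm{HT}}(BG/\mathcal{O}_K)=0$ for odd $n$.

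Finally I would fix the module and ring structure. Exactly as in the de Rham case of~\cref{HdR of BG}, reduction to the special fiber injects $H^{2m}_{\mathrm{HT}}(BG/\mathcal{O}_K)/\pi$ into a one-dimensional $\mathbb{F}_p$-space; so a length-$2$ torsion $\mathcal{O}_K$-module with at most one-dimensional reduction modulo $\pi$ must be $\mathcal{O}_K/\pi^2=\mathcal{O}_K/p$, giving $H^{2m}_{\mathrm{HT}}(BG/\mathcal{O}_K)\cong\mathcal{O}_K/p$ for $m\ge 1$. Picking $u'\in H^2_{\mathrm{HT}}(BG/\mathcal{O}_K)$ mapping to a generator of the top conjugate graded piece $\mathbb{F}_p u$, the ring homomorphism to the special fiber carries $u'^m$ to a generator, so $u'^m$ generates $H^{2m}_{\mathrm{HT}}(BG/\mathcal{O}_K)$, whence $H^*_{\mathrm{HT}}(BG/\mathcal{O}_K)\simeq\mathcal{O}_K[u']/(pu')$; the case $p=2$ is analogous, using the $p=2$ presentation of~\cref{hodge cohomology of BG}. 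The one genuinely delicate step is extracting the Hodge--Tate differential of $B\alpha_p$ from~\cite{ABM19} with the correct source and target and checking that $sp$ intertwines it with $d_2$ on $BG$; granted that, everything else is the bookkeeping already carried out for~\cref{HdR of BG}.
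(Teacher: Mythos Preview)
Your proposal is correct and follows essentially the same approach as the paper. You have correctly identified the one genuinely non-formal step---that the specialization map $sp$ intertwines the Hodge--Tate $d_2$ on $BG$ with the conjugate $d_2$ on $B\alpha_p$---and you explicitly flag it as the delicate point while granting it. The paper's proof supplies precisely this missing justification: reduction modulo $u$ gives a map of prisms $(\mathfrak{S},(E))\to(\mathbb{Z}_p,(p))$, and since $u\mapsto\pi$ under $\mathfrak{S}\to\mathcal{O}_K$, the Hodge--Tate spectral sequence of $BG$ reduces modulo $\pi$ to the Hodge--Tate spectral sequence of $B\alpha_p$ over the prism $(\mathbb{Z}_p,(p))$, which in turn is identified with the conjugate spectral sequence of $B\alpha_p$ (this last identification being the content of \cite[Theorem~1.8]{BS19} over the crystalline prism). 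Once this is in place, the paper simply says the rest is ``almost the same as the proof of~\cref{HdR of BG}'', which is exactly the bookkeeping you carry out in detail; your explicit $E_3$ computation and degeneration argument are a faithful expansion of what the paper leaves implicit.

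One small remark: in your first paragraph you invoke \cref{hodge cohomology of BG} for compatibility of $sp$ with ``all differentials'', but that proposition only addresses the Hodge--de Rham differentials. The Hodge--Tate compatibility is not covered there and genuinely requires the prismatic base-change argument above, so your final caveat is well placed.
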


\begin{proof}
Recall that in characteristic $p$, conjugate spectral sequence comes from the canonical filtration on
the de Rham complex of affine opens.
Similarly when working with a prism $(A,I)$,
the Hodge--Tate spectral sequence comes from the canonical filtration on the sheaf $\overline{\Prism}_{-/(A/I)}$.
When the prism is $(\mathbb{Z}_p, (p))$, one can ignore the Frobenius twist, and hence identify the sheaf
$\overline{\Prism}_{-/\mathbb{F}_p}$ with the sheaf of (relative to $\mathbb{F}_p$) de Rham complex
(by either of \cite[Theorem 1.8.(1) or (3)]{BS19}).
Therefore in this case, the Hodge--Tate spectral sequence is identified with the conjugate spectral sequence.

The reduction modulo $u$ gives rise to a map of prisms: $(\mathfrak{S}, (E)) \to (\mathbb{Z}_p, (p))$.
Since $u$ is mapped to $\pi$ under $\mathfrak{S} \to \mathcal{O}_K$, we see that the Hodge--Tate spectral sequence
of $BG$ specializes, under reduction modulo $\pi$, to the conjugate spectral sequence of $B\alpha_p$.

Now the differentials in the conjugate spectral sequence of $B\alpha_p$ are understood in \cite[Remark 4.13]{ABM19}.
Using their Remark, the proof of this Proposition is almost the same as the proof of~\cref{HdR of BG},
except we now have \(d_2(\alpha) = d_2(\beta) = d_2(u) = 0\)
and \(d_2(s) = \beta\) in the special fiber.
The multiplicative structure is justified by the fact that de Rham and Hodge--Tate cohomologies
are the same over $\mathbb{F}_p$. Hence the even degree part of the Hodge--Tate cohomology of $B\alpha_p$
is also a polynomial algebra with a degree $2$ generator. 
\end{proof}

In particular, both of the Hodge--de Rham and Hodge--Tate spectral sequences 
are non-degenerate with nonzero differentials starting at degree \(3\),
and the Hodge (resp.~conjugate) filtrations on de Rham (resp.~Hodge--Tate) 
cohomology is not split starting at degree \(2\).
When the prime is \(p \geq 11\), these give rise to stacky examples satisfying condition (2)
of the main theorem which violates the conclusion.
The obstruction of lifting \(G\) to \(\mathrm{Spec}(\mathfrak{S}/(E^2))\)
specializes (under modulo \(u\)) to the obstruction of lifting 
\(\alpha_p\) to \(\mathrm{Spec}(W_2)\),
which is nonzero. 

Let us take a closer look at degree $2$.
By \cref{HdR of BG} we have a short exact sequence
\[
0 \to H^1(BG, L_{BG/\mathcal{O}_K}) = \mathbb{F}_p \cdot u \to H^2_{\mathrm{dR}}(BG/\mathcal{O}_K) = \left(\mathcal{O}_K/p\right) \cdot \beta'
\]
\[
\to H^2(BG, \mathcal{O}_{BG}) = \mathbb{F}_p \cdot \beta \to 0.
\]
Therefore $\beta'$ lifts $\beta$, and $\pi \cdot \beta' = u$, up to units.
The latter also ``explains'' why $u^2 = 0$ in de Rham cohomology (as $(\pi^2) = (p)$ by our assumption of $e = 2$).
Similarly by \cref{HT of BG} we have a short exact sequence
\[
0 \to H^2(BG, \mathcal{O}_{BG}) = \mathbb{F}_p \cdot \beta \to H^2_{\mathrm{HT}}(BG/\mathcal{O}_K) = \left(\mathcal{O}_K/p\right) \cdot \beta'
\]
\[
\to H^1(BG, L_{BG/\mathcal{O}_K}) = \mathbb{F}_p \cdot u \to 0.
\]
Now up to units, we have $u'$ lifts $u$ and $\beta = \pi \cdot u'$. Again, $\beta^2 = 0$ can be seen
by the fact that $p$ divides $\pi^2$ (actually they only differ by a unit in $\mathcal{O}_K$).

We can also determine the prismatic cohomology of \(BG\) using~\cref{HT of BG}.
Before that, we need a few words about prismatic cohomology of a smooth proper stack.
\begin{remark}
Throughout this remark, let us focus on the Breuil--Kisin prism $(\mathfrak{S},(E))$ associated with $\pi \in \mathcal{O}_K$.

(1) Bhatt--Scholze showed that the prismatic cohomology satisfies
(quasi-)syntomic descent~\cite[Theorem 1.15.(2)]{BS19}.
Hence the presheaf, valued in the derived $\infty$-category $D(\mathfrak{S})$, on
$\mathrm{Syn}^{op}_{\mathcal{O}_K}$ (the syntomic site of $\mathcal{O}_K$)
sending $R$ to $\mathrm{R\Gamma}_{\Prism}(\mathrm{Spf}(\widehat{R})/\mathfrak{S})$ is a sheaf.
Here $\widehat{R}$ denotes the $p$-adic completion of $R$.
Since our group scheme $G$ is syntomic over $\mathcal{O}_K$, we know that $BG$ over $\mathcal{O}_K$ is a syntomic stack,
see \cite[Notation 2.1]{ABM19}.
In \cite[Construction 2.7]{ABM19}, one finds a definition of $\mathrm{R\Gamma}_{\Prism}(BG/\mathfrak{S})$.
Concretely, given any syntomic cover $U \to BG$ with $U$ a syntomic $\mathcal{O}_K$-scheme,
we have
\[
\mathrm{R\Gamma}_{\Prism}(BG/\mathfrak{S}) \simeq \lim_{[m] \in \Delta} \mathrm{R\Gamma}_{\Prism}(\widehat{U}^m/\mathfrak{S}).
\]
Here $\widehat{U}^m$ denotes the $p$-adic formal completion of the $(m+1)$-copies fiber product of $U \to BG$.
The syntomic sheaf property exactly guarantees that this formula does not depend on the choice of the syntomic cover $U \to BG$.

(2) All the results stated previously
concerning prismatic cohomology of smooth proper formal schemes (e.g.~a natural Frobenius structure,~\cref{prisms} and~\cref{structure of prismatic}) 
still hold verbatim for $\mathrm{R\Gamma}_{\Prism}(BG/\mathfrak{S})$.
This is because most of the statements in \cite{BS19} are shown by proving their analogues for affine formal schemes.
Below let us show that all the prismatic cohomology groups of $BG$ are finitely generated.

Since the sheaf \(\Prism_{-/\mathfrak{S}}\) is derived $(p, E)$-complete,
the resulting cohomology groups $H^*_{\Prism}(BG/\mathfrak{S})$ 
are also derived $(p, E)$-complete, as derived completeness is preserved under taking limit.
Since the Hodge--Tate cohomology groups of $BG/\mathfrak{S}$ are finitely generated over $\mathcal{O}_K$
(by \cref{HT of BG}),
derived Nakayama Lemma implies that all the cohomology groups
$H^*_{\Prism}(BG/\mathfrak{S})$ are also finitely generated over $\mathfrak{S}$.

(3) We claim that $H^n_{\Prism}(BG/\mathfrak{S})$ is a Breuil--Kisin module \cite[Definition 4.1]{BMS1} for all $n$.
This follows from the fact that $BG$ is a quasi-compact separated smooth stack over $\mathcal{O}_K$
and the same argument laid out in \cite{BS19}.
Below let us spell out the argument for the sake of rigorous.

For any affine smooth formal scheme $\mathcal{X}$ over $\mathcal{O}_K$, the Frobenius on its prismatic cohomology has an isogeny
property \cite[Theorem 1.15.(4)]{BS19}:
the Frobenius induces a canonical isomorphism
\[
\phi_{\mathfrak{S}}^*\mathrm{R\Gamma}_{\Prism}(\mathcal{X}/\mathfrak{S}) \xrightarrow[\phi_{\mathcal{X}}]{\cong} 
L\eta_{E}\mathrm{R\Gamma}_{\Prism}(\mathcal{X}/\mathfrak{S}).
\]
We direct readers to \cite[Section 6]{BMS1} for a discussion of the $L\eta$ functor.
In particular \cite[Lemma 6.9]{BMS1} implies that, for any $n$, we have a functorial map 
\[
\psi_{\mathcal{X}}^n \colon \tau^{\leq n}\mathrm{R\Gamma}_{\Prism}(\mathcal{X}/\mathfrak{S})
\to \tau^{\leq n} \phi_{\mathfrak{S}}^*\mathrm{R\Gamma}_{\Prism}(\mathcal{X}/\mathfrak{S})
\]
such that its composition with the Frobenius in either direction is given by multiplying $E^n$.

Since the group scheme $G$ is finite flat over $\mathcal{O}_K$, 
we know that $BG$ is quasi-compact, separated, and smooth over $\mathcal{O}_K$.
For a justification of the smoothness, see \cite[\href{https://stacks.math.columbia.edu/tag/0DLS}{Tag 0DLS}]{stacks-project}.
Therefore we may find a smooth cover $X \to BG$ with $X$ being an affine scheme and smooth over $\mathcal{O}_K$.
Together with the separatedness of $BG \to \mathcal{O}_K$, all of the $\widehat{X}^m$'s are smooth affine
formal schemes over $\mathcal{O}_K$.

Using the smooth cover $X \to BG$ in the last paragraph, we have
\[
\mathrm{R\Gamma}_{\Prism}(BG/\mathfrak{S}) \simeq \lim_{[m] \in \Delta} \mathrm{R\Gamma}_{\Prism}(\widehat{X}^m/\mathfrak{S}).
\]
The Frobenius is the totalization of the Frobenius on each of the $\mathrm{R\Gamma}_{\Prism}(\widehat{X}^m/\mathfrak{S})$.
Fix a positive integer $n$, using the relation between canonical truncation and limit, we get:
\[
\tau^{\leq n}\mathrm{R\Gamma}_{\Prism}(BG/\mathfrak{S}) \simeq 
\tau^{\leq n}\lim_{[m] \in \Delta} \tau^{\leq n}\mathrm{R\Gamma}_{\Prism}(\widehat{X}^m/\mathfrak{S}).
\]
Because $\phi_{\mathfrak{S}}$ is finite flat, we also have
\[
\tau^{\leq n}\phi_{\mathfrak{S}}^*\mathrm{R\Gamma}_{\Prism}(BG/\mathfrak{S}) \simeq \tau^{\leq n}\lim_{[m] \in \Delta}
\tau^{\leq n}\phi_{\mathfrak{S}}^*\mathrm{R\Gamma}_{\Prism}(\widehat{X}^m/\mathfrak{S}).
\]
Lastly we totalize the maps $\psi_{\widehat{X}^m}^n$ to get a map
\[
\psi_{BG}^n \colon \tau^{\leq n}\mathrm{R\Gamma}_{\Prism}(BG/\mathfrak{S})
\to \tau^{\leq n} \phi_{\mathfrak{S}}^*\mathrm{R\Gamma}_{\Prism}(BG/\mathfrak{S}).
\]
This map composes with the Frobenius in either direction is given by multiplying $E^n$,
as it is so for all of the maps $\psi_{\widehat{X}^m}^n$.
In particular, we see that $H^n_{\Prism}(BG/\mathfrak{S})$ is a Breuil--Kisin module (of height $n$).
\end{remark}

Finally we are ready to compute the Breuil--Kisin prismatic cohomology of $BG$.

\begin{proposition}
\label{prismatic of BG}
The prismatic cohomology of \(BG\) is given by
\[
H^*_{\Prism}(BG/\mathfrak{S}) \simeq \mathfrak{S}[\tilde{u}]/(p \tilde{u}),
\]
where \(\tilde{u}\) has degree \(2\).
\end{proposition}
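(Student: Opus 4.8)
The strategy is to pin down each cohomology group $H^n_{\Prism}(BG/\mathfrak{S})$ separately, feeding the Breuil--Kisin structure recorded in the preceding remark into the Hodge--Tate computation of \cref{HT of BG}. The basic tool is the universal-coefficient sequence used in the proof of \cref{equivalence of SS}: since $\mathcal{O}_K=\mathfrak{S}/(E)$ has Tor-dimension one over $\mathfrak{S}$, for every $n$ there is a short exact sequence
\[
0\longrightarrow H^n_{\Prism}(BG/\mathfrak{S})\otimes_{\mathfrak{S}}\mathcal{O}_K
\longrightarrow H^n_{\mathrm{HT}}(BG/\mathcal{O}_K)
\longrightarrow \mathrm{Tor}^{\mathfrak{S}}_1\!\big(H^{n+1}_{\Prism}(BG/\mathfrak{S}),\mathcal{O}_K\big)\longrightarrow 0 ,
\]
and by \cref{HT of BG} the middle term is $\mathcal{O}_K$ for $n=0$, is isomorphic to $\mathcal{O}_K/p$ for $n$ even and positive, and vanishes for $n$ odd.

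First I would clear away the easy degrees. For odd $n$ the sequence gives $H^n_{\Prism}(BG/\mathfrak{S})/E=0$; as this module is finitely generated (by the preceding remark) and $E$ lies in the maximal ideal of the local ring $\mathfrak{S}$, Nakayama forces $H^n_{\Prism}(BG/\mathfrak{S})=0$. For $n=0$ the tautological atlas $\mathrm{Spf}(\mathcal{O}_K)\to BG$ followed by the structure map is the identity, so $\mathrm{R\Gamma}_{\Prism}(\mathrm{Spf}(\mathcal{O}_K)/\mathfrak{S})=\mathfrak{S}$ is a retract of $\mathrm{R\Gamma}_{\Prism}(BG/\mathfrak{S})$; hence $H^0_{\Prism}(BG/\mathfrak{S})=\mathfrak{S}\oplus C$ for some finitely generated $C$, and reducing modulo $E$ and comparing with $H^0_{\mathrm{HT}}(BG/\mathcal{O}_K)=\mathcal{O}_K$ (the Tor-term vanishes since $H^1_{\Prism}=0$) gives $C/E=0$, so $C=0$ and $H^0_{\Prism}(BG/\mathfrak{S})=\mathfrak{S}$.

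Now fix $k\ge 1$ and set $M:=H^{2k}_{\Prism}(BG/\mathfrak{S})$. Because $H^{2k+1}_{\Prism}=0$, the sequence collapses to $M/E\cong H^{2k}_{\mathrm{HT}}(BG/\mathcal{O}_K)\cong\mathcal{O}_K/p$; reducing once more to the residue field shows $M$ is cyclic, say $M\cong\mathfrak{S}/(h)$. Because $H^{2k-1}_{\mathrm{HT}}=0$, the sequence in degree $2k-1$ gives $\mathrm{Tor}^{\mathfrak{S}}_1(M,\mathcal{O}_K)=0$, i.e.\ $E$ is a non-zero-divisor on $\mathfrak{S}/(h)$, so $h$ is coprime to $E$ in the unique factorization domain $\mathfrak{S}$. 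The crux of the whole computation — the step I expect to be the real obstacle — is to upgrade this to $h\sim p$; here the Breuil--Kisin structure must genuinely be used, since $h$ could a priori be any ``Eisenstein-like'' element (for instance an Eisenstein polynomial different from $E$, which would give a torsion-free $M$). The Frobenius supplies an $\mathfrak{S}$-linear map $F\colon\varphi^*M=\mathfrak{S}/(\varphi(h))\to M=\mathfrak{S}/(h)$, say $F=(\,\cdot\,c)$, with $\mathrm{coker}(F)$ annihilated by a power of $E$; this amounts to the pair of divisibilities $h\mid c\,\varphi(h)$ and $E^{\,N}\in(c,h)$. For any prime $q\mid h$ one has $q\ne(E)$, so localising $E^{\,N}\in(c,h)$ at $q$ forces $q\nmid c$, whence $v_q(h)\le v_q(\varphi(h))$. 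Now introduce the additive, $\varphi$-invariant weight $N(\,\cdot\,)$ on $\mathfrak{S}$ sending $p^{a}\cdot g\cdot(\mathrm{unit})$, with $g$ distinguished, to the $p$-adic valuation of the constant term of $g$ (equivalently the total valuation of the roots of $g$): it satisfies $N(fg)=N(f)+N(g)$, $N(\varphi(f))=N(f)$, and $N(q)>0$ for every distinguished prime $q$. Then
\[
N(h)=\sum_{q\mid h}N(q)\,v_q(h)\ \le\ \sum_{q\mid h}N(q)\,v_q(\varphi(h))\ \le\ \sum_{q}N(q)\,v_q(\varphi(h))=N(\varphi(h))=N(h),
\]
so both inequalities are equalities: $\varphi(h)$ and $h$ have exactly the same distinguished prime factors with the same multiplicities, hence their distinguished parts agree up to a unit. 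Since the distinguished part of $\varphi(h)$ has $u$-degree $p$ times that of $h$, the distinguished part of $h$ must have $u$-degree $0$; that is, $h=p^{m}\cdot(\mathrm{unit})$ and $M\cong\mathfrak{S}/p^{m}$. Finally $M/E\cong\mathcal{O}_K/p^{m}\cong\mathcal{O}_K/p$ forces $m=1$, so $H^{2k}_{\Prism}(BG/\mathfrak{S})\cong\mathfrak{S}/p$ for all $k\ge 1$.

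It remains to identify the ring structure. Choose a generator $\tilde u$ of $H^2_{\Prism}(BG/\mathfrak{S})\cong\mathfrak{S}/p$. The base-change map $\mathrm{R\Gamma}_{\Prism}(BG/\mathfrak{S})\to\mathrm{R\Gamma}_{\Prism}(BG/\mathfrak{S})\otimes^{\mathbb{L}}_{\mathfrak{S}}\mathcal{O}_K=\mathrm{R\Gamma}_{\mathrm{HT}}(BG/\mathcal{O}_K)$ is a ring map, and in each even degree it is the reduction $\mathfrak{S}/p\twoheadrightarrow\mathcal{O}_K/p=H^{2k}_{\mathrm{HT}}$, hence carries generators to generators; since $H^*_{\mathrm{HT}}(BG/\mathcal{O}_K)=\mathcal{O}_K[u']/(pu')$ by \cref{HT of BG}, the image of $\tilde u^{k}$ generates $H^{2k}_{\mathrm{HT}}$, and therefore (Nakayama again) $\tilde u^{k}$ generates $H^{2k}_{\Prism}(BG/\mathfrak{S})$. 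Together with $p\tilde u=0$ and the vanishing of the odd-degree cohomology this yields the asserted isomorphism $H^*_{\Prism}(BG/\mathfrak{S})\cong\mathfrak{S}[\tilde u]/(p\tilde u)$ with $\tilde u$ in degree $2$. As an alternative to the Frobenius argument above, one could instead invoke Min's theorem (\cref{structure of prismatic}, valid verbatim for $BG$ by the preceding remark) to obtain $H^{2k}_{\Prism}\cong\mathfrak{S}/p$ in the low range $2ke<p-1$, and then propagate to all $k$ using that multiplication by $u'$ is surjective on $H^*_{\mathrm{HT}}(BG/\mathcal{O}_K)$ together with Nakayama.
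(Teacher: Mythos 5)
Your skeleton is the paper's: the Hodge--Tate specialization sequences, vanishing in odd degrees, cyclicity and \(E\)-torsion-freeness in positive even degrees, and the multiplicativity argument via completeness/Nakayama and \cref{HT of BG}. The genuine gap is in the step you yourself flag as the crux, where you claim that the two divisibilities \(h \mid c\,\varphi(h)\) and \(E^N \in (c,h)\) force \(h \sim p^m\). They do not. You have only used the ``surjectivity up to \(E\)-power'' half of the Breuil--Kisin structure and dropped the other half (that the linearized Frobenius is an isomorphism after inverting \(E\); the preceding remark supplies \(\psi\) with \emph{both} composites equal to \(E^n\)). Concretely, with \(e=2\) take \(h=u^2\) and \(c=1\): both divisibilities hold trivially, \(M=\mathfrak{S}/(u^2)\) is cyclic, \(E\) is a non-zero-divisor on it, and \(M/E=\mathfrak{S}/(u^2,E)=\mathfrak{S}/(u^2,p)\cong\mathcal{O}_K/p\) as \(\mathcal{O}_K\)-modules --- so this module passes every condition you have established up to that point, yet it is not \(\mathfrak{S}/p\). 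The weight argument fails to detect it because \(u\) is itself a distinguished prime with \(N(u)=+\infty\) (its constant term is \(0\)), so whenever \(u\mid h\) your displayed chain reads \(\infty\le\infty\le\infty\) and the ``all inequalities are equalities'' conclusion, hence the claim that the distinguished part of \(h\) has degree \(0\), does not follow.

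The repair is available from the structure you already quoted: since \(\psi\circ F = E^n\) and \(F\circ\psi=E^n\), the map \(F\) is an isomorphism after inverting \(E\); localizing at the height-one prime \((u)\) (where \(E\) is a unit, as \(E\) has nonzero constant term), the source \(\mathfrak{S}_{(u)}/(\varphi(h))\) has length \(p\,v_u(h)\) over the DVR \(\mathfrak{S}_{(u)}\) while the target has length \(v_u(h)\), forcing \(v_u(h)=0\). Once \(u\nmid h\) (and \(h\neq 0\), which \(M/E\cong\mathcal{O}_K/p\) already rules out), all weights occurring are finite and your squeeze goes through. This is precisely the point where the paper instead invokes \cite[Proposition 4.3]{BMS1} --- torsion Breuil--Kisin modules are killed by a power of \(p\) --- via an auxiliary lemma saying a cyclic torsion Breuil--Kisin module with no \(E\)-torsion is \(\mathfrak{S}/(p^n)\), and then excludes the free case and pins \(n=1\) by the Hodge--Tate specialization, exactly as you do. Your parenthetical alternative via \cref{structure of prismatic} also does not suffice on its own: for small \(p\) the range \(2ke<p-1\) can be empty, and the propagation argument only exhibits \(H^{2k}_{\Prism}\) as a cyclic quotient of \(\mathfrak{S}/p\) unless you again feed in \(E\)-torsion-freeness.
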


Before giving the proof, we need an auxiliary lemma.

\begin{lemma}
Let \(M\) be a cyclic torsion Breuil--Kisin module over \(\mathfrak{S}\) with no \(E\)-torsion,
then there is an integer \(n\) such that \(M \simeq \mathfrak{S}/(p^n)\).
\end{lemma}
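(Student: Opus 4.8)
\emph{Plan.} I would present the cyclic module as \(M=\mathfrak{S}/I\) and then cut \(I\) down to a power of \(p\) in three stages: first use only the torsion and \(E\)-torsion hypotheses to see that \(I\) is principal; then split off the \(p\)-part of a generator; and finally use the Breuil--Kisin Frobenius together with the fact that \(\mathfrak{S}=\mathrm{W}(\kappa)[\![u]\!]\) is a \(2\)-dimensional regular (hence factorial) local ring to kill the remaining factor.

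\emph{Step 1 (reduce to a principal ideal).} Write \(M=\mathfrak{S}/I\); we may assume \(M\neq 0\). Since \(M\) is \(\mathfrak{S}\)-torsion, the generic point of \(\mathrm{Spec}\,\mathfrak{S}\) is not associated to \(M\); and since the unique height-\(2\) prime \(\mathfrak m=(p,u)\) contains \(E\) (it is Eisenstein, so \(E\equiv 0\bmod\mathfrak m\)) while \(M\) has no \(E\)-torsion, \(\mathfrak m\notin\mathrm{Ass}(M)\) either. Hence every associated prime of \(M\) is a height-\(1\) prime \((\pi_i)\). A primary decomposition of \(I\) therefore has only height-\(1\) components, each of which is a prime power \((\pi_i^{k_i})\) because \(\mathfrak{S}_{(\pi_i)}\) is a DVR; so \(I=(f)\) with \(f=\prod_i\pi_i^{k_i}\) is principal, and the \(E\)-torsion-freeness forces \(E\nmid f\). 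Writing \(f=p^{a}g\) with \(a=v_p(f)\) and \(p\nmid g\) (so also \(E\nmid g\)), one computes \(M[p^\infty]\cong\mathfrak{S}/(p^{a})\) and \(M/M[p^\infty]\cong\mathfrak{S}/(g)\).

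\emph{Step 2 (bring in the Frobenius).} From the Breuil--Kisin isomorphism \(\phi^{*}M[1/E]\xrightarrow{\sim}M[1/E]\) and the identity \(\phi^{*}(\mathfrak{S}/(f))=\mathfrak{S}/(\phi(f))\), comparing annihilators gives \((\phi(f))=(f)\) inside \(\mathfrak{S}[1/E]\); cancelling \(p^{a}=\phi(p^{a})\) yields \(\phi(g)=E^{j}wg\) in \(\mathfrak{S}\) for some \(j\ge 0\) and a unit \(w\). Comparing multiplicities along the prime \((u)\) (where \(u\nmid E\), \(\phi^{-1}((u))=(u)\), and \(v_{(u)}(\phi(x))=p\,v_{(u)}(x)\)) forces \(u\nmid g\). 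Next, comparing prime factorizations in the UFD \(\mathfrak{S}\) shows that the finite set \(S\) of primes dividing \(g\) is stable under the Frobenius on the scheme, i.e.\ under the map \((\pi)\mapsto\phi^{-1}((\pi))\) on height-\(1\) primes: each \(\pi\in S\) divides \(\phi(g)\), hence divides \(\phi(\sigma)\) for some \(\sigma\in S\), and \(\pi\mid\phi(\sigma)\) happens exactly when \(\phi^{-1}((\pi))=(\sigma)\).

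\emph{Step 3 (Frobenius dynamics — the crux).} If \(S\neq\varnothing\), then finiteness together with \(\mathrm{Spec}(\phi)\)-stability produces a periodic prime \((\pi)\): \(\pi\mid\phi^{d}(\pi)\) in \(\mathfrak{S}\) for some \(d\ge 1\), and \(\pi\notin\{p,u,E\}\) since \(\pi\in S\). Up to a unit \(\pi\) is a distinguished polynomial which, being prime, is irreducible over \(\mathrm{W}(\kappa)[1/p]\); its constant term lies in \(p\mathrm{W}(\kappa)\setminus\{0\}\), so all roots of \(\pi\) in \(\overline{\mathbb{Q}}_p\) have valuation in \((0,\infty)\). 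On the other hand \(\phi^{d}(\pi)\) is obtained from \(\pi\) by applying the \(d\)-th power of the Witt Frobenius to the coefficients and substituting \(u\mapsto u^{p^{d}}\), so its roots are the \(p^{d}\)-th roots of Frobenius-conjugates of the roots of \(\pi\); hence their valuations are those of \(\pi\) divided by \(p^{d}\). The divisibility \(\pi\mid\phi^{d}(\pi)\) would then force the largest root-valuation of \(\pi\) to equal itself divided by \(p^{d}\) — impossible for \(d\ge 1\). Thus \(S=\varnothing\), \(g\) is a unit, and \(M=\mathfrak{S}/(f)=\mathfrak{S}/(p^{a})\), so \(n=a\) works. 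The commutative algebra in Step 1 and the bookkeeping in Step 2 are routine; the step I expect to take the most care is Step 3 — extracting from the bare relation \(\phi(g)=E^{j}wg\) the fact that the Frobenius strictly contracts root-valuations, so that no prime divisor of \(g\) can be carried around its own \(\mathrm{Spec}(\phi)\)-orbit.
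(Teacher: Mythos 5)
Your argument is correct, but it takes a genuinely different route from the paper's. The paper uses the Frobenius structure only through the citation of \cite[Proposition 4.3]{BMS1}: since \(M[1/p]\) is finite free over \(\mathfrak{S}[1/p]\) for any Breuil--Kisin module, torsion forces \(M[1/p]=0\), so \(M\) is killed by a power of \(p\); the no-\(E\)-torsion hypothesis is then used in the form ``\(M\) contains no nonzero finite \(\mathfrak{S}\)-submodule'' (such a submodule would have a socle element killed by \(\mathfrak{m}\ni E\)), and a short argument with the map \(\mathfrak{S}/(f,p)\xrightarrow{\cdot p^{n-1}} M\) then pins down \(I=(p^n)\). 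You instead work directly with the annihilator ideal: no-\(E\)-torsion enters via \(\mathrm{Ass}(M)\), giving \(I=(f)\) principal with \(f=p^a g\), and the Frobenius enters through the relation \(\phi(g)=E^j w g\), which you exploit by a Frobenius-dynamics/Newton-polygon contraction argument to force \(g\) to be a unit. In effect you reprove by hand the rank-zero case of the quoted BMS1 proposition, so your proof is self-contained where the paper's is short by invoking a black box; the cost is a longer list of standard but nontrivial inputs (uniqueness of primary components at minimal primes and symbolic powers of principal primes; finite flatness of \(\phi\), so that contraction of height-one primes is height-one and your map \((\pi)\mapsto\phi^{-1}((\pi))\) is well defined; Weierstrass preparation; and the fact that the Witt Frobenius extends to a valuation-preserving automorphism of \(\overline{K_0}\), being a \(\mathbb{Q}_p\)-automorphism). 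If you write it up, make two small points explicit: \(j\ge 0\) holds because \(j<0\) would force \(E\mid f\); and the divisibility of the distinguished polynomial underlying \(\pi\) into \(\phi^d(\pi)\) passes from \(\mathfrak{S}\) to \(\mathrm{W}(\kappa)[u]\) by uniqueness of Weierstrass division, which is what legitimizes the comparison of root valuations (where the conclusion is that the maximal root valuation is at most itself divided by \(p^d\), still a contradiction).
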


\begin{proof}
Say \(M = \mathfrak{S}/I\).
First we know that \(M[1/p] = 0\), see~\cite[Proposition 4.3]{BMS1}.
Since \(M\) has no \(E\)-torsion, we see that \(M\) contains no nonzero finite \(\mathfrak{S}\)-submodule.
Let \(n\) be the smallest integer such that \(p^n \in I\).
It suffices to show that for any non-unit \(f \in \mathfrak{S}-(p)\), the smallest integer \(m\) such that
\(p^m f \in I\) is \(n\).
Suppose otherwise, then we have \(m < n\).
Then the image of
\[
\mathfrak{S}/(f, p) \xrightarrow{\cdot p^{n-1}} M 
\]
is a nonzero (as \(p^{n-1} \not\in I\)) finite (as the image of \(f\) in \(\mathfrak{S}/(p) = k[\![u]\!]\)
is nonzero and non-unit) submodule, which we have argued is impossible.
Therefore we must have \(n = m\).
\end{proof}

\begin{proof}[Proof of~\cref{prismatic of BG}]
The Hodge--Tate specialization gives us short exact sequences:
\[
0 \to H^*_{\Prism}(BG/\mathfrak{S})/(E) \to H^*_{\mathrm{HT}}(BG/\mathcal{O}_K) \to
H^{*+1}_{\Prism}(BG/\mathfrak{S})[E] \to 0,
\]
where \(M[E]\) denotes the \(E\)-torsion of an \(\mathfrak{S}\)-module \(M\).

We make the following claim:
\begin{enumerate}
    \item the odd degree prismatic cohomology groups of \(BG\) are zero; and
    \item the positive even degree prismatic cohomology groups of \(BG\) are cyclic
    and \(E\)-torsion free.
\end{enumerate}
Indeed by our~\cref{HT of BG}, we see that $H^{odd}_{\Prism}(BG/\mathfrak{S})/(E) = 0$.
Since $H^i_{\Prism}(BG/\mathfrak{S})$ is $E$-complete, this gives (1) above.
Using the above short exact sequence and vanishing of odd degree Hodge--Tate cohomology
established in \cref{HT of BG}, we get that the positive even degree
prismatic cohomology groups of \(BG\) are \(E\)-torsion free.
Then we use \cref{HT of BG} again, to see that for any $i > 0$ we get an isomorphism
$H^{2i}_{\Prism}(BG/\mathfrak{S})/(E) \cong \mathfrak{S}/p \cdot \tilde{u}^i$.
Therefore for each $i$ we can find a map $\mathfrak{S} \to H^{2i}_{\Prism}(BG/\mathfrak{S})$,
which is surjective after modulo $E$.
Since $\mathfrak{S}$ itself is derived $(p,E)$-complete, the cokernel of this map
is both $E$-adically complete and vanishes after modulo $E$.
These two together imply that the cokernel vanishes, in other words, the chosen map
$\mathfrak{S} \to H^{2i}_{\Prism}(BG/\mathfrak{S})$ is surjective,
which shows (2) above.

For any $m$ we have that \(H^{2m}_{\Prism}(BG/\mathfrak{S})\) is cyclic and \(E\)-torsion free,
hence it is either free or isomorphic to \(\mathfrak{S}/(p^n)\) for some \(n\)
(by the above Lemma).
To see that we must be in the latter case with \(n\) being \(1\), 
we use the fact that it is so under the Hodge--Tate specialization.
Powers of any generator in \(H^{2}_{\Prism}(BG/\mathfrak{S})\) 
are generators of the corresponding
prismatic cohomology group, as it is so after the Hodge--Tate specialization
(using again $E$-adically completeness of these prismatic cohomology groups).

\end{proof}

\begin{remark}
We do not know how Frobenius acts on the prismatic cohomology groups.
Since the geometric generic fiber of \(BG\) is \(B \mathbb{Z}/p\), we
at least know that the Frobenius is not identically zero, by \'{e}tale specialization
of the prismatic cohomology~\cite[Theorem 1.8.(4)]{BS19}.
On the other hand, since Frobenius is zero for \(B\alpha_p\),
we know that Frobenius also cannot be surjective on \(H^2_{\Prism}(BG/\mathfrak{S})\).
Since \(((\mathbb{F}_p[\![u]\!])^*)^{p-1} = (1 + (u), \times)\),
we see that after choosing an appropriate generator, the Frobenius on
\(H^2_{\Prism}(BG/\mathfrak{S}) \simeq \mathfrak{S}/(p) \cong \mathbb{F}_p[\![u]\!]\)
sends \(1\) to \( \gamma \cdot u^d\), where \(\gamma \in \mathbb{F}_p^*\) and 
\(d\) is a positive integer.
It would be interesting to understand the relation between our 
choice\footnote{Recall that we need to make such a choice in order to lift \(\alpha_p\).} 
of \(\pi\) and the values \(\gamma\) and \(d\).
\end{remark}

\subsection{Approximating \(BG\)}
In this last subsection, let us show that the pathologies of \(BG\)
are inherited by approximations of \(BG\), so that in the end we can get some scheme examples.
For this purpose, it suffices to follow~\cite[Section 6]{ABM19}.

\begin{proposition}[{See also~\cite[Theorem 1.2]{ABM19}}]
For any integer \(d \geq 0\), there exists a smooth projective
\(\mathcal{O}_K\)-scheme \(\mathcal{X}\) of dimension \(d\) together with a map 
\(\mathcal{X} \to BG\) such that the pullback 
\(H^i(BG, \wedge^j L_{BG/\mathcal{O}_K}) \to 
H^i(\mathcal{X}, \wedge^j L_{\mathcal{X}/\mathcal{O}_K})\)
is injective for \(i + j \leq d\).
\end{proposition}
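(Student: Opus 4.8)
The plan is to follow the strategy of Antieau--Bhatt--Mathew~\cite[Section 6]{ABM19} essentially verbatim, since the argument there is purely formal once one has control of the Hodge cohomology of the classifying stack. The input we need is~\cref{hodge cohomology of BG}: each group $H^i(BG, \wedge^j L_{BG/\mathcal{O}_K})$ is a finitely generated $\mathcal{O}_K$-module, it is nonzero only in a bounded range for fixed total degree, and $BG$ is a smooth proper (quasi-compact, separated) stack over $\mathcal{O}_K$ with special fiber $B\alpha_p$.

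First I would reduce to producing, for each fixed $d$, a smooth projective $\mathcal{O}_K$-scheme $\mathcal{X}$ of dimension $d$ with a map $\mathcal{X}\to BG$ inducing injections on Hodge cohomology in the range $i+j\le d$. The mechanism is the standard one: take a presentation of $BG$ by a smooth affine groupoid, embed a chosen affine chart into projective space, and form an iterated sequence of complete intersections / projective bundles over it cut out so as to be smooth of the right dimension while remaining highly connected relative to $BG$. Concretely, one builds $\mathcal{X}$ as a smooth projective approximation of the total space of a vector bundle on $BG$ (or of a suitable affine chart), exactly as in~\cite[Section 6]{ABM19}; the construction works over any Noetherian base, in particular over $\mathcal{O}_K$, with no change.

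Next I would verify the injectivity on Hodge cohomology. Here the key point is a relative Kodaira--Akizuki--Nakano / Lefschetz-type vanishing: if $f\colon \mathcal{X}\to BG$ is built from enough ample hypersurface sections and projective bundles, then $Rf_*\wedge^j L_{\mathcal{X}/\mathcal{O}_K}$ agrees with $\wedge^j L_{BG/\mathcal{O}_K}$ in degrees $\le d-j$ up to terms that are acyclic in that range, so that $H^i(BG,\wedge^j L_{BG/\mathcal{O}_K})\to H^i(\mathcal{X},\wedge^j L_{\mathcal{X}/\mathcal{O}_K})$ is injective (indeed an isomorphism) for $i+j\le d$. Because everything in sight is a finitely generated $\mathcal{O}_K$-module and the relevant vanishing statements are insensitive to the non-reducedness of the special fiber, one can either argue integrally or check the statement after base change to $K$ and to $\kappa$ separately and then conclude by a length/Nakayama argument as in~\cref{filtered module lemma}. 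The cotangent complex computation $L_{BG/\mathcal{O}_K}\simeq \mathcal{O}/(\pi)[-1]$ from the earlier lemma makes the wedge powers explicit, so that all the bookkeeping of which bidegrees one must kill is completely controlled.

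The main obstacle is the connectivity estimate: one must choose the approximating hypersurfaces (their degrees and how many of them) carefully enough that the comparison map is an isomorphism throughout the range $i+j\le d$ while keeping $\mathcal{X}$ smooth of dimension exactly $d$. This is precisely the content of~\cite[Section 6]{ABM19}, and I would simply cite or transcribe that argument, noting that it is geometric and takes place over an arbitrary base, hence applies over $\mathcal{O}_K$ without modification; the only mild additional care needed is that $BG$ here lives over a ramified base rather than over $\mathbb{F}_p$, but since smoothness and properness of $BG$ over $\mathcal{O}_K$ were already recorded (using~\cite[\href{https://stacks.math.columbia.edu/tag/0DLS}{Tag 0DLS}]{stacks-project}) this causes no trouble.
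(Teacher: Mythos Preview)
Your construction of $\mathcal{X}$ via the Godeaux--Serre method is on target and matches the paper. The divergence is in how you verify injectivity on Hodge cohomology, and here your proposal is both more complicated than necessary and somewhat vague. You invoke a Lefschetz/Kodaira--Akizuki--Nakano-type vanishing for $Rf_*\wedge^j L_{\mathcal{X}/\mathcal{O}_K}$ without formulating it precisely, and the alternative of ``check over $K$ and over $\kappa$ separately, then use a length/Nakayama argument'' does not work as stated: for $i+j>0$ the groups $H^i(BG,\wedge^j L_{BG/\mathcal{O}_K})$ are $\pi$-torsion, so base change to $K$ is vacuous, and there is no length comparison to run.

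The paper's argument is much shorter and uses a fact you already have available. By~\cref{hodge cohomology of BG}, the specialization map
\[
sp\colon H^i(BG,\wedge^j L_{BG/\mathcal{O}_K}) \hookrightarrow H^i(B\alpha_p,\wedge^j L_{B\alpha_p/\mathbb{F}_p})
\]
is injective for $i+j>0$ (precisely because those groups are $\pi$-torsion). Over $\mathbb{F}_p$, the map $H^i(B\alpha_p,\wedge^j L_{B\alpha_p/\mathbb{F}_p}) \to H^i(\mathcal{X}_0,\Omega^j_{\mathcal{X}_0/\mathbb{F}_p})$ is injective for $i+j\le d$ by~\cite[Section 6]{ABM19}. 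The composite of these two injections factors through $H^i(BG,\wedge^j L_{BG/\mathcal{O}_K}) \to H^i(\mathcal{X},\Omega^j_{\mathcal{X}/\mathcal{O}_K})$, which is therefore injective. No integral vanishing statement or Nakayama bookkeeping is needed; the entire integral injectivity is inherited from the characteristic~$p$ statement via the specialization trick.
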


\begin{proof}
We simply follow the first paragraph of~\cite[Section 6, proof of Theorem 1.2]{ABM19}.
By standard argument (see e.g.~\cite[2.7-2.9]{BMS1}), 
we can find an integral representation \(V\) of \(G\) 
and a \(d\)-dimensional complete intersection \(\mathcal{Y} \subset \mathbb{P}(V)\)
such that \(\mathcal{Y}\) is stable under the \(G\)-action, the action is free,
and \(\mathcal{X} \coloneqq \mathcal{Y}/G \simeq [\mathcal{Y}/G]\) 
is smooth and projective over \(\mathcal{O}_K\) together with a map \(\mathcal{X} \to BG\).
We see that the special fiber of this map induces injections on the corresponding
Hodge cohomology groups.
Now we observe that the composite map
\[
H^i(BG, \wedge^j L_{BG/\mathcal{O}_K}) \to H^i(B\alpha_p, \wedge^j L_{B\alpha_p/\mathbb{F}_p})
\to H^i(\mathcal{X}_0, \Omega^j_{\mathcal{X}_0/\mathbb{F}_p})
\]
is injective when \(i+j \leq d\) (as it is composite of two injective maps) and factors through
\(H^i(BG, \wedge^j L_{BG/\mathcal{O}_K}) \to 
H^i(\mathcal{X}, \Omega^j_{\mathcal{X}/\mathcal{O}_K})\).
Hence the latter map must also be injective when \(i + j \leq d\).
\end{proof}

By choosing \(d = 4\) and using~\cref{HdR of BG} and~\cref{HT of BG}, 
we arrive at the following theorem.

\begin{theorem}
\label{main example}
There exists a smooth projective relative \(4\)-fold \(\mathcal{X}\)
over a ramified degree two extension \(\mathcal{O}_K\) of \(\mathbb{Z}_p\), such that
both of its Hodge--de Rham and Hodge--Tate spectral sequences are non-degenerate.
Moreover the Hodge/conjugate filtrations are non-split as \(\mathcal{O}_K\)-modules.
\end{theorem}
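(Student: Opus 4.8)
The plan is to realize $\mathcal{X}$ as a Godeaux--Serre type approximation of the classifying stack $BG$ and to transport the pathologies of $BG$ recorded in \cref{HdR of BG} and \cref{HT of BG} across the approximation map. Concretely, I would invoke the approximation Proposition above with $d=4$: it produces a smooth projective $\mathcal{O}_K$-scheme $\mathcal{X}$ of dimension $4$ together with a map $f\colon\mathcal{X}\to BG$ whose pullback $f^*\colon H^i(BG,\wedge^j L_{BG/\mathcal{O}_K})\to H^i(\mathcal{X},\Omega^j_{\mathcal{X}/\mathcal{O}_K})$ is injective for $i+j\le 4$. Since $f$ induces maps of the Hodge--de Rham and Hodge--Tate spectral sequences, compatibly with cup products and with the Hodge/conjugate filtrations, all the structural input of \cref{hodge cohomology of BG}, \cref{HdR of BG} and \cref{HT of BG} can be pushed forward along $f^*$.

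For non-degeneracy I would argue as follows. By \cref{HdR of BG} one has $d_1(\tau)=u^2$ up to a unit, where (in the notation of \cref{hodge cohomology of BG}) $\tau\in H^2(BG,L_{BG/\mathcal{O}_K})$ lies in total degree $3$ and $u^2\in H^2(BG,\wedge^2 L_{BG/\mathcal{O}_K})$ lies in total degree $4$. As $3,4\le 4$, the classes $f^*\tau$ and $f^*(u^2)$ are nonzero, so functoriality of the Hodge--de Rham spectral sequence gives $d_1(f^*\tau)=f^*(u^2)\ne 0$; hence the Hodge--de Rham spectral sequence of $\mathcal{X}$ carries a nonzero $d_1$ with source in total degree $3$, and in particular is non-degenerate. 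The same argument applied to \cref{HT of BG}, using $d_2(\tau)=\beta^2$ with $\beta^2\in H^4(BG,\mathcal{O}_{BG})$ in total degree $4$, shows the Hodge--Tate spectral sequence of $\mathcal{X}$ carries a nonzero $d_2$ and is non-degenerate. This is exactly where the choice $d=4$ is forced: one needs $f^*$ injective through total degree $4$, hence a $4$-fold.

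For the non-split assertions I would transport the degree-$2$ picture. The statements and computations behind \cref{HdR of BG} and \cref{HT of BG} give $H^2_{\mathrm{dR}}(BG/\mathcal{O}_K)\cong\mathcal{O}_K/p$ with Hodge filtration $0\subsetneq\mathbb{F}_p\cdot u\subsetneq\mathcal{O}_K/p$, and $H^2_{\mathrm{HT}}(BG/\mathcal{O}_K)\cong\mathcal{O}_K/p$ with conjugate filtration $0\subsetneq\mathbb{F}_p\cdot\beta\subsetneq\mathcal{O}_K/p$; both filtrations are non-split since $\mathcal{O}_K/p\cong\mathcal{O}_K/\pi^2$ is indecomposable while its associated graded is $\mathbb{F}_p\oplus\mathbb{F}_p$. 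To inherit this for $\mathcal{X}$ it suffices to know that $f^*$ identifies the \emph{torsion subgroups} of $H^2_{\mathrm{dR}}(-/\mathcal{O}_K)$ and of $H^2_{\mathrm{HT}}(-/\mathcal{O}_K)$ together with their induced filtrations, because then $H^2_{\mathrm{dR}}(\mathcal{X}/\mathcal{O}_K)_{\mathrm{tor}}\cong\mathcal{O}_K/p$ with non-split Hodge filtration, whence the Hodge filtration on all of $H^2_{\mathrm{dR}}(\mathcal{X}/\mathcal{O}_K)$ is non-split as an $\mathcal{O}_K$-module (and symmetrically on the Hodge--Tate side). This identification is a feature of the Godeaux--Serre construction of \cite[Section 6]{ABM19}: $f$ comes from a free $G$-action on a complete intersection $\mathcal{Y}\subset\mathbb{P}(V)$, and the resulting splitting of $R\Gamma(\mathcal{X},-)$ off $R\Gamma(BG,-)$ — valid for Hodge, de Rham and Hodge--Tate cohomology — has complementary summand $\mathcal{O}_K$-torsion-free in the range of degrees in play, so that the torsion (with its filtration) in the low-degree cohomology of $\mathcal{X}$ is exactly that of $BG$.

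The main obstacle is precisely this last step. The approximation Proposition as stated only controls the \emph{Hodge} cohomology of $\mathcal{X}$, while the non-split claims concern its integral \emph{de Rham and Hodge--Tate} cohomology in degree $2$; bridging the gap requires running the argument of \cite[Section 6]{ABM19} over the base $\mathcal{O}_K$ rather than over $\mathbb{F}_p$ — harmless, since the geometric input is insensitive to the base — so as to record the analogous splittings for de Rham and Hodge--Tate cohomology. Alternatively one could proceed more directly: the only low-degree differential that could a priori disturb the torsion in degree $2$ is $d_2\colon H^0(\mathcal{X},\Omega^1_{\mathcal{X}/\mathcal{O}_K})\to H^2(\mathcal{X},\mathcal{O}_{\mathcal{X}})$ in the Hodge--Tate spectral sequence (and its Hodge--de Rham counterpart), and one would rule out its hitting $f^*\beta$ using $H^0(\mathcal{X}_K,\Omega^1)=0$ on the finite étale quotient $\mathcal{X}_K=\mathcal{Y}_K/G_K$ together with a $\pi$-power-torsion count. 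By contrast, the non-degeneracy half of the theorem needs nothing beyond the stated Proposition and the two computations \cref{HdR of BG} and \cref{HT of BG}.
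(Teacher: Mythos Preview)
Your approach is exactly the paper's: apply the approximation Proposition with $d=4$ and transport the nonzero differentials $d_1(\tau)=u^2$ and $d_2(\tau)=\beta^2$ from $BG$ to $\mathcal{X}$ via functoriality of the spectral sequences together with injectivity of $f^*$ on the starting page through total degree $4$. The paper's proof is literally the one line ``By choosing $d=4$ and using \cref{HdR of BG} and \cref{HT of BG}, we arrive at the following theorem,'' so your write-up is already more detailed than what the paper records.

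You are also right to flag the non-split filtration claim as the point needing care: injectivity of $f^*$ on Hodge cohomology alone does not obviously control the filtrations on $H^2_{\mathrm{dR}}(\mathcal{X}/\mathcal{O}_K)$ and $H^2_{\mathrm{HT}}(\mathcal{X}/\mathcal{O}_K)$, since $f^*(u)$ or $f^*(\beta)$ could in principle be boundaries in the spectral sequence for $\mathcal{X}$. The paper does not spell out this step either. Your proposed fix---running the \cite[Section 6]{ABM19} argument over $\mathcal{O}_K$ so that the complete-intersection complement is torsion-free in low degrees and hence the degree-$2$ torsion of $\mathcal{X}$, with its filtration, coincides with that of $BG$---is the natural and correct way to make the claim precise; the alternative direct route you sketch (vanishing of $H^0(\mathcal{X}_K,\Omega^1)$ and $H^1(\mathcal{X}_K,\mathcal{O})$ via Lefschetz for the complete intersection $\mathcal{Y}_K$ plus finite \'etale descent) also works but requires tracking a few more differentials than you indicate.
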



\end{document}